\newtheorem{corollary}{Corollary}[section]
\newtheorem{lemma}[corollary]{Lemma}
\newtheorem{proposition}[corollary]{Proposition}
\newtheorem{remark}[corollary]{Remark}
\newtheorem{theorem}[corollary]{Theorem}
\newfont{\sBlackboard}{msbm10 scaled 900}
\newcommand{\mylabel}[1]{\label{#1}
            \ifx\undefined\stillediting
            \else \fbox{$#1$}\fi }
\newcommand{\BE}{\begin{equation}}
\newcommand{\EEQ}{\end{equation}}
\newcommand{\rfb}[1]{\mbox{\rm
   (\ref{#1})}\ifx\undefined\stillediting\else:\fbox{$#1$}\fi}
\newfont{\Blackboard}{msbm10 scaled 1200}
\newfont{\roma}{cmr10 scaled 1200}
\def\CC{\rm \hbox{C\kern-.56em\raise.4ex
         \hbox{$\scriptscriptstyle |$}\kern+0.5 em }}
\def\n{|\kern -.05cm{|}\kern -.05cm{|}}
\def \noame{\noalign{\medskip}}
\newcommand{\mm}    {{\hbox{\hskip 0.5pt}}}
\newcommand{\bluff} {{\hbox{\raise 15pt \hbox{\mm}}}}
\newcommand{\ep}   {\varepsilon}
\def\section{\@startsection {section}{1}{\z@}{-3.5ex plus -1ex minus
    -.2ex}{2.3ex plus .2ex}{\large\bf}}
\def\be{\begin{equation}}
\def\ee{\end{equation}}
\date{ }
\begin{document}
\thispagestyle{empty}
\title{\Large \bf  Roughness-induced effects on the thermomicropolar fluid flow through a thin domain}\maketitle
\vspace{-2cm}
\begin{center}
Igor PA${\rm \check{Z}}$ANIN\footnote{Department of Mathematics, Faculty of Science, University of Zagreb, Bijeni${\rm \check{c}}$ka 30, 10000 Zagreb (Croatia) pazanin@math.hr} and Francisco Javier SU\'AREZ-GRAU\footnote{Departamento de Ecuaciones Diferenciales y An\'alisis Num\'erico. Facultad de Matem\'aticas. Universidad de Sevilla. 41012-Sevilla (Spain) fjsgrau@us.es}
 \end{center}

 \renewcommand{\abstractname} {\bf Abstract}
\begin{abstract}
In this paper, we study the asymptotic behavior of the thermomicropolar fluid flow through a thin channel with rough boundary. The flow is governed by the prescribed pressure drop between the channel's ends and the heat exchange through the rough wall is allowed. Depending on the limit of the ratio between channel's thickness and the wavelength of the roughness, we rigorously derive different asymptotic models clearly showing the roughness-induced effects on the average velocity and microrotation. To accomplish that, we employ the adaptation of the unfolding method to a thin-domain setting.
\end{abstract}
\bigskip\noindent

\noindent {\small \bf AMS classification numbers:}  35B27, 35Q35, 76A05, 76M50.  \\

\noindent {\small \bf Keywords:} thermomicropolar fluid; thin domain; rough boundary; cooling condition; homogenization.
\ \\
\ \\
\section {Introduction}\label{S1}

The model of micropolar fluid, proposed by Eringen \cite{eringen} has been extensively studied both in the engineering and mathematical literature, due to its practical importance. Being able to take into consideration the microstructure of the fluid particles and capture the effects of its rotation, the micropolar fluid model describes the motion of numerous real fluids better than the classical Navier-Stokes equations. Liquid crystals, animal blood, muddy fluids, certain polymeric fluids or even water in models with small scales are the typical examples. The rotation of the fluid particles is mathematically described by introducing the microrotation field (along with the standard velocity and pressure fields) and, accordingly, a new governing equation coming from the conservation of angular momentum. The model of thermomicropolar fluid, introduced also by Eringen \cite{eringenII}, represents an essential ge\-neralization of the micropolar fluid model acknowledging the variations of the fluid temperature as well. In such, non-isothermal, regime, the micropolar equations are being coupled with the heat conduction equation leading to a very complex system of PDEs. In particular, the 2D system describing the steady-state flow of incompressible, isotropic, thermomicropolar fluid flow between two horizontal plates in dimensionless form reads as follows (see e.g.~\cite{eringenIII}, \cite{Luka1},\cite{Tarasinka}):
\begin{equation}\label{system_1intro}
\left\{\begin{array}{rl}
\displaystyle {1\over Pr}(({\bf u} \cdot\nabla){\bf u} +\nabla p )=\Delta {\bf u} + {N\over 1-N}(2\nabla^\perp w +\Delta{\bf u} )+Ra\,T {\bf e}_2+{\bf f},  &\\
\\
{\rm div}( {\bf u})=0,& \\
\\
\displaystyle {M\over Pr}({\bf u} \cdot \nabla w )=L\Delta w +{2N\over 1-N}({\rm rot}({\bf u} )-2w )+g ,& \\
\\
{\bf u} \cdot \nabla T =\Delta T +D\nabla^\perp w \cdot \nabla T.&
\end{array}\right.
\end{equation}

\noindent
In the above system, the velocity vector field is denoted by ${\bf u}$, the pressure by $p$, $w$ represents the microrotation and $T$ is the temperature of the fluid. The external sources of linear and angular momentum are given by the functions ${\bf f}=(f_1,f_2)$ and $g$, respectively. We denote by ${\bf e}_ 2= (0, 1)\in \mathbb{R}^2$ the unit upward vector, whereas the positive constants appearing in (\ref{system_1intro}) represent the following (see e.g.~\cite{Luka}):
\begin{itemize}
\item $N$ is the coupling parameter, i.e. the relation between the Newtonian and microrotation viscosities,
\item $M$ is the relation between the moment of inertia and geometry,
\item $L$ is the couple stress parameter, i.e. the relation between the geometry and the properties of the fluid,
\item $D$ is the micropolar heat conduction parameter, i.e. the relation between the micropolar thermal conduction and the geometry,
\item $Pr$ is the Prandtl number, i.e. the relation between the kinematic viscosity and the thermal diffusivity,
\item $Ra$ is the Rayleigh number, i.e. the relation between the coefficients of thermal expansion and conductivity and the geometry.
\end{itemize}
\noindent
Throughout the mathematical literature, one can find many papers on the rigorous derivation of the asymptotic models describing the isothermal flow of a micropolar fluid, see e.g.~\cite{benpazrad}, \cite{Bonn_Paz_SG2}, \cite{Bonn_Paz_SG}, \cite{duppanstavI}, \cite{duppanstavII}, \cite{pazII}, \cite{Pazanin_SG}. Although there have been a number of recent papers concerning engineering applications of the thermomicropolar fluid model (see e.g.~\cite{cheng}, \cite{hosmanroyhos}, \cite{hosmanroyhosII}, \cite{rahelt}), the rigorous treatments for such models are very sparse.  Most recently, the system (\ref{system_1intro}) has been studied in \cite{Pazanin_thermomicropolar} for the thermicropolar flow through a thin channel with smooth walls, namely:
$$\Omega^\ep=\left\{(x_1,x_2)\in\mathbb{R}^2\,:\, x_1\in\omega,\ 0<x_2<\varepsilon\right\},\,\,\,\,\omega=(-1/2,1/2).$$
The flow is assumed to be governed by the prescribed
pressure drop between channel's ends, given by $q_{-1/2}$ and $q_{1/2}$, and the heat exchange between the fluid inside the channel and the exterior medium is allowed through the upper wall by using Newton's cooling law. Using the asymptotic analysis with respect to the thickness of the channel, a higher-order
asymptotic solution has been rigorously derived. In particular, assuming that $f_1$ and $g$ only depends on the horizontal variable and after a dilatation in the vertical variable, it is proved that the average velocity ${\bf U}^{av}=(U_1^{av},U_2^{av})$ and the microrotation $W^{av}$ at the main-order term are respectively given by:
\begin{equation}\label{av_pazanin_intro}
\begin{array}{l}
\displaystyle U^{av}_1={1\over 12}{1-N\over Pr}\left(q_{-1/2}-q_{1/2}+Pr\int_{-1/2}^{1/2}f_1(\xi)\,d\xi\right),\quad U^{av}_2=0,\quad   W^{av}={1\over 12 }{1\over L} g(x_1),\quad\hbox{in }\omega.
\end{array}
\end{equation}
Moreover, the explicit expressions for the pressure approximation is obtained and for the average of the temperature as well, acknowledging the effects of fluid's microstructure through the presence of the couple stress parameter $L$  and the micropolar heat conduction parameter $D$.\\
\ \\
\noindent
In great majority of the applications, the domain boundaries are not perfectly smooth, i.e. they contain some irregularities. Thus, in the present paper, we aim to generalize the results from \cite{Pazanin_thermomicropolar} to a case of a thin channel with an irregular upper wall described by
$$x_2=\eta_\ep h\left({x_1\over \ep}\right),$$
where $\eta_\ep$ is the thickness of the roughness, $\ep$ is the period of the roughness and $h$ is a positive and periodic function (see Section \ref{sec:setting}).  This kind of thin rough domain has been extensively studied for the isothermal flows, see \cite{Bayada_Chambat}, \cite{Mikelic2}  for the classical Newtonian fluid flow,  \cite{Anguiano_SG} for the flow of the generalized Newtonian fluid and \cite{grauMicRough} for the micropolar fluid flow. In these papers, a critical size has been found between the thickness of the domain $\eta_\ep$ and the period of the roughness $\ep$, which is given by
$$\lambda=\lim_{\ep\to 0}{\eta_\ep\over \ep}\in [0,+\infty].$$
The critical case, $\lambda\in (0,+\infty)$, corresponds to the case in which the thickness and period of the roughness are proportional. The subcritical case, $\lambda=0$, corresponds to a very smooth roughness, and the supercritical case, $\lambda=+\infty$, corresponds to the case of a highly oscillating boundary. \\

\noindent
As far as the authors know, the flow of a thermomicropolar fluids has not been yet considered in the above described setting. The supercritical case, due to the highly oscillating boundary, leads to the conclusion that the velocity and microrotation are zero in the roughness zone (see e.g.~\cite{grauMicRough}) so, in the sequel, we study the asymptotic behavior of the solution in the critical and the subcritical case. By applying reduction of dimension techniques together with an adaptation of the unfolding method (see Section \ref{sec:estimates}) to capture the microgeometry of the roughness, depending on the relation of $\ep$ and $\eta_\ep$, we rigorously derive the following expressions for the average velocity and microrotation:
\begin{equation}\label{av_paper_intro}
\begin{array}{l}
\displaystyle U^{av}_1=a_\lambda{1-N\over Pr}\left(q_{-1/2}-q_{1/2}+Pr\int_{-1/2}^{1/2}f_1(\xi)\,d\xi\right),\quad U^{av}_2=0,\quad   W^{av}=b_\lambda{1\over L} g(x_1),\quad\hbox{in }\omega.
\end{array}
\end{equation}
where $a_\lambda, b_\lambda\in \mathbb{R}^+$ are obtained through local problems depending on the value of $\lambda\in[0,+\infty)$ and give the roughness-induced effects on the velocity and microrotation. In the critical case $\lambda\in (0,+\infty)$, the parameters $a_\lambda, b_\lambda$ are computed through local PDE problems (see Section \ref{sec:critical}, Theorem \ref{mainthmcrit}). However, in the subcritical case $\lambda=0$, the parameters $a_0, b_0$ can be explicitly computed (see Section \ref{sec:subcritical}, Theorem \ref{mainthmsubcrit}). In both cases, we obtain the same expression for the pressure as in \cite{Pazanin_thermomicropolar}. Moreover, the average of the temperature is obtained through a nonlinear problem in the critical case and is explicitly given in the subcritical case. Since the obtained findings are amenable for the numerical simulations, we believe that it could prove useful in the engineering practice as well.

\section{Formulation of the problem and preliminaries}\label{sec:setting}
In this section, we first define the thin, rough domain and some sets necessary to study the asymptotic behavior of the solutions.  Next, we introduce the problem considered in the thin domain and also, the rescaled problem posed in the domain of fixed height.

\subsection{The domain and some notation}  Let us denote $\omega=(-1/2,1/2)\subset\mathbb{R}$. We consider a thin domain with a rapidly oscillating thickness defined by
\begin{equation}\label{Omegaep}
\Omega^\varepsilon=\{x=(x_1,x_2)\in\mathbb{R}^2\,:\, x_1\in \omega,\ 0<x_2< h_\varepsilon(x_1)\},
\end{equation}
Here, the function $h_\ep(x_1)= \eta_\varepsilon h\left(x_1/\varepsilon\right)$ represents the real gap between the two surfaces. The small parameter $\eta_\varepsilon$ is related to the film thickness and  the small parameter $\varepsilon$ is the wavelength of the roughness. Here, we consider that $\eta_\varepsilon$ is of order smaller or equal than $\varepsilon$, i.e. we consider
\begin{equation}\label{cases}
\eta_\varepsilon\approx\varepsilon \quad \hbox{or}\quad \eta_\varepsilon\ll\varepsilon.
\end{equation}
\noindent Function $h\in W^{1,\infty}(\mathbb{R})$, $Z'$-periodic with $Z'=(-1/2,1/2)$ the cell of periodicity in $\mathbb{R}$, and there exist $h_{\rm min}$ and $h_{\rm max}$ such that
$$0<h_{\rm min}=\min_{z_1\in Z'} h(z_1),\quad h_{\rm max}=\max_{z_1\in Z'}h(z_1)\,.$$

\noindent
We define the boundaries of $\Omega^\varepsilon$ as follows
$$\begin{array}{c}
\displaystyle \Gamma_0 =\left\{(x_1,x_2)\in\mathbb{R}^2\,:\, x_1\in \omega,\ x_2=0\right\},\quad \Gamma_1^\varepsilon=\left\{(x_1,x_2)\in\mathbb{R}^2\,:\, x_1\in \omega,\ x_2=  h_\varepsilon(x_1)\right\}\\
\\
\displaystyle\Sigma_i^\varepsilon=\left\{(x_1,x_2)\in\mathbb{R}^2\,:\, x_1=i,\ 0<x_2<  h_\varepsilon(x_1)\right\},\quad i=-1/2,1/2.
\end{array}$$
 \noindent We also define the respective rescaled sets  $$\widetilde \Omega^\varepsilon=\omega\times (0,h(x_1/\varepsilon)), \quad \widetilde\Gamma_1^\varepsilon=\omega\times \{h(x_1/\varepsilon)\}\quad\hbox{and}\quad \widetilde \Sigma_i^\varepsilon=\{i\}\times (0,h(i/\varepsilon)),\quad i=-1/2,1/2.$$
Related  to the microstructure of the periodicity of the  boundary,  we consider that the domain $\omega$ is divided by a mesh of size $\ep$: for $k'\in\mathbb{Z}$, each cell $Z'_{k',\ep}=\ep k'+\ep Z'$. We define $T_\ep=\{k'\in\mathbb{Z}\,:\, Z'_{k',\ep}\cap\omega\neq \emptyset\}$. In this setting, there exists an exact finite number of periodic sets $Z'_{k',\ep}$ such that $k'\in T_\ep$.   Also, we define $Z_{k',\ep}=Z'_{k',\ep}\times (0,h(z_1))$ and  $Z=Z'\times (0,h(z_1))$, which is the reference cell in $\mathbb{R}^2$. We define the boundaries $\hat \Gamma_0=Z'\times \{0\}$, $\hat \Gamma_1=Z'\times\{h(z_1)\}$,  $\hat \Sigma_i =\{i\}\times \{i\}\times (0,h(i))$, $ i=-1/2,1/2$.  The quantity   $h_{\rm max}$ allows us to define the extended sets  $\Omega=\omega\times  (0, h_{\rm max})$    and $\Gamma_1=\omega\times \{h_{\rm max}\}$.\\

 \noindent In order to apply the unfolding method, we will use the following notation. For $k'\in \mathbb{Z}$, we define $\kappa: \mathbb{R}\to \mathbb{Z}$ by
\begin{equation}\label{kappa_fun}
\kappa(x_1)=k' \Longleftrightarrow x_1\in Z_{k',1}\,.
\end{equation}
Remark that $\kappa$ is well defined up to a set of zero measure in $\mathbb{R}$ (the set $\cup_{k\in \mathbb{Z}}\partial Y_{k,1}$). Moreover, for every $\varepsilon>0$, we have
$$\kappa\left({x_1\over \varepsilon}\right)=k'\Longleftrightarrow x_1\in Z_{k',\varepsilon}\,.$$

\noindent We denote by $C$ a generic constant which can change from line to line.\\

\noindent We use the following notation for the partial differential operators:
 $$\begin{array}{c}
 \displaystyle \Delta \Phi^\varepsilon=\left({\partial^2\Phi_1^\varepsilon\over \partial x_1^2}+{\partial^2\Phi_1^\varepsilon\over \partial x_2^2}\right){\bf e}_1+\left({\partial^2\Phi_2^\varepsilon\over \partial x_1^2}+{\partial^2\Phi_2^\varepsilon\over \partial x_2^2}\right){\bf e}_2,\quad  \displaystyle\Delta\varphi^\varepsilon={\partial^2\varphi^\varepsilon\over \partial x_1^2}+{\partial^2\varphi^\varepsilon\over \partial x_2^2},\\
 \\
  \displaystyle {\rm div}({\Phi}^\varepsilon)={\partial \Phi_1^\varepsilon\over \partial x_1}+{\partial \Phi_2^\varepsilon\over \partial x_2},\quad {\rm rot}({\Phi}^\varepsilon)={\partial \Phi_2^\varepsilon\over \partial x_1}-{\partial \Phi_1^\varepsilon\over \partial x_2}, \quad
   \displaystyle \nabla^\perp\varphi^\varepsilon=\left({\partial\varphi^\varepsilon\over \partial x_2},-{\partial\varphi^\varepsilon\over \partial x_1}\right),
 \end{array}$$
where ${\Phi}^\varepsilon=(\Phi^\varepsilon_1, \Phi_2^\varepsilon)$ is a vector function and $\varphi^\varepsilon$ is a scalar function defined in $\Omega^\varepsilon$. \\

\noindent Moreover,   for   ${\tilde \Phi}^\varepsilon=(\tilde \Phi^\varepsilon_1, \tilde \Phi_{2}^\varepsilon)$  a vector function and $\tilde \varphi^\varepsilon$  a scalar function defined in $\widetilde\Omega^\varepsilon$, after a dilatation in the vertical variable, we will use the following operators
 $$\begin{array}{ll}
 \displaystyle \Delta_{\eta_\varepsilon}{\tilde \Phi}^\varepsilon=\left({\partial^2\tilde \Phi_1^\varepsilon\over \partial x_1^2}+{1\over \eta_\varepsilon^2}{\partial^2\tilde \Phi_1^\varepsilon\over \partial {z_2}^2}\right){\bf e}_1+\left({\partial^2\tilde \Phi_{2}^\varepsilon\over \partial x_1^2}+{1\over \eta_\varepsilon^2}{\partial^2\tilde \Phi_{2}^\varepsilon\over \partial {z_2}^2}\right){\bf e}_2,&  \displaystyle\Delta_{\eta_\varepsilon}\tilde \varphi^\varepsilon={\partial^2\tilde \varphi^\varepsilon\over \partial x_1^2}+{1\over \eta_\varepsilon^2}{\partial^2\tilde \varphi^\varepsilon\over \partial {z_2}^2},\\
 \\
  \displaystyle {\rm div}_{\eta_\varepsilon}({\tilde \Phi}^\varepsilon)={\partial \tilde \Phi_1^\varepsilon\over \partial x_1}+{1\over \eta_\varepsilon}{\partial\tilde  \Phi_{2}^\varepsilon\over \partial {z_2}},\quad {\rm rot}_{\eta_\varepsilon}({\tilde  \Phi}^\varepsilon)={\partial \tilde \Phi_{2}^\varepsilon\over \partial x_1}-{1\over \eta_\varepsilon}{\partial \tilde \Phi_1^\varepsilon\over \partial z_2}, &
   \displaystyle \nabla_{\eta_\varepsilon}^\perp\tilde \varphi^\varepsilon=\left({1\over \eta_\varepsilon}{\partial\tilde \varphi^\varepsilon\over \partial z_2},-{\partial\tilde \varphi^\varepsilon\over \partial x_1}\right).
 \end{array}$$

\noindent  For $\varphi=(\varphi_1,\varphi_2)$ and $\psi=(\psi_1,\psi_2)$, we define $\tilde\otimes$ by
 \begin{equation}\label{tildeotimes}
 (\varphi\tilde\otimes \psi)_{ij}=\varphi_i\psi_j,\quad i=1,\ j=1,2\,.
 \end{equation}

 \noindent Finally, we introduce some functional spaces. $L^{q}_0$  is the space of functions of $L^q$  with zero mean value. Let $C^\infty_{\#}(Z)$ be the space of infinitely differentiable functions in $\mathbb{R}^3$ that are $Z'$-periodic. By $L^q_{\#}(Z)$ (resp. $W^{1,q}_{\#}(Z)$), $1<q<+\infty$, we denote its completion in the norm $L^q(Z)$ (resp. $W^{1,q}(Z)$) and by $L^q_{0,\#}(Z)$  the space of functions in $L^q_{\#}(Z)$ with zero mean value.

\subsection{The problem}  The governing equations in dimensionless form are given by
\begin{equation}\label{system_1}
\left\{\begin{array}{rl}
\displaystyle {1\over Pr}(({\bf u}^\varepsilon\cdot\nabla){\bf u}^\varepsilon+\nabla p^\varepsilon)=\Delta {\bf u}^\varepsilon+ {N\over 1-N}(2\nabla^\perp w^\varepsilon+\Delta{\bf u}^\varepsilon)+Ra\,T^\varepsilon{\bf e}_2+{\bf f}^\varepsilon & \hbox{in}\ \Omega^\varepsilon,\\
\\
{\rm div}( {\bf u}^\varepsilon)=0& \hbox{in}\ \Omega^\varepsilon,\\
\\
\displaystyle {M\over Pr}({\bf u}^\varepsilon\cdot \nabla w^\varepsilon)=L\Delta w^\varepsilon+{2N\over 1-N}({\rm rot}({\bf u}^\varepsilon)-2w^\varepsilon)+g^\varepsilon& \hbox{in}\ \Omega^\varepsilon,\\
\\
{\bf u}^\varepsilon\cdot \nabla T^\varepsilon=\Delta T^\varepsilon+D\nabla^\perp w^\varepsilon\cdot \nabla T^\varepsilon& \hbox{in}\ \Omega^\varepsilon.
\end{array}\right.
\end{equation}
We complete the above system with the following boundary conditions on the bottom
\begin{equation}\label{BCBot}
\begin{array}{ll}
\displaystyle {\bf u}^\varepsilon=0,\quad w^\varepsilon=0,\quad T^\varepsilon=0&\hbox{on}\ \Gamma_0,
\end{array}
\end{equation}
the following conditions on the lateral boundaries
\begin{equation}\label{BClat}
\begin{array}{rl}
\displaystyle {\bf u}^\varepsilon\cdot {\bf e}_2=0,\quad w^\varepsilon=0,\quad T^\varepsilon=0,\quad p^\varepsilon={1\over \eta_\varepsilon^2}q_i&\hbox{on}\ \Sigma_{i}^\varepsilon,\ i=\{-1/2,1/2\},
\end{array}
\end{equation}
and the following boundary conditions on the top boundary
\begin{equation}\label{BCTopBot1}
\begin{array}{rl}
{\bf u}^\varepsilon=0,\quad  w^\varepsilon=0,\quad  \nabla T^\varepsilon \cdot {\bf n}=Nus(G^\varepsilon-T^\varepsilon)&\hbox{on}\ \Gamma_1^\varepsilon.
\end{array}
\end{equation}
Here, ${\bf u}^\varepsilon=(u^\varepsilon_1, u^\varepsilon_2)$ represents the velocity field, $p^\varepsilon$ the pressure, $w^\varepsilon$ the microrotation and $T^\varepsilon$  the temperature. The external body forces are given by ${\bf f}^\varepsilon=(f^\varepsilon_1, f^\varepsilon_2)$ and the external body torque by $g^\varepsilon$.
\\

\noindent We make the following assumptions:
\begin{itemize}

\item[--] The Robin boundary condition (\ref{BCTopBot1})$_3$ comes from Newton's cooling law and describes the heat exchange through the upper wall between the exterior medium and the fluid inside the channel. Due to domain's microstructure, it is assumed that the exterior temperature $G^\varepsilon=G(x_1/\varepsilon)$ with $G\in L^2(Z')$ a given $Z'$-periodic and bounded function
depending only on the longitudinal variable. \\

\item[--] Following previous result \cite{Pazanin_thermomicropolar}, we consider that the Nusselt number $Nus$ depends on $\varepsilon$, whereas all the other characteristic numbers are kept independent of $\varepsilon$. In fact, we compare the Nusselt number $Nus$ to the small parameter of the height $\eta_\varepsilon$. Namely, we consider the following scaling of the Nusselt number
\begin{equation}\label{Nusselt}Nus=\eta_\varepsilon\,k,\quad k=\mathcal{O}(1).
\end{equation}
\item[--] We assume that the external source functions are independent of the variable $x_2$ and take the following scaling
\begin{equation}\label{externalforces}
{\bf f}^\varepsilon={1\over \eta_\varepsilon^2}(f_1(x_1),0),\quad   g^\varepsilon={1\over \eta_\varepsilon^2}g(x_1),\quad \hbox{with}\quad f_1, g\in L^2(\omega).
\end{equation}
\end{itemize}
Under the previous assumptions, the well-posedness of the problem (\ref{system_1})-(\ref{BCTopBot1}) can be established using the methods from \cite{Luka} (see also \cite{kalluksie}) and prove that there exists a unique weak solution $({\bf u}^\varepsilon, w^\varepsilon, p^\varepsilon, T^\varepsilon)\in H^1(\Omega^\varepsilon)^2\times H^1_0(\Omega^\varepsilon)\times L^2_0(\Omega^\varepsilon)  \times H^1(\Omega^\varepsilon)$.
 \\

\noindent Our aim is to study the asymptotic behavior of $u_\ep$, $w_\ep$, $p_\ep$ and $T^\varepsilon$ when $\ep$ and $\eta_\ep$ tend to zero and identify homogenized models coupling the effects of the thickness of the domain and the roughness of the boundary.  For this, we use the dilatation in the variable $x_2$ given by
\begin{equation}\label{dilatacion}
z_2={x_2\over \eta_\ep}\,,
\end{equation}
in order to have the functions defined in the open set with fixed height   $\widetilde\Omega_\ep$ and the rescaled boundaries $\widetilde \Gamma_1^\varepsilon$ and  $\widetilde \Sigma^\varepsilon_i$, $i=-1/2,1/2$. Then, using the change of variables (\ref{dilatacion}) in  (\ref{system_1})-(\ref{BCTopBot1}), we obtain the following rescaled system
\begin{equation}\label{system_1_dil}
\left\{\begin{array}{rl}
\displaystyle {1\over Pr}((\tilde {\bf u}^\varepsilon\cdot\nabla_{\eta_\varepsilon})\tilde{\bf  u}^\varepsilon+\nabla_{\eta_\varepsilon}\tilde  p^\varepsilon)=\Delta_{\eta_\varepsilon} \tilde {\bf u}^\varepsilon+ {N\over 1-N}(2\nabla_{\eta_\varepsilon}^\perp \tilde w^\varepsilon+\Delta_{\eta_\varepsilon}\tilde{\bf  u}^\varepsilon)+Ra\,\tilde T^\varepsilon{\bf e}_2+{\bf f}^\varepsilon& \hbox{in}\ \widetilde \Omega^\varepsilon,\\
\\
{\rm div}_{\eta_\varepsilon}( \tilde{\bf  u}^\varepsilon)=0& \hbox{in}\ \widetilde \Omega^\varepsilon,\\
\\
\displaystyle {M\over Pr}(\tilde{\bf  u}^\varepsilon\cdot \nabla_{\eta_\varepsilon} \tilde w^\varepsilon)=L\Delta_{\eta_\varepsilon} \tilde w^\varepsilon+{2N\over 1-N}({\rm rot}_{\eta_\varepsilon}
( \tilde{\bf  u}^\varepsilon)-2\tilde w^\varepsilon)+g^\varepsilon& \hbox{in}\ \widetilde \Omega^\varepsilon,\\
\\
\tilde{\bf  u}^\varepsilon\cdot \nabla_{\eta_\varepsilon} \tilde T^\varepsilon=\Delta_{\eta_\varepsilon} \tilde T^\varepsilon+D\nabla_{\eta_\varepsilon}^\perp \tilde w^\varepsilon\cdot \nabla_{\eta_\varepsilon} \tilde T^\varepsilon& \hbox{in}\ \widetilde \Omega^\varepsilon,
\end{array}\right.
\end{equation}
with the boundary conditions
\begin{equation}\label{BCbot1_dil}
\begin{array}{ll}
\displaystyle \tilde{\bf  u}^\varepsilon=0,\quad \tilde w^\varepsilon=0,\quad \tilde T^\varepsilon=0&\hbox{on}\   \Gamma_0,\end{array}
\end{equation}
\begin{equation}\label{BClat1_dil}
\begin{array}{ll}
\displaystyle \tilde{\bf  u}^\varepsilon\cdot {\bf e}_2=0,\quad \tilde w^\varepsilon=0,\quad \tilde T^\varepsilon=0,\quad \tilde p^\varepsilon={1\over \eta_\varepsilon^2}q_i&\hbox{on}\ \widetilde \Sigma^\varepsilon_i,\ i=\{-1/2,1/2\},\end{array}
\end{equation}
\begin{equation}\label{BCTop1_dil}
\begin{array}{rl}
\tilde{\bf  u}^\varepsilon=0,\quad  \tilde w^\varepsilon=0,\quad  \nabla_{\eta_\varepsilon}\tilde T^\varepsilon \cdot {\bf n}=\eta_\ep k(G^\varepsilon-\tilde T^\varepsilon)&\hbox{on}\ \widetilde \Gamma_1^\varepsilon,
\end{array}
\end{equation}

\noindent The unknown functions in the above system are given by $\tilde{\bf  u}^\varepsilon(x_1,z_2)={\bf u}^\varepsilon(x_1,\eta_\varepsilon z_2)$, $\tilde p^\varepsilon(x_1,z_2)=p^\varepsilon(x_1,\eta_\varepsilon z_2)$, $\tilde w^\varepsilon(x_1,z_2)=w^\varepsilon(x_1,\eta_\varepsilon z_2)$ and $\tilde T^\varepsilon(x_1,z_2)=T^\varepsilon(x_1,\eta_\varepsilon z_2)$ for a.e. $(x_1,z_2)\in \widetilde\Omega^\varepsilon$. \\

\noindent  Our goal then is to describe the asymptotic behavior of this new sequences $ \tilde{\bf  u}_\ep$, $\tilde w_\ep$, $\tilde p_\ep$ and $\tilde T^\varepsilon$ when $\ep$ and $\eta_\ep$ tend to zero.  To do this, we establish the a priori estimates and introduce the adaptation of the unfolding method in Section \ref{sec:estimates}. We obtain the limit model of the critical case ($\eta_\ep\approx \ep$) in Section \ref{sec:critical} and of the sub-critical case ($\eta_\ep\ll \ep$) in Section \ref{sec:subcritical}.

\section{{\it A priori} estimates} \label{sec:estimates}
This section is devoted to derive the {\it a priori} estimates of the unknowns and is divided in three parts. First, we deduce  the {\it a priori} estimates for velocity, microrotation and temperature and second, we derive the estimates for pressure. Finally, we introduce the adaptation of the unfolding method and derive the {\it a priori} estimates of the unfolded functions.

\subsection{Estimates for velocity, microroration and temperature} To derive the desired estimates,  let us recall some well-known technical results (see, e.g. \cite{Pazanin_thermomicropolar}).
\begin{lemma}[Poincar\'e and Ladyzhenskaya inequalities]\label{Poincare_lemma} For all $\varphi\in H^1(\Omega^\varepsilon)$ such that $\varphi=0$ on $\Gamma_0$, there hold the following inequalities
\begin{equation}\label{Poincare}
\|\varphi\|_{L^2(\Omega^\varepsilon)}\leq C\eta_\varepsilon\|\nabla \varphi\|_{L^2(\Omega^\varepsilon)^2},\quad
 \|\varphi\|_{L^4(\Omega^\varepsilon)}\leq C\eta_\varepsilon^{1\over 2}\|\nabla \varphi\|_{L^2(\Omega^\varepsilon)^2}.
\end{equation}
Moreover, from the change of variables (\ref{dilatacion}),   there hold the following rescaled estimates
\begin{equation}\label{Poincare2}
\|\tilde \varphi\|_{L^2(\widetilde \Omega^\varepsilon)}\leq C\eta_\varepsilon\|\nabla_{\eta_\ep} \tilde \varphi\|_{L^2(\widetilde \Omega^\varepsilon)^2},\quad \|\tilde \varphi\|_{L^4(\widetilde \Omega^\varepsilon)}\leq C\eta_\varepsilon^{3\over 4}\|\nabla_{\eta_\ep} \tilde \varphi\|_{L^2(\widetilde \Omega^\varepsilon)^2}.
\end{equation}
\end{lemma}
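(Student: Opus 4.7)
The two pairs of inequalities split naturally: first I would prove the unscaled statements (3.1) on $\Omega^\varepsilon$, and then read off the rescaled statements (3.2) by direct substitution using $z_2=x_2/\eta_\varepsilon$. For the Poincar\'e inequality, the hypothesis $\varphi=0$ on $\Gamma_0$ lets me write
$$\varphi(x_1,x_2)=\int_0^{x_2}\partial_{x_2}\varphi(x_1,s)\,ds,$$
apply Cauchy--Schwarz in $s$ to get $|\varphi(x_1,x_2)|^2\le x_2\int_0^{h_\varepsilon(x_1)}|\partial_{x_2}\varphi(x_1,s)|^2\,ds$, and integrate over $\Omega^\varepsilon$. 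Because $x_2\le h_\varepsilon(x_1)\le \eta_\varepsilon h_{\max}$ pointwise, this gives $\|\varphi\|_{L^2(\Omega^\varepsilon)}^2\le \tfrac12(\eta_\varepsilon h_{\max})^2\|\partial_{x_2}\varphi\|_{L^2(\Omega^\varepsilon)}^2$, which is the first inequality in (3.1).

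For the Ladyzhenskaya estimate I would combine the previous bound with the classical $2$D interpolation $\|\psi\|_{L^4}^2\le C\|\psi\|_{L^2}\|\nabla\psi\|_{L^2}$, which holds with a universal constant for $\psi\in H^1(\mathbb R^2)$. To apply it here, I extend $\varphi$ by zero across $\Gamma_0$ (permissible since the trace of $\varphi$ vanishes there) and then use a uniform Lipschitz extension operator across $\Gamma_1^\varepsilon$ and the lateral sides; controlling this step $\varepsilon$-uniformly is the main technical subtlety. The cleanest way to do so is to pass to the rescaled domain $\widetilde\Omega^\varepsilon\subset\omega\times(0,h_{\max})$ (whose Lipschitz character is governed by $\|h\|_{W^{1,\infty}}$, hence independent of $\varepsilon$), extend there, and then revert. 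Applying the interpolation inequality and inserting the Poincar\'e bound yields $\|\varphi\|_{L^4(\Omega^\varepsilon)}^2\le C\|\varphi\|_{L^2}\|\nabla\varphi\|_{L^2}\le C\eta_\varepsilon\|\nabla\varphi\|_{L^2(\Omega^\varepsilon)^2}^2$, which is the second inequality of (3.1).

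Finally, the rescaled inequalities (3.2) follow from the change of variables $z_2=x_2/\eta_\varepsilon$. A direct computation gives the two scaling identities
$$\|\tilde\varphi\|_{L^p(\widetilde\Omega^\varepsilon)}^p=\eta_\varepsilon^{-1}\|\varphi\|_{L^p(\Omega^\varepsilon)}^p \quad(p=2,4),\qquad \|\nabla_{\eta_\varepsilon}\tilde\varphi\|_{L^2(\widetilde\Omega^\varepsilon)^2}^2=\eta_\varepsilon^{-1}\|\nabla\varphi\|_{L^2(\Omega^\varepsilon)^2}^2.$$
Substituting these into the two inequalities of (3.1) preserves the $L^2$ estimate verbatim and converts the $L^4$ power $\eta_\varepsilon^{1/2}$ into $\eta_\varepsilon^{3/4}$ (since on the $L^4$ side one loses a factor $\eta_\varepsilon^{1/4}$ and on the gradient side one gains $\eta_\varepsilon^{1/2}$, giving the net shift from $1/2$ to $3/4$). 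The only real obstacle in the whole argument is the $\varepsilon$-uniformity of the extension needed for Ladyzhenskaya, which is handled by rescaling to fixed height before extending.
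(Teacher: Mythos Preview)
Your Poincar\'e argument and the rescaling computation for (3.2) are correct, and since the paper does not give its own proof of this lemma (it is merely recalled from \cite{Pazanin_thermomicropolar}), your outline is a legitimate way to fill the gap. There is, however, a genuine error in your treatment of the Ladyzhenskaya part.

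You claim that the rescaled domain $\widetilde\Omega^\varepsilon=\{0<z_2<h(x_1/\varepsilon)\}$ has Lipschitz character governed by $\|h\|_{W^{1,\infty}}$ and hence independent of~$\varepsilon$. This is false: the graph $x_1\mapsto h(x_1/\varepsilon)$ has slope $\varepsilon^{-1}h'(x_1/\varepsilon)$, which blows up as $\varepsilon\to 0$, so any extension operator built on $\widetilde\Omega^\varepsilon$ via its Lipschitz regularity will carry an $\varepsilon^{-1}$ loss (this is exactly the loss visible in the operator $\mathcal P^\varepsilon$ used later in the paper, cf.~(\ref{Extension_op})). The situation is the opposite of what you wrote: it is the \emph{original} domain $\Omega^\varepsilon$ whose top boundary $x_2=\eta_\varepsilon h(x_1/\varepsilon)$ has slope $(\eta_\varepsilon/\varepsilon)h'(x_1/\varepsilon)$, and this \emph{is} bounded uniformly in~$\varepsilon$ because the paper only considers the regimes $\eta_\varepsilon\approx\varepsilon$ or $\eta_\varepsilon\ll\varepsilon$ (see (\ref{cases})).

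The fix is therefore to reflect in the unscaled domain. Extend $\varphi$ by zero for $x_2<0$ (allowed since $\varphi|_{\Gamma_0}=0$), then set $E\varphi(x_1,x_2)=\varphi(x_1,2h_\varepsilon(x_1)-x_2)$ for $h_\varepsilon(x_1)<x_2<2h_\varepsilon(x_1)$; because the reflected function vanishes at $x_2=2h_\varepsilon(x_1)$, a further zero extension gives $E\varphi\in H^1(\omega\times\mathbb R)$ with $\|\nabla E\varphi\|_{L^2}\le C(1+\|h_\varepsilon'\|_\infty)\|\nabla\varphi\|_{L^2}\le C\|\nabla\varphi\|_{L^2}$. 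On the fixed strip $\omega\times\mathbb R$ the Ladyzhenskaya interpolation holds with a universal constant, and combining it with your Poincar\'e bound yields $\|\varphi\|_{L^4(\Omega^\varepsilon)}\le C\eta_\varepsilon^{1/2}\|\nabla\varphi\|_{L^2(\Omega^\varepsilon)^2}$ as required. The rest of your argument then goes through unchanged.
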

\begin{lemma}
[Trace estimates]\label{trace_lemma} For all $\varphi\in H^1(\Omega^\varepsilon)$ such that $\varphi=0$ on $\Gamma_0$, there hold the following trace estimates:
\begin{equation}\label{trace_estim1}\|\varphi\|_{L^2(\Gamma_1^\varepsilon)}\leq C\eta_\varepsilon^{1\over 2}\|\nabla\varphi\|_{L^2(\Omega^\varepsilon)^2},
\end{equation}
Moreover, for every case,  the rescaled function satisfies the following estimate
\begin{equation}\label{trace_estim1_dil}\|\tilde \varphi\|_{L^2(\widetilde \Gamma_1^\varepsilon)}\leq C\eta_\varepsilon\varepsilon^{-{1\over 2} }\|\nabla_{\eta_\ep}\tilde \varphi\|_{L^2(\widetilde \Omega^\varepsilon)^2},
\end{equation}
\end{lemma}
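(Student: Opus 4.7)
The plan is to obtain both estimates by combining the fundamental theorem of calculus (exploiting the vanishing trace on $\Gamma_0$) with a careful handling of the arclength element on the oscillating upper boundary. For any $\varphi\in H^1(\Omega^\varepsilon)$ with $\varphi=0$ on $\Gamma_0=\omega\times\{0\}$, one has for almost every $x_1\in\omega$
\[
\varphi(x_1,h_\varepsilon(x_1))=\int_0^{h_\varepsilon(x_1)}\frac{\partial \varphi}{\partial x_2}(x_1,x_2)\,dx_2,
\]
and an analogous identity holds for $\tilde\varphi$ on $\widetilde\Gamma_1^\varepsilon$. Cauchy--Schwarz then yields a pointwise bound by the vertical length of the fiber times the $L^2$ norm of the vertical derivative on that fiber.

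For \eqref{trace_estim1}, I parametrize $\Gamma_1^\varepsilon$ by $x_1\mapsto(x_1,h_\varepsilon(x_1))$, so that the arclength element is $ds=\sqrt{1+(\eta_\varepsilon/\varepsilon)^2(h'(x_1/\varepsilon))^2}\,dx_1$. Under the standing hypothesis $\eta_\varepsilon\leq C\varepsilon$ and the $W^{1,\infty}$ regularity of $h$, this Jacobian is bounded by a constant independent of $\varepsilon$. Combining this with Cauchy--Schwarz gives
\[
|\varphi(x_1,h_\varepsilon(x_1))|^2\leq h_{\max}\,\eta_\varepsilon\int_0^{h_\varepsilon(x_1)}|\nabla\varphi|^2\,dx_2,
\]
and integrating over $\omega$ yields the claimed bound with the factor $\eta_\varepsilon^{1/2}$.

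For the rescaled estimate \eqref{trace_estim1_dil}, I apply the same strategy to $\tilde\varphi$ on $\widetilde\Gamma_1^\varepsilon=\{x_2=h(x_1/\varepsilon)\}$, but now the arclength element becomes $\sqrt{1+\varepsilon^{-2}(h'(x_1/\varepsilon))^2}\,dx_1$, which is bounded by $C\varepsilon^{-1}\,dx_1$ and contributes the factor $\varepsilon^{-1/2}$. Cauchy--Schwarz on the fiber produces
\[
|\tilde\varphi(x_1,h(x_1/\varepsilon))|^2\leq h_{\max}\int_0^{h(x_1/\varepsilon)}\Big|\frac{\partial\tilde\varphi}{\partial z_2}\Big|^2 dz_2=h_{\max}\,\eta_\varepsilon^2\int_0^{h(x_1/\varepsilon)}\Big|\frac{1}{\eta_\varepsilon}\frac{\partial\tilde\varphi}{\partial z_2}\Big|^2 dz_2,
\]
where the $\eta_\varepsilon^2$ is precisely what is needed to convert $\partial_{z_2}$ into the vertical component of the scaled gradient $\nabla_{\eta_\varepsilon}\tilde\varphi$. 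Multiplying the two bounds produces the combined factor $\eta_\varepsilon\,\varepsilon^{-1/2}$.

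The only nontrivial point is the treatment of the arclength Jacobian on the oscillating upper boundary: in the unscaled geometry the slope $h_\varepsilon'=({\eta_\varepsilon}/{\varepsilon})h'(\cdot/\varepsilon)$ is bounded because of the hypothesis $\eta_\varepsilon\lesssim\varepsilon$, whereas in the rescaled geometry the graph function $h(x_1/\varepsilon)$ oscillates with wavelength $\varepsilon$ and therefore the Jacobian inevitably blows up like $\varepsilon^{-1}$, which is exactly the source of the singular factor $\varepsilon^{-1/2}$ distinguishing \eqref{trace_estim1_dil} from its flat-domain counterpart. Everything else is a routine application of the trace inequality on a fiber of fixed (dilated) height.
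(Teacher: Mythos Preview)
Your proof is correct and follows essentially the same approach as the paper: parametrize the upper boundary as a graph, control the arclength Jacobian (bounded in the unscaled case since $\eta_\varepsilon\lesssim\varepsilon$, and of order $\varepsilon^{-1}$ in the rescaled case), then apply the fundamental theorem of calculus from $\Gamma_0$ together with Cauchy--Schwarz on each vertical fiber. The paper's argument is line-by-line the same, only writing $\partial_{x_2}\varphi$ in place of the full gradient in the intermediate bound.
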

\begin{proof}
Since the upper boundary $\Gamma_1^\varepsilon$ is not flat, one needs to take into account the variations of the normal direction ${\bf n}$ in order to estimate the $L^2$-norm of the trace of a function $\varphi\in H^1(\Omega^\varepsilon)$. Intregrating on vertical lines, we obtain
$$\begin{array}{rl}
\displaystyle \int_{\Gamma_1^\varepsilon}|\varphi|^2\,d\sigma=& \displaystyle \int_{\omega}|\varphi(x_1,h_\varepsilon(x_1))|^2\sqrt{1+\left({\eta_\varepsilon\over\varepsilon}\right)^2\left|h'\left({x_1\over \varepsilon}\right)\right|^2}\,dx_1\\
\leq &\displaystyle   C\left(1+{\eta_\varepsilon^2  \varepsilon^{-2}}\right)^{1\over 2}\int_\omega|\varphi(x_1,h_\varepsilon(x_1))|^2dx_1\\
\leq &\displaystyle C\left(1+{\eta_\varepsilon^2  \varepsilon^{-2}}\right)^{1\over 2}\int_{\omega}\left|\int_0^{h_\varepsilon(x_1)}\partial_{x_2}\varphi(x_1,x_2)dx_2\right|^2dx_1\\
\leq &\displaystyle C\left(1+{\eta_\varepsilon^2  \varepsilon^{-2}}\right)^{1\over 2}\eta_\varepsilon\int_{\Omega^\varepsilon}|\partial_{x_2} \varphi(x_1,x_2)|^2dx_1dx_2.
\end{array}
$$
Then, since $\eta_\varepsilon\ll \varepsilon$ or $\eta_\varepsilon\approx \varepsilon$, it holds
$$
\displaystyle \int_{\Gamma_1^\varepsilon}|\varphi|^2\,d\sigma\leq C \eta_\varepsilon\int_{\Omega^\varepsilon}|\partial_{x_2} \varphi(x_1,x_2)|^2dx_1dx_2,$$
which implies (\ref{trace_estim1}).\\

\noindent For the rescaled function, proceeding analogously, we get
$$\begin{array}{rl}
\displaystyle \int_{\widetilde \Gamma_1^\varepsilon}|\tilde \varphi|^2\,d\sigma=& \displaystyle \int_{\omega}|\tilde \varphi(x_1,h(x_1/\ep))|^2\sqrt{1+\left({1\over\varepsilon}\right)^2\left|h'\left({x_1\over \varepsilon}\right)\right|^2}\,dx_1\\
\leq &\displaystyle   C\left(1+{\varepsilon^{-2}}\right)^{1\over 2}\int_\omega|\tilde \varphi(x_1,h(x_1/\ep))|^2dx_1\\
\leq &\displaystyle C\left(1+{\varepsilon^{-2}}\right)^{1\over 2}\int_{\omega}\left|\int_0^{h(x_1/\ep)}\partial_{x_2}\tilde \varphi(x_1,x_2)dx_2\right|^2dx_1\\
\leq &\displaystyle C\left(1+{\varepsilon^{-2}}\right)^{1\over 2}\eta_\ep^2\int_{\widetilde \Omega^\varepsilon}|\eta_\ep^{-1}\partial_{x_2}\tilde  \varphi(x_1,x_2)|^2dx_1dx_2.
\end{array}
$$
Then,  we have that
$$ \int_{\widetilde \Gamma_1^\varepsilon}|\tilde \varphi|^2\,d\sigma\leq C\eta_\ep^2\ep^{-1}\int_{\widetilde \Omega^\varepsilon}|\eta_\ep^{-1}\partial_{x_2} \tilde \varphi(x_1,x_2)|^2dx_1dx_2,$$
which implies (\ref{trace_estim1_dil}).

\end{proof}

\begin{corollary}\label{G_lemma} For  $G^\varepsilon(x_1)=G(x_1/\varepsilon)$ with $G\in L^2(Z')$ and $Z'$-periodic function, we have the following estimate
\begin{equation}\label{Gestim1}\|G^\varepsilon\|_{L^2(\Gamma_1^\varepsilon)}\leq C\,.
\end{equation}
\end{corollary}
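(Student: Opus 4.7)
The plan is to parametrize the rough upper boundary $\Gamma_1^\varepsilon$ by the horizontal variable $x_1\in\omega$ via the graph $x_2=\eta_\varepsilon h(x_1/\varepsilon)$, so that the surface measure becomes
$$d\sigma = \sqrt{1+\bigl(\eta_\varepsilon/\varepsilon\bigr)^{2}\bigl|h'(x_1/\varepsilon)\bigr|^{2}}\,dx_1.$$
Because $h\in W^{1,\infty}(\mathbb{R})$ and the scaling regime (\ref{cases}) forces $\eta_\varepsilon/\varepsilon$ to be bounded, the weight is uniformly bounded by a constant independent of $\varepsilon$. Thus
$$\|G^\varepsilon\|_{L^2(\Gamma_1^\varepsilon)}^{2} \leq C\int_\omega \bigl|G(x_1/\varepsilon)\bigr|^{2}\,dx_1.$$

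For the remaining integral I would exploit the $Z'$-periodicity of $G$ by splitting $\omega$ into the cells $Z'_{k',\varepsilon}$ for $k'\in T_\varepsilon$ (introduced in the previous subsection). On each cell the change of variable $z_1=x_1/\varepsilon-k'$ contributes a Jacobian factor $\varepsilon$ and returns $\varepsilon\|G\|_{L^2(Z')}^{2}$; summing over the $\#T_\varepsilon\sim 1/\varepsilon$ cells (plus at most two boundary cells that can be absorbed into the constant by the boundedness of $G$ in $L^2(Z')$ extended periodically) gives
$$\int_\omega \bigl|G(x_1/\varepsilon)\bigr|^{2}\,dx_1 \leq C\,\|G\|_{L^2(Z')}^{2},$$
with $C$ depending only on $|\omega|$. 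Combining the two estimates yields (\ref{Gestim1}). There is no real obstacle here; the only point worth emphasizing is the combined use of the scaling assumption $\eta_\varepsilon\lesssim\varepsilon$ (to control the arclength element of the oscillating graph) and the periodicity (to transform an $\varepsilon$-dependent oscillating integral on $\omega$ into a fixed integral on the reference cell $Z'$).
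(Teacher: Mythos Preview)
Your proof is correct and follows the same approach the paper has in mind: parametrize $\Gamma_1^\varepsilon$ as the graph $x_2=\eta_\varepsilon h(x_1/\varepsilon)$, bound the arclength weight $\sqrt{1+(\eta_\varepsilon/\varepsilon)^2|h'|^2}$ uniformly via $h\in W^{1,\infty}$ and the scaling assumption $\eta_\varepsilon\lesssim\varepsilon$, then use the $Z'$-periodicity of $G$ to control $\int_\omega |G(x_1/\varepsilon)|^2\,dx_1$ by $\|G\|_{L^2(Z')}^2$. The paper's own proof simply points back to the computation in Lemma~\ref{trace_lemma}, which is exactly the first of your two steps; you have merely made explicit the periodicity argument that the paper leaves to the reader.
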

\begin{proof} The proof follows the line of estimates (\ref{trace_estim1}) and (\ref{trace_estim1}), just taking into account that $G\in L^2(Z')$.

\end{proof}
\begin{lemma}[A priori estimates]\label{lemma_estimates} Let $({\bf u}^\varepsilon, w^\varepsilon, T^\varepsilon)$ be the solution of the problem (\ref{system_1})-(\ref{BCTopBot1}). Then there hold the following estimates
\begin{eqnarray}
\displaystyle
\|{\bf u}^\varepsilon\|_{L^2(\Omega^\varepsilon)^2}\leq C\eta_\varepsilon^{1\over 2}, &\displaystyle
\|\nabla {\bf u}^\varepsilon\|_{L^2(\Omega^\varepsilon)^{2\times 2}}\leq C\eta_\varepsilon^{-{1\over 2}},&\medskip \label{estim_sol1}\\
\displaystyle
\|w^\varepsilon\|_{L^2(\Omega^\varepsilon)}\leq C\eta_\varepsilon^{1\over 2}, &\displaystyle
\|\nabla w^\varepsilon\|_{L^2(\Omega^\varepsilon)^{2}}\leq C\eta_\varepsilon^{-{1\over 2}},&\medskip\label{estim_sol2}\\
 \|T^\varepsilon\|_{L^2(\Omega^\varepsilon)}\leq C\eta_\varepsilon^{5\over 2}, &   \displaystyle\|\nabla T^\varepsilon\|_{L^2(\Omega^\varepsilon)^{2}}\leq C\eta_\varepsilon^{3\over 2}.\label{estim_sol3}&
\end{eqnarray}
Moreover, from the change of variables (\ref{dilatacion}),   there hold the following estimates for the rescaled unknowns
\begin{eqnarray}
\displaystyle
\| \tilde{\bf u}^\varepsilon\|_{L^2(\widetilde \Omega^\varepsilon)^2}\leq C, &\displaystyle
\|\nabla_{\eta_\varepsilon} \tilde{\bf u}^\varepsilon\|_{L^2(\widetilde\Omega^\varepsilon)^{2\times 2}}\leq C\eta_\varepsilon^{-1},& \label{estim_sol_dil1}\medskip
\\
\displaystyle
\|\tilde w^\varepsilon\|_{L^2(\widetilde\Omega^\varepsilon)}\leq C, &\displaystyle
\|\nabla_{\eta_\varepsilon} \tilde w^\varepsilon\|_{L^2(\widetilde\Omega^\varepsilon)^{2}}\leq C\eta_\varepsilon^{-1},\label{estim_sol_dil2}\medskip\\
 \displaystyle \|\tilde T^\varepsilon\|_{L^2(\widetilde\Omega^\varepsilon)}\leq C\eta_\varepsilon^{2}, &    \displaystyle\|\nabla_{\eta_\varepsilon} \tilde T^\varepsilon\|_{L^2(\widetilde\Omega^\varepsilon)^{2}}\leq C\eta_\varepsilon.& \label{estim_sol_dil3}
\end{eqnarray}
\end{lemma}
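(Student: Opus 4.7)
The plan is to derive the unscaled bounds (\ref{estim_sol1})--(\ref{estim_sol3}) by the standard energy method and then transfer to the rescaled ones by the change of variable (\ref{dilatacion}), which multiplies $L^2$-norms by $\eta_\varepsilon^{1/2}$ and turns $\nabla$ into $\nabla_{\eta_\varepsilon}$. I would treat the temperature equation first because its energy identity decouples from the rest. Testing (\ref{system_1})$_4$ by $T^\varepsilon$, the convective term $\frac{1}{2}\int {\bf u}^\varepsilon\cdot\nabla(T^\varepsilon)^2$ vanishes after integration by parts, because ${\rm div}({\bf u}^\varepsilon)=0$ in $\Omega^\varepsilon$, ${\bf u}^\varepsilon$ vanishes on $\Gamma_1^\varepsilon$ and $T^\varepsilon$ vanishes on $\Gamma_0\cup\Sigma_i^\varepsilon$. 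The nonlinear micropolar contribution $D\int\nabla^\perp w^\varepsilon\cdot\nabla T^\varepsilon\,T^\varepsilon=\frac{D}{2}\int\nabla^\perp w^\varepsilon\cdot\nabla (T^\varepsilon)^2$ also vanishes, since ${\rm div}(\nabla^\perp w^\varepsilon)=0$ and, on the only boundary piece $\Gamma_1^\varepsilon$ where $T^\varepsilon$ is not prescribed to be zero, $\nabla^\perp w^\varepsilon\cdot{\bf n}$ coincides with the tangential derivative of $w^\varepsilon$, which vanishes because $w^\varepsilon\equiv 0$ on $\Gamma_1^\varepsilon$. One is thus left with
\[
\int_{\Omega^\varepsilon}|\nabla T^\varepsilon|^2+\eta_\varepsilon k\int_{\Gamma_1^\varepsilon}(T^\varepsilon)^2=\eta_\varepsilon k\int_{\Gamma_1^\varepsilon}G^\varepsilon T^\varepsilon,
\]
and Cauchy--Schwarz together with the trace estimate (\ref{trace_estim1}) and Corollary \ref{G_lemma} yield (\ref{estim_sol3}) after one more application of Poincar\'e (\ref{Poincare}).

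For the velocity and microrotation I would test (\ref{system_1})$_1$ with ${\bf u}^\varepsilon$ and (\ref{system_1})$_3$ with $w^\varepsilon$, and combine the two resulting identities so that the antisymmetric coupling terms cancel (this uses $\int\nabla^\perp w^\varepsilon\cdot{\bf u}^\varepsilon=\int{\rm rot}({\bf u}^\varepsilon)w^\varepsilon$, with no boundary contribution since $w^\varepsilon=0$ on $\partial\Omega^\varepsilon$). The convective nonlinearities vanish exactly as in the temperature case. The source terms are controlled by Cauchy--Schwarz using the scaling (\ref{externalforces}) together with Poincar\'e, while the buoyancy term $Ra\int T^\varepsilon u_2^\varepsilon$ is absorbed by Young's inequality using the bound on $T^\varepsilon$ just obtained. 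Combining everything gives $\|\nabla {\bf u}^\varepsilon\|_{L^2(\Omega^\varepsilon)^{2\times 2}}+\|\nabla w^\varepsilon\|_{L^2(\Omega^\varepsilon)^{2}}\leq C\eta_\varepsilon^{-1/2}$, and a further use of (\ref{Poincare}) produces (\ref{estim_sol1})--(\ref{estim_sol2}).

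The truly delicate point is the pressure boundary contribution $\eta_\varepsilon^{-2}\sum_i(\pm q_i)\int_{\Sigma_i^\varepsilon} u_1^\varepsilon$ arising from integration by parts on $\int\nabla p^\varepsilon\cdot{\bf u}^\varepsilon$: it must be absorbed into $\|\nabla{\bf u}^\varepsilon\|_{L^2}^2$ with the right power of $\eta_\varepsilon$. Since $u_1^\varepsilon$ vanishes on $\Gamma_0$, writing it on $\Sigma_i^\varepsilon$ as a vertical integral of $\partial_{x_2}u_1^\varepsilon$, applying Cauchy--Schwarz in $x_2$ and then averaging in $x_1$ yields $|\int_{\Sigma_i^\varepsilon}u_1^\varepsilon|\leq C\eta_\varepsilon^{3/2}\|\nabla u_1^\varepsilon\|_{L^2(\Omega^\varepsilon)}$, so the pressure contribution is of order $\eta_\varepsilon^{-1/2}\|\nabla{\bf u}^\varepsilon\|_{L^2}$ and can indeed be absorbed by Young's inequality. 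I expect this to be the main technical obstacle; once it is in place, the rescaled bounds (\ref{estim_sol_dil1})--(\ref{estim_sol_dil3}) follow immediately from (\ref{estim_sol1})--(\ref{estim_sol3}) by (\ref{dilatacion}).
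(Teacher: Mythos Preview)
Your overall strategy---treat $T^\varepsilon$ first (its energy identity decouples), then close the $({\bf u}^\varepsilon,w^\varepsilon)$ system by testing with the solutions themselves, then rescale---is the same as the paper's. Your argument for why the micropolar contribution $D\int\nabla^\perp w^\varepsilon\cdot\nabla T^\varepsilon\,T^\varepsilon$ vanishes is in fact cleaner than the paper's: the paper uses the vector identity $\nabla^\perp w\cdot\nabla\phi=-\nabla w\times\nabla\phi={\rm rot}(w\nabla\phi)-w\,{\rm rot}(\nabla\phi)$ and computes, whereas your observation that $\nabla^\perp w^\varepsilon\cdot{\bf n}$ is the tangential derivative of $w^\varepsilon$ (hence zero on $\Gamma_1^\varepsilon$) is more direct.

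Two points need correction. First, the coupling terms in the momentum and angular-momentum identities do \emph{not} cancel. With the signs in (\ref{system_1}) both $\frac{2N}{1-N}\int\nabla^\perp w^\varepsilon\cdot{\bf u}^\varepsilon$ and $\frac{2N}{1-N}\int{\rm rot}({\bf u}^\varepsilon)w^\varepsilon$ appear on the right-hand side with the same sign, and the identity $\int\nabla^\perp w^\varepsilon\cdot{\bf u}^\varepsilon=\int{\rm rot}({\bf u}^\varepsilon)w^\varepsilon$ shows they \emph{add}. The resulting term $\frac{4N}{1-N}\int{\rm rot}({\bf u}^\varepsilon)w^\varepsilon$ can still be absorbed via Poincar\'e (\ref{Poincare}) and Young's inequality since $|\int{\rm rot}({\bf u}^\varepsilon)w^\varepsilon|\le C\eta_\varepsilon\|\nabla{\bf u}^\varepsilon\|_{L^2}\|\nabla w^\varepsilon\|_{L^2}$, so your combined approach works after this fix. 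The paper instead proceeds in two steps: it first extracts from the angular-momentum identity the preliminary bound $\|\nabla w^\varepsilon\|_{L^2}\le C(\eta_\varepsilon\|\nabla{\bf u}^\varepsilon\|_{L^2}+\eta_\varepsilon^{-1/2})$, and then feeds this into the momentum identity to close.

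Second, your handling of the pressure boundary contribution is not quite right as written: ``averaging in $x_1$'' makes no sense for a quantity defined on $\Sigma_i^\varepsilon=\{x_1=i\}$, and a bare trace bound $\int_0^{h_\varepsilon(i)}|\partial_{x_2}u_1^\varepsilon(i,\cdot)|^2\le\|\nabla u_1^\varepsilon\|_{L^2(\Omega^\varepsilon)}^2$ does not hold without extra regularity. The missing ingredient is ${\rm div}({\bf u}^\varepsilon)=0$: since ${\bf u}^\varepsilon=0$ on $\Gamma_0\cup\Gamma_1^\varepsilon$, the flux $\int_0^{h_\varepsilon(x_1)}u_1^\varepsilon(x_1,\cdot)\,dx_2$ is independent of $x_1$, so $\int_{\Sigma_i^\varepsilon}u_1^\varepsilon=\int_{\Omega^\varepsilon}u_1^\varepsilon$ (recall $|\omega|=1$). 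This converts the lateral boundary term into an interior integral and yields exactly $C\eta_\varepsilon^{-1/2}\|\nabla{\bf u}^\varepsilon\|_{L^2}$ as you claimed. The paper achieves the same thing by writing the boundary contribution as $\eta_\varepsilon^{-2}\int_{\Omega^\varepsilon}{\rm div}\!\big((q_{-1/2}+(q_{1/2}-q_{-1/2})(x_1+\tfrac12)){\bf u}^\varepsilon\big)\,dx$.
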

\begin{proof} We divide the proof in four steps. \\

\noindent {\it Step 1}. First, we multiply (\ref{system_1})$_3$ by $w^\ep$, integrate over  $\Omega^\ep$ to obtain
\begin{equation}\label{estim_proof}
\begin{array}{l}
\displaystyle L\int_{\Omega^\varepsilon}|\nabla w^\varepsilon|^2\,dx + {4N\over 1-N}\int_{\Omega^\varepsilon}|w^\varepsilon|^2\,dx\\
\noame
\displaystyle \qquad =-{M\over Pr}\int_{\Omega^\varepsilon}({\bf u}^\varepsilon \cdot \nabla w^\varepsilon)w^\varepsilon\,dx+{2N\over 1-N}\int_{\Omega^\varepsilon} {\rm rot}({\bf u}^\varepsilon) w^\varepsilon\,dx+{1\over\eta_\varepsilon^2}\int_{\Omega^\varepsilon}g\,w^\varepsilon\,dx.
\end{array}
\end{equation}
For the first term on the right-hand side, since ${\rm div}({\bf u}^\varepsilon)=0$ and $w^\varepsilon=0$ on $\partial\Omega^\varepsilon$, we get
\begin{equation}\label{estim_proof1}\int_{\Omega^\varepsilon}({\bf u}^\varepsilon \cdot \nabla w^\varepsilon)w^\varepsilon\,dx={1\over 2}\int_{\Omega^\varepsilon}{\bf u}^\varepsilon \cdot \nabla |w^\varepsilon|^2\,dx=-{1\over 2}\int_{\Omega^\varepsilon}|w^\varepsilon|^2{\rm div}({\bf u}^\varepsilon)\,dx=0.
\end{equation}
For the rest of the terms of the right-hand side, using the Cauchy-Schwarz inequality and the Poincar\'e inequality   (\ref{Poincare}), we get
\begin{equation}\label{estim_proof2}
\begin{array}{rl}
\displaystyle \left|\int_{\Omega^\varepsilon} {\rm rot}({\bf u}^\varepsilon) w^\varepsilon\,dx\right|&\leq \|\nabla {\bf u}^\varepsilon\|_{L^2(\Omega^\varepsilon)^{2\times 2}}\|w^\varepsilon\|_{L^2(\Omega^\varepsilon)}\leq C\eta_\varepsilon \|\nabla {\bf u}^\varepsilon\|_{L^2(\Omega^\varepsilon)^{2\times 2}}\|\nabla w^\varepsilon\|_{L^2(\Omega^\varepsilon)^{2 }},\\
\\
\displaystyle \left|{1\over \eta_\varepsilon^2}\int_{\Omega^\varepsilon} g\, w^\varepsilon\,dx_1dx_2\right|&\leq   \eta_\varepsilon^{-2}\|g\|_{L^2(\Omega^\varepsilon)}\|w^\varepsilon\|_{L^2(\Omega^\varepsilon)}\leq C\eta_\varepsilon^{-{1\over 2}}\|\nabla w^\varepsilon\|_{L^2(\Omega^\varepsilon)^{2 }}.
\end{array}
\end{equation}
Then,  taking into account (\ref{estim_proof1}) and (\ref{estim_proof2}) in (\ref{estim_proof}), we get
\begin{equation}\label{estim_proof3}
\|\nabla w^\varepsilon\|_{L^2(\Omega^\varepsilon)}\leq C\left(\eta_\varepsilon \|\nabla{\bf u}^\varepsilon\|_{L^2(\Omega^\varepsilon)}+\eta_\varepsilon^{-{1\over 2}}\right).
\end{equation}
{\it Step 2}. We multiply (\ref{system_1})$_4$ by $T^\ep$, integrate over  $\Omega^\ep$ to obtain
\begin{equation}\label{estim_proof4}
\begin{array}{l}
\displaystyle \int_{\Omega^\varepsilon}|\nabla T^\varepsilon|^2\,dx+\eta_\varepsilon k\int_{\Gamma^\varepsilon_1}|T^\varepsilon|^2\,d\sigma\\
\noame
\qquad\displaystyle
= - \int_{\Omega^\varepsilon}({\bf u}^\varepsilon\cdot \nabla)T^\varepsilon T^\varepsilon\,dx  +D\int_{\Omega^\varepsilon}(\nabla^\perp w^\varepsilon\cdot \nabla)T^\varepsilon T^\varepsilon\,dx +\eta_ \varepsilon k \int_{\Gamma^\varepsilon_1}G^\varepsilon T^\varepsilon\,d\sigma
.\end{array}
\end{equation}
For the first term on the right-hand side of (\ref{estim_proof4}), since ${\bf u}^\varepsilon=0$ on $\partial\Omega^\varepsilon$, ${\rm div}({\bf u}^\varepsilon)=0$ in $\Omega^\varepsilon$ and $T^\varepsilon=0$ on $\Sigma_i^\varepsilon$, $i=-1/2, 1/2$, we get
\begin{equation}\label{estim_proof5}
\int_{\Omega^\varepsilon}({\bf u}^\varepsilon\cdot \nabla)T^\varepsilon T^\varepsilon\,dx={1\over 2}\int_{\Omega^\varepsilon}{\bf u}^\varepsilon\cdot \nabla |T^\varepsilon|^2dx=-{1\over 2}\int_{\Omega^\varepsilon}|T^\varepsilon|^2{\rm div}{\bf u}^\varepsilon\,dx=0.
\end{equation}
For the  second term on the right-hand side of (\ref{estim_proof4}), we have
\begin{equation}\label{estim_proof6}
\begin{array}{l}
\displaystyle \int_{\Omega^\varepsilon}\nabla^\perp w^\varepsilon\cdot \nabla T^\varepsilon T^\varepsilon\,dx  \displaystyle =
{1\over 2}\int_{\Omega^\varepsilon}\nabla^\perp w^\varepsilon \cdot \nabla(T^\varepsilon)^2dx =-{1\over 2}\int_{\Omega^\varepsilon}\nabla w^\varepsilon \times (\nabla (T^\varepsilon)^2)\,dx \\
\\
\qquad \displaystyle ={1\over 2}\int_{\Omega^\varepsilon} w^\varepsilon {\rm rot}(\nabla (T^\varepsilon)^2)dx -{1\over 2}\int_{\Omega^\varepsilon} {\rm rot}(w^\varepsilon \nabla (T^\varepsilon)^2)dx \\
\\
\qquad\displaystyle =-{1\over 2}\int_{\Omega^\varepsilon}{\bf n}\times (w^\varepsilon \nabla (T^\varepsilon)^2)dx =-{1\over 2}\int_{\Omega^\varepsilon}w^\varepsilon\left({\partial T^\varepsilon\over \partial x_2} T^\varepsilon n_1-{\partial T^\varepsilon\over \partial x_1} T^\varepsilon n_2\right)dx =0,
\end{array}
\end{equation}
where we have used that $w^\varepsilon=0$ on $\partial\Omega^\varepsilon$, the identity
$${\rm rot}(\nabla (T^\varepsilon)^2)={\rm rot}\left(2{\partial T^\varepsilon\over \partial x_1} T^\varepsilon, 2{\partial T^\varepsilon\over \partial x_2}T^\varepsilon\right)=2{\partial^2 T^\varepsilon\over \partial x_2\partial x_1}T^\varepsilon+2{\partial T^\varepsilon\over \partial x_2}{\partial T^\varepsilon\over \partial x_1}-2{\partial^2 T^\varepsilon\over \partial x_1\partial x_2}T^\varepsilon-2{\partial T^\varepsilon\over \partial x_1}{\partial T^\varepsilon\over \partial x_2}=0.
$$
and
$$\int_{\Omega^\varepsilon}{\partial w^\varepsilon\over \partial x_2}T^\varepsilon\,dx_1dx_2=-\int_{\Omega^\varepsilon}w^\varepsilon{\partial T^\varepsilon\over \partial x_2}dx_1dx_2.$$

\noindent It remains to estimate the third term of the right-hand side of   (\ref{estim_proof4}). To do this, from Caychy-Schwarz's inequality, Lemma \ref{trace_lemma} applied  to $T^\varepsilon$ and Corollary \ref{G_lemma}, we get
\begin{equation}\label{estim_proof7}\left|\eta_\varepsilon k \int_{\Gamma_1^\varepsilon}G^\varepsilon T^\varepsilon\,d\sigma\right|\leq \eta_\varepsilon\|G^\varepsilon \|_{L^2(\Gamma_1^\varepsilon)}\|T^\varepsilon\|_{L^2(\Gamma_1^\varepsilon)}\leq C  \eta_\ep^{3\over 2}\|\nabla T^\varepsilon\|_{L^2(\Omega^\varepsilon)^2}.
\end{equation}
Then,  taking into account (\ref{estim_proof5}) - (\ref{estim_proof7}) in (\ref{estim_proof4}), we have
\begin{equation}\label{estim_proof8}
\|\nabla T^\varepsilon\|_{L^2(\Omega^\varepsilon)}\leq C\eta_\varepsilon^{3\over 2},
\end{equation}
which is the second estimate in (\ref{estim_sol3}). From the Poincar\'e inequality (\ref{Poincare}), we   get the first estimate in (\ref{estim_sol3}).
\\

\noindent {\it Step 3}. We multiply (\ref{system_1})$_1$ by ${\bf u}^\ep$, integrate over  $\Omega^\ep$ to obtain
\begin{equation}\label{estim_proof9}
 \begin{array}{rl}
\displaystyle {1\over 1-N}\int_{\Omega_\ep}|\nabla{\bf u}^\varepsilon|^2\,dx = &\displaystyle -{1\over Pr}\int_{\Omega^\varepsilon}({\bf u}^\varepsilon\cdot \nabla){\bf u}^\varepsilon{\bf u}^\varepsilon\,dx  +{2N\over 1-N}\int_{\Omega^\varepsilon}\nabla^\perp w^\varepsilon \cdot {\bf u}^\varepsilon \,dx
\\
\noame &
\displaystyle
+Ra\int_{\Omega^\varepsilon}T^\varepsilon ({\bf e}_2\cdot {\bf u}^\varepsilon)\,dx+{1\over \eta_\varepsilon^2}\int_{\Omega^\varepsilon}f_1 ({\bf e}_1\cdot {\bf u}^\varepsilon)\,dx,
\\
\noame &
\displaystyle
 +{1\over Pr}{1\over \eta_\varepsilon^2}q_{-1/2}\int_{\Sigma_{-1/2}^\varepsilon}\varphi\cdot {\bf e_1}\,dx_2-{1\over Pr}{1\over \eta_\varepsilon^2}q_{1/2}\int_{\Sigma_{1/2}^\varepsilon} \varphi\cdot {\bf e_1}\,dx_2.
\end{array}
\end{equation}
The first term on the right-han side of (\ref{estim_proof9}) satisfies
\begin{equation}\label{estim_proof09}
\int_{\Omega^\varepsilon}({\bf u}^\varepsilon\cdot \nabla){\bf u}^\varepsilon{\bf u}^\varepsilon\,dx_1dx_2={1\over 2}\int_{\Omega^\varepsilon}{\bf u}^\varepsilon \cdot \nabla |{\bf u}^\varepsilon|^2dx_1dx_2=-{1\over 2}\int_{\Omega^\varepsilon}|{\bf u}^\varepsilon|^2{\rm div}\,{\bf u}^\varepsilon\,dx_1dx_2=0.
\end{equation}
We estimate the rest of the terms on the right-hand side of (\ref{estim_proof9}) by using the Poincar\'e inequality (\ref{Poincare}) and using (\ref{estim_proof3}), we get
\begin{equation}\label{estim_proof10}
\begin{array}{rl}
\displaystyle \left|\int_{\Omega^\varepsilon}\nabla^\perp w^\varepsilon{\bf u}^\varepsilon \,dx \right|&\leq \|\nabla w^\varepsilon\|_{L^2(\Omega^\varepsilon)}\|{\bf u}^\varepsilon\|_{L^2(\Omega^\varepsilon)}\leq C\eta_\varepsilon\|\nabla w^\varepsilon\|_{L^2(\Omega^\varepsilon)^{2}}\|\nabla {\bf u}^\varepsilon\|_{L^2(\Omega^\varepsilon)^{2\times 2}}\\
&\leq C\eta_\varepsilon^{2}\|\nabla {\bf u}^\varepsilon\|^2_{L^2(\Omega^\varepsilon)^{2\times 2}}+\eta^{1\over 2}_\varepsilon\|\nabla {\bf u}^\varepsilon\|_{L^2(\Omega^\varepsilon)^{2\times 2}},\\
\\
 \displaystyle \left|\int_{\Omega^\varepsilon}T^\varepsilon({\bf e}_2\cdot {\bf u}^\varepsilon)\,dx \right|&\leq \|T^\varepsilon\|_{L^2(\Omega^\varepsilon)}\|{\bf u}^\ep\|_{L^2(\Omega^\varepsilon)^2}\leq C\eta_\varepsilon \|T^\varepsilon\|_{L^2(\Omega^\varepsilon)}\|\nabla {\bf u}^\ep\|_{L^2(\Omega^\varepsilon)^{2\times 2}}
\\
& \leq C\eta_\varepsilon^{7\over 2}\|\nabla {\bf u}^\ep\|_{L^2(\Omega^\varepsilon)^{2\times 2}},
\\
\\
\displaystyle \left|\eta_\ep^{-2}\int_{\Omega^\varepsilon}f_1{\bf u}^\varepsilon\,dx \right|&\leq  \eta_\varepsilon^{-2}\|f_1\|_{L^2(\Omega^\varepsilon)}\|{\bf u}^\varepsilon\|_{L^2(\Omega^\varepsilon)^2}\leq C\eta_\varepsilon^{-{1\over 2}}\|\nabla{\bf u}^\varepsilon\|_{L^2(\Omega^\varepsilon)^{2\times 2}},
\end{array}
\end{equation}
and
\begin{equation}\label{aaa}
\begin{array}{rl}
\displaystyle {1\over \eta_\varepsilon^2}{1\over Pr}\left|q_{-1/2}\int_{\Sigma_{-1/2}^\varepsilon}{\bf u}^\ep\cdot {\bf e_1}\,dx_2-q_{1/2}\int_{\Sigma_{1/2}^\varepsilon} {\bf u}^\ep\cdot {\bf e_1}\,dx_2\right|
&\displaystyle =|{\rm div}((q_{-1/2}+(q_{1/2}-q_{-1/2})(x+1/2)){\bf u}^\varepsilon|\\
\\
&\displaystyle\leq C \eta_\varepsilon^{-2}\|1\|_{L^2(\Omega^\varepsilon)}\|{\bf u}^\varepsilon\|_{L^2(\Omega^\varepsilon)^2}\leq C\varepsilon^{-{1\over 2}}\|\nabla {\bf u}^\varepsilon\|_{L^2(\Omega^\varepsilon)^{2\times 2}}.
\end{array}
\end{equation}
Taking into account  (\ref{estim_proof09}) and estimates (\ref{estim_proof10}) in (\ref{estim_proof9}), we get
$$\|\nabla{\bf u}^\varepsilon\|_{L^2(\Omega^\varepsilon)^{2\times 2}}\leq C\eta_\varepsilon^{-{1\over 2}},$$
which is the second estimate (\ref{estim_sol1}). By using the Poincar\'e inequality, we get the first estimate in  (\ref{estim_sol1}). For the microrotation, from (\ref{estim_proof3}), we get the estimates of the microrotation (\ref{estim_sol2}).\\

\noindent {\it Step 4.} Finally, the estimates of the rescaled unknown  are obtained by applying to estimates of the unknowns the change of variables (\ref{dilatacion}).

\end{proof}

\subsection{The extension of $(\tilde {\bf u}^\varepsilon, \tilde w^\varepsilon, \tilde T^\varepsilon)$  to the whole domain}
The sequence of solutions $(\tilde{\bf u}^\ep, \tilde w^\ep, \tilde T^\varepsilon)$ is defined in a varying set $\widetilde\Omega^\ep$, but not defined in a fixed domain independent of $\ep$. In order to pass to the limit if $\ep$ tends to zero, convergences in fixed Sobolev spaces (defined in $\Omega$) are used, which requires first that $(\tilde{\bf  u}^\ep, \tilde w^\ep, \tilde T^\varepsilon)$ be extended to the whole domain $\Omega$. We extend each unknown in the following:

\begin{itemize}
\item[--] From the boundary conditions satisfied by $\tilde{\bf u}^\varepsilon$ and $\tilde w^{\varepsilon}$, we extend them  by zero in $\Omega\setminus \widetilde{\Omega}^{\varepsilon}$ and denote the extensions by  $ \tilde{\bf U}^\varepsilon$ and $\tilde W^{\varepsilon}$, respectively.

\item[--]   For the temperature $\tilde T^\varepsilon$, we use the extension operator described in \cite[Lemma 2.3]{Pereira_p_laplace} called $\mathcal{P}^\varepsilon$ which allows us to extend functions from $H^1(\widetilde\Omega^\varepsilon)$, which are zero on the lateral boundaries, to $H^1(\Omega)$. Moreover,  this extension satisfies
\begin{equation}\label{Extension_op}\begin{array}{l}
\|\mathcal{P}^\varepsilon(\tilde \varphi)\|_{L^2(\Omega)}\leq C\|\tilde \varphi\|_{L^2(\widetilde\Omega^\varepsilon)},\\
\noame
\displaystyle \|\partial_{x_1} \mathcal{P}^\varepsilon (\tilde \varphi)\|_{L^2(\Omega)}\leq C\left(\|\partial_{x_1} \tilde \varphi\|_{L^2(\widetilde \Omega^\varepsilon)}+{1\over \varepsilon} \|\partial_{z_2}\tilde \varphi\|_{L^2(\widetilde \Omega^\varepsilon)}\right),\\
\noame
\|\partial_{z_2} \mathcal{P}^\varepsilon (\tilde \varphi)\|_{L^2(\Omega)}\leq C \|\partial_{z_2}\tilde \varphi\|_{L^2(\widetilde \Omega^\varepsilon)},
\end{array}
\end{equation}
for every function $\tilde \varphi\in H^1(\widetilde\Omega_\varepsilon)$. Thus, we denote by $\tilde \theta^\varepsilon$ the extension of $\tilde T^\varepsilon$, i.e. $\tilde \theta^\varepsilon=\mathcal{P}^\varepsilon (\tilde T^\varepsilon)$. \\
\end{itemize}
We have the following result.
\begin{lemma}[Estimates of extended functions]\label{lemma_estimates2} The extended functions $(\tilde U^\ep, \tilde W^\ep, \tilde \theta^\varepsilon)$ of $(\tilde u^\ep, \tilde w^\ep, \tilde T^\varepsilon)$ satisfy the following estimates
\begin{eqnarray}
\displaystyle
&&\|\tilde{\bf U}^\varepsilon\|_{L^2(  \Omega )^2}\leq C , \quad \displaystyle
\|\nabla_{\eta_\varepsilon}  \tilde{\bf U}^\varepsilon\|_{L^2( \Omega )^{2\times 2}}\leq C\eta_\varepsilon^{-1}, \label{estim_sol_dil_ext_u}
\medskip
\\
\displaystyle
&&\|\tilde W^\varepsilon\|_{L^2( \Omega)}\leq C, \quad \displaystyle
\|\nabla_{\eta_\varepsilon} \tilde W^\varepsilon\|_{L^2( \Omega )^{2}}\leq C\eta_\varepsilon^{-1}, \label{estim_sol_dil_ext_w}
\medskip
\\
&& \|\tilde \theta^\varepsilon \|_{L^2 (\Omega )}\leq C\eta_\varepsilon^{2}, \quad     \displaystyle\|\nabla_{\eta_\varepsilon} \tilde \theta^\varepsilon \|_{L^2( \Omega )^{2}}\leq C\eta_\varepsilon. \label{estim_sol_dil_ext_T}
\end{eqnarray}

\end{lemma}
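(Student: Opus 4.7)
The plan is to treat the three unknowns separately, since the velocity and microrotation are extended simply by zero whereas the temperature requires the nontrivial extension operator $\mathcal{P}^\varepsilon$.

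For $\tilde{\bf U}^\varepsilon$ and $\tilde W^\varepsilon$, the estimates are immediate. Because $\tilde{\bf u}^\varepsilon = 0$ on $\Gamma_0 \cup \widetilde\Gamma_1^\varepsilon$ and $\tilde{\bf u}^\varepsilon \cdot {\bf e}_2 = 0$ on $\widetilde\Sigma_i^\varepsilon$, and similarly $\tilde w^\varepsilon = 0$ on the boundary pieces where extension is needed, extending by zero produces functions in $H^1(\Omega)$ (respectively $H^1$) with $\|\tilde{\bf U}^\varepsilon\|_{L^2(\Omega)^2} = \|\tilde{\bf u}^\varepsilon\|_{L^2(\widetilde\Omega^\varepsilon)^2}$ and $\|\nabla_{\eta_\varepsilon}\tilde{\bf U}^\varepsilon\|_{L^2(\Omega)^{2\times 2}} = \|\nabla_{\eta_\varepsilon}\tilde{\bf u}^\varepsilon\|_{L^2(\widetilde\Omega^\varepsilon)^{2\times 2}}$, and analogously for $\tilde W^\varepsilon$. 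Then \eqref{estim_sol_dil_ext_u} and \eqref{estim_sol_dil_ext_w} follow directly from \eqref{estim_sol_dil1} and \eqref{estim_sol_dil2}.

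For the temperature, I would apply the three inequalities in \eqref{Extension_op} to $\tilde\theta^\varepsilon = \mathcal{P}^\varepsilon(\tilde T^\varepsilon)$ and combine them with the rescaled estimates \eqref{estim_sol_dil3}. The $L^2$ bound is immediate: $\|\tilde\theta^\varepsilon\|_{L^2(\Omega)} \leq C\|\tilde T^\varepsilon\|_{L^2(\widetilde\Omega^\varepsilon)} \leq C\eta_\varepsilon^2$. For the vertical derivative, the third line of \eqref{Extension_op} gives $\|\partial_{z_2}\tilde\theta^\varepsilon\|_{L^2(\Omega)} \leq C\|\partial_{z_2}\tilde T^\varepsilon\|_{L^2(\widetilde\Omega^\varepsilon)} \leq C\eta_\varepsilon^2$ (since $\|\nabla_{\eta_\varepsilon}\tilde T^\varepsilon\|_{L^2(\widetilde\Omega^\varepsilon)^2} \leq C\eta_\varepsilon$ implies $\|\partial_{z_2}\tilde T^\varepsilon\|_{L^2(\widetilde\Omega^\varepsilon)} \leq C\eta_\varepsilon^2$), whence $\eta_\varepsilon^{-1}\|\partial_{z_2}\tilde\theta^\varepsilon\|_{L^2(\Omega)} \leq C\eta_\varepsilon$.

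The only delicate step is the horizontal derivative. The second line of \eqref{Extension_op} yields
\begin{equation*}
\|\partial_{x_1}\tilde\theta^\varepsilon\|_{L^2(\Omega)} \;\leq\; C\Bigl(\|\partial_{x_1}\tilde T^\varepsilon\|_{L^2(\widetilde\Omega^\varepsilon)} + \tfrac{1}{\varepsilon}\|\partial_{z_2}\tilde T^\varepsilon\|_{L^2(\widetilde\Omega^\varepsilon)}\Bigr) \;\leq\; C\Bigl(\eta_\varepsilon + \tfrac{\eta_\varepsilon^2}{\varepsilon}\Bigr).
\end{equation*}
Here the key observation is that the standing hypothesis \eqref{cases} — namely $\eta_\varepsilon \approx \varepsilon$ or $\eta_\varepsilon \ll \varepsilon$ — guarantees $\eta_\varepsilon/\varepsilon \leq C$, so the second term is absorbed into the first and $\|\partial_{x_1}\tilde\theta^\varepsilon\|_{L^2(\Omega)} \leq C\eta_\varepsilon$. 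Combining with the vertical bound gives \eqref{estim_sol_dil_ext_T}. This rescaling ratio $\eta_\varepsilon/\varepsilon$ is the only place the hypothesis $\eta_\varepsilon \lesssim \varepsilon$ enters, and it is the step I expect to be the main (though minor) obstacle; the rest is a direct transcription of the a priori bounds already established in Lemma~\ref{lemma_estimates}. \hfill$\Box$
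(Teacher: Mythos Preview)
Your proposal is correct and follows exactly the approach of the paper: zero extension for $\tilde{\bf U}^\varepsilon$ and $\tilde W^\varepsilon$ (immediate from \eqref{estim_sol_dil1}--\eqref{estim_sol_dil2}), and the extension operator bounds \eqref{Extension_op} combined with \eqref{estim_sol_dil3} for $\tilde\theta^\varepsilon$. The paper's own proof is a two-line sketch, so your version actually supplies the details --- in particular the use of $\eta_\varepsilon/\varepsilon\le C$ to control the $\varepsilon^{-1}\|\partial_{z_2}\tilde T^\varepsilon\|$ term --- that the paper leaves implicit.
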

\begin{proof}  Estimates for the extension of $\tilde{\bf U}^\varepsilon$ and $\tilde W^\varepsilon$ are obtained straightforward from  (\ref{estim_sol_dil1}) and (\ref{estim_sol_dil2}), respectively.  For the extension of the temperature $\tilde \theta^\varepsilon$, from (\ref{estim_sol_dil3})  and (\ref{Extension_op}), we deduce (\ref{estim_sol_dil_ext_T}).
\\
\end{proof}
\subsection{Estimates for pressure}
Let us first give a more accurate estimate for pressure $p^\varepsilon$. For this, we need to recall a version of the decomposition result for $p^\varepsilon$ whose proof can be found in \cite[Corollary 3.4]{CLS2}  (see also \cite{Bonn_Paz_SG, CLS1}).
\begin{proposition}\label{prop_decomposition}
The following decomposition for $p^\varepsilon\in L^2_0(\Omega^\varepsilon)$ holds
\begin{equation}\label{decompositionp}
p^\varepsilon=p^\varepsilon_0+p^\varepsilon_1,
\end{equation}
where $p^\varepsilon_0\in H^1(\omega)$, which is independent of $x_2$, and $p^\varepsilon_1\in L^2(\Omega^\varepsilon)$. Moreover, the following estimates hold
$$\eta_\ep^{3\over 2}\|p^\varepsilon_0\|_{H^1(\omega)}+\|p^\varepsilon_1\|_{L^2(\Omega^\varepsilon)}\leq C\|\nabla p^\varepsilon\|_{H^{-1}(\Omega^\varepsilon)^2},$$
that is
\begin{equation}\label{decomposition_estimates}
\|p^\varepsilon_0\|_{H^1(\omega)}\leq C\eta_\varepsilon^{-{3\over 2}}\|\nabla p^\varepsilon\|_{H^{-1}(\Omega^\varepsilon)^2},\quad
\|p^\varepsilon_1\|_{L^2(\Omega^\varepsilon)}\leq C\|\nabla p^\varepsilon\|_{H^{-1}(\Omega^\varepsilon)^2}.
\end{equation}
\end{proposition}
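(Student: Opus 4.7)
I would follow the duality strategy developed in \cite[Corollary 3.4]{CLS2} (see also \cite{CLS1,Bonn_Paz_SG}), adapted to the present rough thin geometry. The natural candidate decomposition is
\begin{equation*}
p_0^\varepsilon(x_1)=\frac{1}{h_\varepsilon(x_1)}\int_0^{h_\varepsilon(x_1)}p^\varepsilon(x_1,x_2)\,dx_2,\qquad p_1^\varepsilon=p^\varepsilon-p_0^\varepsilon,
\end{equation*}
making $p_1^\varepsilon$ vanish in mean on every vertical slice and transferring the hypothesis $p^\varepsilon\in L^2_0(\Omega^\varepsilon)$ into the weighted orthogonality $\int_\omega h_\varepsilon p_0^\varepsilon\,dx_1=0$. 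Both estimates in \eqref{decomposition_estimates} will then be obtained by testing $\nabla p^\varepsilon\in H^{-1}(\Omega^\varepsilon)^2$ against carefully chosen lifts $V\in H^1_0(\Omega^\varepsilon)^2$.

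For $p_1^\varepsilon$ I would invoke a Bogovskii-type right inverse of the divergence: because $p_1^\varepsilon$ has vanishing mean on each vertical slice, one can build $\psi^\varepsilon\in H^1_0(\Omega^\varepsilon)^2$ with $\mathrm{div}\,\psi^\varepsilon=p_1^\varepsilon$ and $\|\nabla\psi^\varepsilon\|_{L^2(\Omega^\varepsilon)^{2\times 2}}\le C\|p_1^\varepsilon\|_{L^2(\Omega^\varepsilon)}$, with $C$ independent of $\varepsilon$. The slice zero-mean condition is crucial here to avoid the $\varepsilon$-degeneracy of a generic Bogovskii inverse on the thin strip. Then
\begin{equation*}
\|p_1^\varepsilon\|_{L^2(\Omega^\varepsilon)}^2=\int_{\Omega^\varepsilon}p^\varepsilon\,\mathrm{div}\,\psi^\varepsilon\,dx=-\langle\nabla p^\varepsilon,\psi^\varepsilon\rangle_{H^{-1},H^1_0}\le C\|\nabla p^\varepsilon\|_{H^{-1}(\Omega^\varepsilon)^2}\|p_1^\varepsilon\|_{L^2(\Omega^\varepsilon)},
\end{equation*}
which gives the second bound in \eqref{decomposition_estimates}.

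For $p_0^\varepsilon$ I would use a dual \emph{restriction-operator} construction: for every $\varphi\in H^1_0(\omega)$ one builds a lift $V_\varphi^\varepsilon\in H^1_0(\Omega^\varepsilon)^2$ whose vertical-integrated divergence is prescribed in terms of $\varphi$ and $h_\varepsilon$, together with a sharp gradient bound tracked in both $\eta_\varepsilon$ and $\varepsilon$. Dualising the identity
\begin{equation*}
\int_{\Omega^\varepsilon}p^\varepsilon\,\mathrm{div}\,V_\varphi^\varepsilon\,dx=-\langle\nabla p^\varepsilon,V_\varphi^\varepsilon\rangle_{H^{-1},H^1_0},
\end{equation*}
splitting $p^\varepsilon=p_0^\varepsilon+p_1^\varepsilon$, applying Fubini on the $p_0^\varepsilon$-term, and absorbing the $p_1^\varepsilon$-contribution via the bound just obtained, isolates (after one integration by parts on $\omega$) a duality bound on $(p_0^\varepsilon)'$ tested against $\varphi$. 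The Fubini factor $\eta_\varepsilon^{1/2}$ coming from slices of width $\sim\eta_\varepsilon$, together with the sharp cell-by-cell scaling of the lift on the oscillating geometry, produces the prefactor $\eta_\varepsilon^{-3/2}$ in front of $\|\nabla p^\varepsilon\|_{H^{-1}}$. A Poincaré inequality on $\omega$, combined with the weighted zero-mean condition $\int_\omega h_\varepsilon p_0^\varepsilon\,dx_1=0$, then upgrades the bound on $(p_0^\varepsilon)'$ to the full $H^1(\omega)$ bound, giving the first estimate in \eqref{decomposition_estimates}.

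The main obstacle is precisely the construction of the restriction-type lift $V_\varphi^\varepsilon$ with the required slice-averaged divergence and a sharp gradient estimate respecting the simultaneous presence of the two small parameters $\eta_\varepsilon$ (thickness) and $\varepsilon$ (wavelength of the roughness); this is the content of the technical lemmas in \cite{CLS2,CLS1,Bonn_Paz_SG}, and the delicate interplay between the two scales is what yields the nontrivial weight $\eta_\varepsilon^{3/2}$ multiplying $\|p_0^\varepsilon\|_{H^1(\omega)}$ in the final estimate.
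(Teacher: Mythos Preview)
The paper does not actually prove this proposition: it merely recalls the result and points to \cite[Corollary 3.4]{CLS2} (and \cite{CLS1,Bonn_Paz_SG}) for the proof. Your proposal is precisely a sketch of the duality/Bogovskii argument developed in those references, so it is aligned with what the paper invokes and there is nothing to correct or contrast.
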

\noindent We denote by $\tilde p^\varepsilon_1$ the rescaled function associated with $p^\varepsilon_1$ defined by $\tilde p^\ep_1(x_1,z_2)=(x_1,\eta_\ep z_2)$ for a.e. $(x_1,z_2)\in \widetilde\Omega^\ep$. As consequence, we have the following result.
\begin{corollary}\label{Lem_estim_pressures} The pressures $p^\varepsilon_0$,  $p^\varepsilon_1$ and $\tilde p^\ep_1$ satisfy the following estimates
\begin{equation}\label{estim_p0p1}
\begin{array}{c}
\displaystyle
\|p^\varepsilon_0\|_{H^1(\omega)}\leq C\eta_\varepsilon^{-2},\\
\\
\displaystyle   \|p^\varepsilon_1\|_{L^2(\Omega^\varepsilon)}\leq C\eta_\ep^{-{1\over 2}},\quad  \|\tilde p^\varepsilon_1\|_{L^2(\widetilde \Omega^\varepsilon)}\leq C\eta_\ep^{-1}.
\end{array}
\end{equation}
\end{corollary}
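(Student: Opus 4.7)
The plan is to combine the decomposition of Proposition \ref{prop_decomposition} with an estimate of $\|\nabla p^\varepsilon\|_{H^{-1}(\Omega^\varepsilon)^2}$ obtained by isolating $\nabla p^\varepsilon$ in the momentum equation (\ref{system_1})$_1$. Once that key bound is established, the three claimed inequalities will follow from (\ref{decomposition_estimates}) and a change of variables in the vertical direction.

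\emph{Step 1: Bounding $\nabla p^\varepsilon$ in $H^{-1}(\Omega^\varepsilon)^2$.} From (\ref{system_1})$_1$, for every $\varphi\in H^1_0(\Omega^\varepsilon)^2$ I would write
\begin{equation*}
\langle\nabla p^\varepsilon,\varphi\rangle = -\langle({\bf u}^\varepsilon\cdot\nabla){\bf u}^\varepsilon,\varphi\rangle +Pr\,\Big\langle \tfrac{1}{1-N}\Delta{\bf u}^\varepsilon+\tfrac{2N}{1-N}\nabla^\perp w^\varepsilon +Ra\,T^\varepsilon{\bf e}_2+{\bf f}^\varepsilon,\,\varphi\Big\rangle,
\end{equation*}
and estimate each piece. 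For the diffusion term, integration by parts gives $|\langle\Delta{\bf u}^\varepsilon,\varphi\rangle|\leq \|\nabla{\bf u}^\varepsilon\|_{L^2}\|\nabla\varphi\|_{L^2}\leq C\eta_\varepsilon^{-1/2}\|\nabla\varphi\|_{L^2}$ by (\ref{estim_sol1}). For the microrotation curl, $|\langle\nabla^\perp w^\varepsilon,\varphi\rangle|=|\int w^\varepsilon\,{\rm rot}\,\varphi|\leq \|w^\varepsilon\|_{L^2}\|\nabla\varphi\|_{L^2}\leq C\eta_\varepsilon^{1/2}\|\nabla\varphi\|_{L^2}$ by (\ref{estim_sol2}). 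For the buoyancy term, (\ref{estim_sol3}) yields a bound of order $\eta_\varepsilon^{5/2}$. For the convection term, since ${\rm div}({\bf u}^\varepsilon)=0$ and ${\bf u}^\varepsilon=0$ on $\partial\Omega^\varepsilon$, integration by parts gives $|\langle({\bf u}^\varepsilon\cdot\nabla){\bf u}^\varepsilon,\varphi\rangle|\leq \|{\bf u}^\varepsilon\|_{L^4}^2\|\nabla\varphi\|_{L^2}$, and Ladyzhenskaya (\ref{Poincare}) combined with (\ref{estim_sol1}) gives $\|{\bf u}^\varepsilon\|_{L^4}^2\leq C\eta_\varepsilon\|\nabla{\bf u}^\varepsilon\|_{L^2}^2\leq C$. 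For the forcing term, (\ref{externalforces}) and Poincaré (\ref{Poincare}) yield $|\langle{\bf f}^\varepsilon,\varphi\rangle|\leq \eta_\varepsilon^{-2}\|f_1\|_{L^2(\Omega^\varepsilon)}\|\varphi\|_{L^2}\leq C\eta_\varepsilon^{-2}\cdot\eta_\varepsilon^{1/2}\cdot\eta_\varepsilon\|\nabla\varphi\|_{L^2}=C\eta_\varepsilon^{-1/2}\|\nabla\varphi\|_{L^2}$. Collecting these, the dominant contributions are of order $\eta_\varepsilon^{-1/2}$, so
\begin{equation*}
\|\nabla p^\varepsilon\|_{H^{-1}(\Omega^\varepsilon)^2}\leq C\eta_\varepsilon^{-1/2}.
\end{equation*}

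\emph{Step 2: Applying the decomposition and rescaling.} Plugging this bound into (\ref{decomposition_estimates}) directly produces
\begin{equation*}
\|p^\varepsilon_0\|_{H^1(\omega)}\leq C\eta_\varepsilon^{-3/2}\cdot\eta_\varepsilon^{-1/2}=C\eta_\varepsilon^{-2},\qquad \|p^\varepsilon_1\|_{L^2(\Omega^\varepsilon)}\leq C\eta_\varepsilon^{-1/2},
\end{equation*}
which are the first two assertions. For the rescaled pressure $\tilde p^\varepsilon_1$, the change of variable $z_2=x_2/\eta_\varepsilon$ in the $L^2$ integral gives $\|\tilde p^\varepsilon_1\|_{L^2(\widetilde\Omega^\varepsilon)}^2=\eta_\varepsilon^{-1}\|p^\varepsilon_1\|_{L^2(\Omega^\varepsilon)}^2$, from which the bound $\|\tilde p^\varepsilon_1\|_{L^2(\widetilde\Omega^\varepsilon)}\leq C\eta_\varepsilon^{-1}$ follows at once.

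\emph{Main obstacle.} The only delicate point is matching the correct powers of $\eta_\varepsilon$ when estimating the convective and forcing terms: one must use the Ladyzhenskaya inequality (\ref{Poincare}) rather than $L^2$-$L^2$ duality for the nonlinearity, and one must use Poincaré (\ref{Poincare}) before applying Cauchy--Schwarz to the forcing, otherwise the singular prefactor $\eta_\varepsilon^{-2}$ in (\ref{externalforces}) would spoil the bound. Every other step reduces to direct application of Lemma \ref{lemma_estimates} and Proposition \ref{prop_decomposition}.
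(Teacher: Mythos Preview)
Your proposal is correct and follows essentially the same route as the paper: both isolate $\nabla p^\varepsilon$ from the momentum equation, estimate each term against $\|\nabla\varphi\|_{L^2}$ using Lemma~\ref{lemma_estimates} together with the Poincar\'e/Ladyzhenskaya inequalities to obtain $\|\nabla p^\varepsilon\|_{H^{-1}(\Omega^\varepsilon)^2}\leq C\eta_\varepsilon^{-1/2}$, and then feed this into (\ref{decomposition_estimates}) and the vertical rescaling. The only cosmetic differences are that the paper keeps the derivative on $w^\varepsilon$ in the microrotation term (applying Poincar\'e to $\varphi$) and handles the convective term via $\|{\bf u}^\varepsilon\|_{L^4}\|\nabla{\bf u}^\varepsilon\|_{L^2}\|\varphi\|_{L^4}$ rather than your integration-by-parts variant, but the resulting orders are identical.
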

\begin{proof} Thank to (\ref{decomposition_estimates}), we just need to obtain the estimate for $\nabla p^\ep$ given by
\begin{equation}\label{estim_nabla_p}
\|\nabla  p^\ep\|_{H^{-1}(\Omega^\varepsilon)^2}\leq C\eta_\ep^{-{1\over 2}},
\end{equation}
to derive (\ref{estim_p0p1}).
To do this, we consider $\varphi\in H^1_0(\Omega^\varepsilon)$, and taking into account the variational formulation (\ref{form_var_u}), we get
\begin{equation}\label{equality_duality_0}
\begin{array}{rl}
\displaystyle
\left\langle \nabla p^\varepsilon,\varphi\right\rangle=&\displaystyle
- {Pr\over 1-N}\int_{\Omega_\varepsilon}\nabla {\bf u}_\ep : \nabla \varphi\,dx -\int_{\Omega^\varepsilon} ({\bf u}^\varepsilon\cdot \nabla){\bf u}^\varepsilon \varphi\,dx\\
\noame &\displaystyle
+{2N\,Pr\over 1-N}\int_{\Omega^\varepsilon}\nabla^\perp w^\varepsilon \cdot  \varphi\,dx+Pr\,Ra\int_{\Omega^\varepsilon}  T^\varepsilon({\bf e}_2\cdot  \varphi)\,dx\\
\noame &\displaystyle
+{Pr\over \eta_\varepsilon^2}\int_{\Omega^\varepsilon}f_1 ({\bf e}_1\cdot  \varphi)\,dx.
\end{array}\end{equation}
Estimating the terms on the right-hand side of  (\ref{equality_duality_0}) using Lemmas \ref{Poincare_lemma} and \ref{lemma_estimates}, we get
$$\begin{array}{rcl}
\displaystyle\left|{Pr\over 1-N}\int_{\Omega_\varepsilon}\nabla {\bf    u}_\ep : \nabla \varphi\,dx \right| &\leq &\displaystyle
C\|\nabla{\bf  u}^\varepsilon\|_{L^2( \Omega^\varepsilon)^{2\times 2}}\|\nabla\varphi\|_{L^2(\Omega^\varepsilon)^{2\times 2}}\leq C\eta_\varepsilon^{-{1\over 2}}\|  \varphi\|_{H^1_0( \Omega^\varepsilon)^2},\\
\\
\displaystyle
 \left|\int_{ \Omega^\varepsilon} ({\bf  u}^\varepsilon\cdot \nabla){\bf   u}^\varepsilon  \varphi\,dx\right| &\leq   & \displaystyle\|{\bf  u}^\varepsilon\|_{L^4(\Omega^\varepsilon)^2}\|\nabla {\bf   u}^\varepsilon\|_{L^2(\Omega^\varepsilon)^{2\times 2}}\|\varphi\|_{L^4( \Omega^\varepsilon)^2}\\
\noame
  &\leq  &\displaystyle  C\eta_\ep\|\nabla{\bf  u}^\varepsilon\|^2_{L^2(\Omega^\varepsilon)^{2\times 2}}\|\nabla\varphi\|_{L^2(\Omega^\varepsilon)^{2\times 2}}\leq \| \varphi\|_{H^1_0( \Omega^\varepsilon)^2},
\\
\\
\displaystyle\left|{2N\,Pr\over 1-N}\int_{\Omega^\varepsilon}\nabla^\perp   w^\varepsilon  \varphi\,dx
\right|&\leq &\displaystyle C  \|\nabla  w^\varepsilon\|_{L^2(\Omega^\varepsilon)^2}\|\varphi\|_{L^2(\Omega^\varepsilon)^2}\leq \eta_\varepsilon^{1\over 2}\|\varphi\|_{H^1_0(\Omega^\varepsilon)^2},
\\
\\
\displaystyle\left|Pr\,Ra\int_{ \Omega^\varepsilon}  T^\varepsilon({\bf e}_2\cdot \varphi)\,dx\right|
&\leq &\displaystyle C\| T^\varepsilon\|_{L^2( \Omega^\varepsilon)}\|\varphi\|_{L^2(\Omega^\varepsilon)^2}\leq C\eta_\varepsilon^{7\over 2}\| \varphi\|_{H^1_0(\Omega^\varepsilon)^2},\\
\\
\displaystyle\left|{Pr\over \eta_\varepsilon^2}\int_{\Omega^\varepsilon}f_1 ({\bf e}_1\cdot \varphi)\,dx\right|& \leq &\displaystyle C\eta_\varepsilon^{-2}\|f_1\|_{L^2(\Omega^\varepsilon)}\| \varphi\|_{L^2(\Omega^\varepsilon)^2}\leq
C\eta_\varepsilon^{-{1\over 2}}\| \varphi\|_{H^1_0(\Omega^\varepsilon)^2},
\end{array}$$
which together with (\ref{equality_duality_0}) gives
$$\left\langle \nabla p^\varepsilon,\varphi\right\rangle\leq C\eta_\varepsilon^{-{1\over 2}}\| \varphi\|_{H^1_0(\Omega^\varepsilon)^2},\quad\forall\,\varphi \in H^1_0(\Omega^\varepsilon)^2.$$
This gives the desired estimate  (\ref{estim_nabla_p}), which finishes the proof.

\end{proof}

\subsection{Adaptation of the unfolding method} The change of variables (\ref{dilatacion}) does not provide the information we need about the behavior of rescaled unknown in the microstructure associated to $\widetilde\Omega_\ep$. To solve this difficulty, we use an adaptation of the unfolding method (see \cite{Ciora},\cite{Ciora2} for more details) introduced to this context in \cite{Anguiano_SG} (see also related methods for different domains with roughness \cite{CLS0}, \cite{Pazanin_SG2}, \cite{grau1}, \cite{grau2}, \cite{grauMicRough} and for thin porous media  \cite{AnguianoSG_coupled}, \cite{AnguianoSG_ZAMP}, \cite{Anguiano_SG_Network}, \cite{grauMicTPM}).

\noindent Let us recall that this adaptation of the unfolding method divides the domain $\widetilde\Omega_\ep$ in cubes of lateral length $\ep$ and vertical length $h(z_1)$. Thus, given the unknowns $ \tilde{{\bf u}}^{\varepsilon}, \tilde w^\ep,  \tilde T^\varepsilon$, $p^\ep_0$ and $\tilde p^\ep_1$, we define
\begin{eqnarray}\label{uhat}
\hat{{\bf u}}^{\varepsilon}(x_1,z)=\tilde{{\bf u}}^{\varepsilon}\left( {\varepsilon}\kappa\left(\frac{x_1 }{{\varepsilon}} \right)+{\varepsilon}z_1 ,z_2 \right)\text{\ \ a.e. \ }(x_1,z)\in \omega\times Z,
\\
\noame
\hat{w}^{\varepsilon}(x_1,z)=\tilde{w}^{\varepsilon}\left( {\varepsilon}\kappa\left(\frac{x_1 }{{\varepsilon}} \right)+{\varepsilon}z_1 ,z_2 \right)\text{\ \ a.e. \ }(x, z)\in \omega\times Z,\label{what}
\\
\noame
\hat{T}^{\varepsilon}(x_1,z)=\tilde{T}^{\varepsilon}\left( {\varepsilon}\kappa\left(\frac{x_1 }{{\varepsilon}} \right)+{\varepsilon}z_1,z_2 \right)\text{\ \ a.e. \ }(x_1,z)\in \omega\times Z,\label{That}
\\
\noame
\hat{p}^{\varepsilon}_0(x_1,z_1)= {p}^{\varepsilon}_0\left( {\varepsilon}\kappa\left(\frac{x_1}{{\varepsilon}} \right)+{\varepsilon}z_1\right)\text{\ \ a.e. \ }(x_1,z_1)\in \omega\times Z'.\label{P0hat}\\
\noame
\hat{p}^{\varepsilon}_1(x_1,z)=\tilde{p}^{\varepsilon}_1\left( {\varepsilon}\kappa\left(\frac{x_1}{{\varepsilon}} \right)+{\varepsilon}z_1,z_2 \right)\text{\ \ a.e. \ }(x_1,z)\in \omega\times Z.\label{P1hat}
\end{eqnarray}
where  the function $\kappa$ is defined by (\ref{kappa_fun}).\\

  \begin{remark} For $k'\in T_\ep$, the restriction of $(\hat {\bf u}^\ep, \hat w^\ep, \hat T^\ep, \hat p^\ep_1)$ to $Z'_{k',\ep}\times Z$ does not depend on $x_1$, while as a function of $z$ it is  obtained from $( \tilde{\bf  u}^\ep, \tilde w^\ep, \tilde \theta^\ep, \tilde p^\ep_1)$  by using the change of variables
  $$z_1={x_1-\ep k'\over \ep},$$
  which transform $Z_{k',\ep}$ into $Z$. Analogously, the restriction of $\hat p^\ep_0$ to $Z'_{k',\ep}\times Z'$ does not depend on $x_1$, while as a function of $z_1$ it is obtained from $p^\ep_0$ by using the previous change of variables.
  \end{remark}
\noindent We are now in position to obtain estimates for the unfolded unknowns $(\hat{{\bf u}}^{\varepsilon}, \hat w^\ep, \hat T^\varepsilon, \hat{p}^{\varepsilon}_0, \hat p^\ep_1)$.
 \begin{lemma}\label{estimates_hat}
 There exists a constant $C>0$ independent of $\ep$, such that $\hat {\bf u}^\ep$, $\hat w^\ep$, $\hat T^\varepsilon$, $\hat p^\ep_0$ and $\hat p^\ep_1$ defined by (\ref{uhat})--(\ref{P1hat}) respectively satisfy
 \begin{eqnarray}
 \|\hat{\bf  u}^\ep\|_{L^2(\omega\times Z)^2}\leq C,&
 \|\partial_{z_1} \hat {\bf u}^\ep\|_{L^2(\omega\times Z)^{2}}\leq C\varepsilon\eta_\ep^{-1},&
 \|\partial_{z_2} \hat {\bf u}^\ep\|_{L^2(\omega\times Z)^{2}}\leq C,\label{estim_u_hat}\\
 \noame
  \|\hat w^\ep\|_{L^2(\omega\times Z)}\leq C ,&
 \|\partial_{z_1}\hat w^\ep\|_{L^2(\omega\times Z)}\leq C\varepsilon\eta_\ep^{-1},&
 \|\partial_{z_2}\hat w^\ep\|_{L^2(\omega\times Z)}\leq C,\label{estim_w_hat}\\
 \noame
  \|\hat T^\ep\|_{L^2(\omega\times Z)}\leq C\eta_\ep^2,&
  \|\partial_{z_1}\hat T^\ep\|_{L^2(\omega\times Z)}\leq C\ep \eta_\varepsilon,&
  \|\partial_{z_2}\hat T^\ep\|_{L^2(\omega\times Z)}\leq C\eta_\varepsilon^2,\label{estim_T_hat}
  \\
  \noame
    \|\hat p^\ep_0\|_{L^2(\omega\times Z')}\leq C\eta_\ep^{-2} ,&
  \|\partial_{z_1}\hat p^\ep_0\|_{L^2(\omega\times Z')}\leq C\ep\eta_\ep^{-2},&  \|\hat p^\ep_1\|_{L^2(\omega\times Z)}\leq C\eta_\varepsilon^{-1}.
\label{estim_P01_hat}
  \end{eqnarray}
 \end{lemma}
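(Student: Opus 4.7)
The plan is to reduce every bound in the statement to the already-proven a priori estimates of Lemma \ref{lemma_estimates} and Corollary \ref{Lem_estim_pressures} by exploiting the fact that the unfolding transformation is, cell by cell, a rescaled copy of the $Z_{k',\varepsilon}$-restriction of the original function. I will carry out the computation in full detail only for $\hat{\bf u}^\varepsilon$ since all the other bounds follow by precisely the same mechanism.

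\textbf{Step 1: $L^2$-isometry on each cell.} Since the restriction of $\hat{\bf u}^\varepsilon$ to $Z'_{k',\varepsilon}\times Z$ is independent of $x_1$, integrating first in $x_1\in Z'_{k',\varepsilon}$ produces a factor $\varepsilon$, and then the change of variable $y_1=\varepsilon k'+\varepsilon z_1$ (with $dy_1=\varepsilon dz_1$) maps $Z$ onto $Z_{k',\varepsilon}$, using that by $Z'$-periodicity of $h$ we have $h((\varepsilon k'+\varepsilon z_1)/\varepsilon)=h(z_1)$. Summing over $k'\in T_\varepsilon$ reconstructs $\widetilde\Omega^\varepsilon$, so
$$\|\hat{\bf u}^\varepsilon\|^2_{L^2(\omega\times Z)^2}=\sum_{k'\in T_\varepsilon}\varepsilon\int_Z|\tilde{\bf u}^\varepsilon(\varepsilon k'+\varepsilon z_1,z_2)|^2\,dz=\sum_{k'\in T_\varepsilon}\int_{Z_{k',\varepsilon}}|\tilde{\bf u}^\varepsilon|^2\,dy_1\,dz_2=\|\tilde{\bf u}^\varepsilon\|^2_{L^2(\widetilde\Omega^\varepsilon)^2}.$$
Combined with (\ref{estim_sol_dil1}), this gives the first bound in (\ref{estim_u_hat}).

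\textbf{Step 2: Chain rule for derivatives.} Inside each cell we compute $\partial_{z_1}\hat{\bf u}^\varepsilon(x_1,z)=\varepsilon(\partial_{x_1}\tilde{\bf u}^\varepsilon)(\varepsilon k'+\varepsilon z_1,z_2)$ and $\partial_{z_2}\hat{\bf u}^\varepsilon(x_1,z)=(\partial_{z_2}\tilde{\bf u}^\varepsilon)(\varepsilon k'+\varepsilon z_1,z_2)$. Applying the same isometry argument as in Step 1 yields
$$\|\partial_{z_1}\hat{\bf u}^\varepsilon\|_{L^2(\omega\times Z)^2}=\varepsilon\|\partial_{x_1}\tilde{\bf u}^\varepsilon\|_{L^2(\widetilde\Omega^\varepsilon)^2},\qquad \|\partial_{z_2}\hat{\bf u}^\varepsilon\|_{L^2(\omega\times Z)^2}=\|\partial_{z_2}\tilde{\bf u}^\varepsilon\|_{L^2(\widetilde\Omega^\varepsilon)^2}.$$
The definition of $\nabla_{\eta_\varepsilon}$ together with (\ref{estim_sol_dil1}) gives $\|\partial_{x_1}\tilde{\bf u}^\varepsilon\|_{L^2}\leq C\eta_\varepsilon^{-1}$ and $\|\partial_{z_2}\tilde{\bf u}^\varepsilon\|_{L^2}\leq C$, which closes (\ref{estim_u_hat}).

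\textbf{Step 3: Remaining quantities.} The estimates (\ref{estim_w_hat}) and (\ref{estim_T_hat}) are obtained by a verbatim repetition of Steps 1--2 applied to $\tilde w^\varepsilon$ and $\tilde T^\varepsilon$, feeding in the bounds (\ref{estim_sol_dil2}) and (\ref{estim_sol_dil3}) respectively. For the pressure $\hat p^\varepsilon_0$, which depends only on $(x_1,z_1)$, the same argument applied in the one-dimensional slice gives $\|\hat p^\varepsilon_0\|_{L^2(\omega\times Z')}=\|p^\varepsilon_0\|_{L^2(\omega)}$ and $\|\partial_{z_1}\hat p^\varepsilon_0\|_{L^2(\omega\times Z')}=\varepsilon\|(p^\varepsilon_0)'\|_{L^2(\omega)}$, from which the first two bounds in (\ref{estim_P01_hat}) follow via the $H^1(\omega)$-bound on $p^\varepsilon_0$ from (\ref{estim_p0p1}). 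For $\hat p^\varepsilon_1$ one gets analogously $\|\hat p^\varepsilon_1\|_{L^2(\omega\times Z)}=\|\tilde p^\varepsilon_1\|_{L^2(\widetilde\Omega^\varepsilon)}$ and invokes (\ref{estim_p0p1}).

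There is no real obstacle here: the whole argument amounts to observing that the unfolding operator preserves $L^2$-norms (up to the exact constants produced by the cell change of variable) while scaling $z_1$-derivatives by $\varepsilon$. The only point that requires care is that the $Z'$-periodicity of $h$ is what makes the change of variable $y_1=\varepsilon k'+\varepsilon z_1$ compatible with the variable upper boundary of $\widetilde\Omega^\varepsilon$, so that the disjoint union $\bigsqcup_{k'\in T_\varepsilon}Z_{k',\varepsilon}$ exactly recovers $\widetilde\Omega^\varepsilon$.
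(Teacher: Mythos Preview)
Your proof is correct and follows essentially the same approach as the paper: both rely on the fact that the unfolding operator is an $L^2$-isometry and scales $z_1$-derivatives by a factor $\varepsilon$, and then feed in the a priori estimates from Lemma \ref{lemma_estimates} and Corollary \ref{Lem_estim_pressures}. The only difference is that the paper cites \cite{Anguiano_SG} for the norm identities (\ref{relationpressure}), whereas you derive them explicitly via the cell-by-cell change of variables, which is arguably more self-contained.
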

\begin{proof}From the proof of Lemma 4.9 in \cite{Anguiano_SG} in the case $p=2$, we have the following properties concerning the estimates of a function $\tilde \varphi^\ep$ and its respective unfolding function $\hat \varphi^\ep$:
\begin{eqnarray}
&\|\hat \varphi^\ep\|_{L^2(\omega\times Z)^2}=\|\tilde \varphi^\ep\|_{L^2(\widetilde\Omega^\ep)^2}\,,&\nonumber\\
\noame
&
\|\partial_{z_1}\hat \varphi^\ep\|_{L^2(\omega\times Z)^{2\times 1}}=\ep\|\partial_{x_1}\tilde \varphi^\ep\|_{L^2(\widetilde\Omega^\ep)^{2}},\quad \|\partial_{z_2}\hat \varphi^\ep\|_{L^2(\omega\times Z)^{2}}=\|\partial_{z_2}\tilde \varphi^\ep\|_{L^2(\widetilde\Omega^\ep)^{2}}\,.&\label{relationpressure}
\end{eqnarray}
Thus, combining previous estimates of $\hat \varphi^\ep$ with estimates for $\tilde{\bf  u}^\ep$, $\tilde w^\ep$,  $\tilde T^\varepsilon$ given in Lemma \ref{lemma_estimates} and estimate for  $\tilde p^\ep_1$ given in Corollary \ref{Lem_estim_pressures}, we respectively get (\ref{estim_u_hat}),  (\ref{estim_w_hat}),  (\ref{estim_T_hat}) and (\ref{estim_P01_hat}).

\end{proof}

\subsection{Weak variational formulation} We give the equivalent weak variational formulation of system  (\ref{system_1})-(\ref{BCTopBot1}) and the rescaled system (\ref{system_1_dil})-(\ref{BCTop1_dil}), which will be useful in next sections in order to obtain the limit system taking into account the effects of the rough boundary.

\noindent Taking into account the decomposition of the pressure and
$$\begin{array}{rl}\displaystyle
\left\langle \nabla  p^\varepsilon, \varphi\right\rangle=&\displaystyle
\int_{\Omega^\ep}\partial_{x_1}p^\ep_0(x_1)\,  \varphi_1\,dx_1dz_2-\int_{\widetilde \Omega^\ep}\tilde p^\ep_1\,{\rm div} ( \varphi)\,dx_1dz_2,\quad \forall\, \varphi\in H^1_0(\Omega^\ep)^2,\\
\end{array}$$
 then, the weak variational formulation for  system (\ref{system_1})-(\ref{BCTopBot1}) is the following
\begin{equation}\label{form_var_u}
 \begin{array}{l}
\displaystyle {1\over 1-N}\int_{\Omega_\ep}\nabla{\bf u}^\varepsilon\cdot \nabla \varphi\,d{x}
+{1\over Pr}\int_{ \Omega^\ep}\partial_{x_1} p^\ep_0(x_1)\,\varphi_1\,dx_1dx_2-{1\over Pr}\int_{  \Omega^\ep}   p^\ep_1\,{\rm div}(\varphi)\,dx\\
\noame
\quad \displaystyle =-{1\over Pr}\int_{\Omega^\varepsilon}({\bf u}^\varepsilon\cdot \nabla){\bf u}^\varepsilon\varphi\,dx +{2N\over 1-N}\int_{\Omega^\varepsilon}\nabla^\perp w^\varepsilon\cdot \varphi \,dx

+Ra\int_{\Omega^\varepsilon}T^\varepsilon ({\bf e}_2\cdot \varphi)\,dx+{1\over \eta_\varepsilon^2}\int_{\Omega^\varepsilon}f_1 ({\bf e}_1\cdot \varphi)\,dx
\end{array}
\end{equation}

\begin{equation}\label{form_var_w}
 \begin{array}{l}
\displaystyle L\int_{\Omega^\varepsilon}\nabla w^\varepsilon\cdot \nabla \psi\,dx + {4N\over 1-N}\int_{\Omega^\varepsilon}w^\varepsilon \psi\,dx\\
\noame
\displaystyle \qquad\qquad\qquad\qquad =-{M\over Pr}\int_{\Omega^\varepsilon}({\bf u}^\varepsilon \cdot \nabla w^\varepsilon)\psi\,dx+{2N\over 1-N}\int_{\Omega^\varepsilon} {\rm rot}({\bf u}^\varepsilon) \psi\,dx+{1\over\eta_\varepsilon^2}\int_{\Omega^\varepsilon}g\, \psi\,dx,
\end{array}
\end{equation}

\begin{equation}\label{form_var_T}
 \begin{array}{l}
\displaystyle \int_{\Omega^\varepsilon}\nabla T^\varepsilon\cdot \nabla \phi\,dx +\eta_\varepsilon k\int_{\Gamma^\varepsilon_1}T^\varepsilon \phi\,d\sigma\\
\noame
\quad\qquad\qquad\qquad\qquad\displaystyle
= - \int_{\Omega^\varepsilon}({\bf u}^\varepsilon\cdot \nabla)T^\varepsilon\phi\,dx  +D\int_{\Omega^\varepsilon}\nabla^\perp w^\varepsilon\cdot \nabla T^\varepsilon\phi\,dx + \eta_\varepsilon k \int_{\Gamma^\varepsilon_1}G^\varepsilon\phi\,d\sigma\,.\end{array}
\end{equation}
for every $\varphi \in H^1_0(\Omega^\ep)^2$, $\psi\in H^1_0(\Omega^\ep)$ and $\phi\in H^1(\Omega^\ep)$  such that $\phi=0\ \hbox{on}\ \partial \Omega^\varepsilon\setminus \Gamma^\varepsilon_1$.\\

\noindent  The equivalent weak variational formulation for the rescaled system (\ref{system_1_dil})-(\ref{BCTop1_dil}) reads as follows
\begin{equation}\label{form_var_1_udil}
 \begin{array}{l}
\displaystyle {1\over 1-N}\int_{\widetilde \Omega_\ep}\nabla_{\eta_\varepsilon} \tilde {\bf u}^\varepsilon\cdot \nabla_{\eta_\varepsilon} \tilde \varphi\,dx_1dz_2
+{1\over Pr}\int_{\widetilde \Omega^\ep}\partial_{x_1} p^\ep_0(x_1)\,\tilde \varphi_1\,dx_1dz_2-{1\over Pr}\int_{\widetilde \Omega^\ep} \tilde p^\ep_1\,{\rm div}_{\eta_\ep}\,\tilde \varphi\,dx_1dz_2
\\
\noame
\quad\qquad \displaystyle =-{1\over Pr}\int_{\widetilde \Omega^\varepsilon}(\tilde{\bf  u}^\varepsilon\cdot \nabla_{\eta_\varepsilon}) \tilde {\bf u}^\varepsilon\tilde \varphi\,dx_1dz_2+{2N\over 1-N}\int_{\widetilde \Omega^\varepsilon}\nabla_{\eta_\varepsilon}^\perp \tilde w^\varepsilon\cdot \tilde \varphi \,dx_1dz_2
\\
\noame
\displaystyle \quad\qquad\qquad\qquad\qquad\quad
+Ra\int_{\widetilde \Omega^\varepsilon}\tilde T^\varepsilon ({\bf e}_2\cdot \tilde \varphi)\,dx_1dz_2+{1\over \eta_\varepsilon^2}\int_{\widetilde \Omega^\varepsilon}f_1 ({\bf e}_1\cdot \tilde \varphi)\,dx_1dz_2,
\end{array}
\end{equation}
\begin{equation}\label{form_var_1_wdil}
 \begin{array}{l}
\displaystyle L\int_{\widetilde \Omega^\varepsilon}\nabla_{\eta_\varepsilon}\tilde  w^\varepsilon\cdot \nabla_{\eta_\varepsilon} \tilde \psi\,dx_1dz_2 + {4N\over 1-N}\int_{\widetilde \Omega^\varepsilon}\tilde w^\varepsilon \tilde \psi\,dx_1dz_2\\
\noame
\displaystyle \qquad =-{M\over Pr}\int_{\widetilde \Omega^\varepsilon}( \tilde{\bf  u}^\varepsilon \cdot \nabla_{\eta_\varepsilon}\tilde  w^\varepsilon)\tilde \psi\,dx_1dz_2+{2N\over 1-N}\int_{\widetilde \Omega^\varepsilon} {\rm rot}_{\eta_\varepsilon}( \tilde{\bf  u}^\varepsilon) \tilde \psi\,dxdz_2+{1\over\eta_\varepsilon^2}\int_{\widetilde \Omega^\varepsilon}g\, \tilde \psi\,dx_1dz_2,
\end{array}
\end{equation}

\begin{equation}\label{form_var_1_Tdil}
\begin{array}{l}
\displaystyle  \int_{\widetilde \Omega^\varepsilon}\nabla_{\eta_\varepsilon}\tilde  T^\varepsilon\cdot \nabla_{\eta_\varepsilon}\tilde  \phi\,dx_1dz_2+  k\int_{\widetilde \Gamma^\varepsilon_1}\tilde T^\varepsilon \tilde \phi\,d\sigma\\
\noame
\qquad\displaystyle
= -  \int_{\widetilde \Omega^\varepsilon}(\tilde{\bf  u}^\varepsilon\cdot \nabla_{\eta_\varepsilon})\tilde T^\varepsilon\tilde \phi\,dx_1dz_2+D \int_{\widetilde \Omega^\varepsilon}\nabla_{\eta_\varepsilon}^\perp \tilde w^\varepsilon\cdot \nabla_{\eta_\varepsilon} \tilde T^\varepsilon\tilde \phi\,dx_1dz_2+  k \int_{\widetilde \Gamma^\varepsilon_1}G^\varepsilon\tilde \phi\,d\sigma
.\end{array}
\end{equation}
for every $\tilde \varphi\in H^1_0(\widetilde \Omega^\ep)^2, \tilde \psi\in H^1_0( \widetilde \Omega^\ep )$ and $\tilde \phi\in H^1(\widetilde \Omega^\ep)$  such that $\tilde \phi=0\ \hbox{on}\ \partial   \widetilde \Omega^\ep \setminus \widetilde \Gamma^\varepsilon_1$, and $\tilde\varphi(x_1,z_2)=\varphi(x_1,\eta_\varepsilon z_2)$, $\tilde\psi(x_1,z_2)=\psi(x_1,\eta_\varepsilon z_2)$ and $\tilde\phi(x_1,z_2)=\phi(x_1,\eta_\varepsilon z_2)$ for a.e. $(x_1,z_2)\in \widetilde\Omega^\varepsilon$.\\

\noindent Next, according previous estimates of the unfolding functions, we consider as test functions in (\ref{form_var_1_udil})-(\ref{form_var_1_Tdil}) the following ones
$$\begin{array}{l}
\varphi^\ep(x_1,z_2)=  \eta_\ep^2\varphi(x_1,x_1/\ep,z_2)\quad\hbox{with}\quad  \varphi(x_1,z)\in \mathcal{D}(\omega;C_{\#}^\infty(Z)^2),\\
\noame
\psi^\ep(x_1,z_2)= \eta_\ep^2\psi(x_1,x_1/\ep,z_2)\quad\hbox{with}\quad   \psi(x_1,z)\in \mathcal{D}(\omega;C_{\#}^\infty(Z)),\\
\noame
\phi^\ep(x_1,z_2)=  \phi(x_1,x_1/\ep,z_2)\quad\hbox{with}\quad   \phi(x_1,z)\in \mathcal{D}(\omega;C_{\#}^\infty(Z)).
\end{array}$$
Taking into account this, the formulation (\ref{form_var_1_udil}) reads
\begin{equation}\label{form_var_1_changvar_proof1}
 \begin{array}{l}
\displaystyle {1\over 1-N}\int_{\widetilde\Omega_\ep}\eta_\ep^2\nabla_{\eta_\ep}\tilde{\bf  u}^\varepsilon\cdot \nabla_{\eta_\ep} \varphi^\ep\,dx_1dz_2
 +{1\over Pr}\int_{\widetilde \Omega^\ep}\eta_\ep^2\partial_{x_1} p^\ep_0(x_1)\,\varphi_1^\ep\,dx_1dz_2-{1\over Pr}\int_{\widetilde \Omega^\ep}\eta_\ep^2\tilde p^\ep_1\,{\rm div}_{\eta_\ep}\,\varphi^\ep\,dx_1dz_2
\\
\noame
\qquad\qquad\qquad \displaystyle =-{1\over Pr}\int_{\widetilde\Omega^\varepsilon}\eta_\ep^2( \tilde{\bf  u}^\varepsilon\cdot \nabla_{\eta_\ep})\tilde{\bf  u}^\varepsilon\varphi^\ep\,dx_1dz_2 +{2N\over 1-N}\int_{\widetilde\Omega^\varepsilon}\eta_\ep^2\nabla_{\eta_\ep}^\perp \tilde w^\varepsilon\varphi^\ep \,dx_1dz_2
\\
\noame
\displaystyle \qquad\qquad \qquad\quad
+Ra\int_{\widetilde\Omega^\varepsilon}\eta_\ep^2\tilde T^\varepsilon({\bf e}_2\cdot \varphi^\ep)\,dx_1dz_2+\int_{\widetilde\Omega^\varepsilon}f_1 ({\bf e}_1\cdot \varphi^\ep)\,dx_1dz_2,
\end{array}\end{equation}
the formulation (\ref{form_var_1_wdil}) reads
\begin{equation}\label{form_var_1_changvar_proof2}
\begin{array}{l}
\displaystyle L\int_{\widetilde \Omega^\varepsilon}\eta_\ep^2\nabla_{\eta_\ep} \tilde w^\varepsilon\cdot \nabla_{\eta_\ep}  \psi^\ep\,dx_1dz_2 + {4N\over 1-N}\int_{\widetilde \Omega^\varepsilon}\eta_\ep^2\tilde w^\varepsilon   \psi^\ep\,dx_1dz_2\\
\noame
\displaystyle \qquad =-{M\over Pr}\int_{\widetilde \Omega^\varepsilon}\eta_\ep^2(\tilde{\bf  u}^\varepsilon \cdot \nabla_{\eta_\ep} \tilde w^\varepsilon) \psi^\ep\,dx_1dz_2+{2N\over 1-N}\int_{\widetilde \Omega^\varepsilon} \eta_\ep^2{\rm rot}_{\eta_\ep}(\tilde{\bf  u}^\varepsilon) \psi^\ep\,dx_1dz_2+\int_{\widetilde \Omega^\varepsilon}g\, \psi^\ep\,dx_1dz_2,
\end{array}
\end{equation}
and the formulation (\ref{form_var_1_Tdil}) reads reads
\begin{equation}\label{form_var_1_changvar_proof3}
 \begin{array}{l}
\displaystyle \int_{\widetilde \Omega^\varepsilon}\nabla_{\eta_\ep} \tilde T^\varepsilon\cdot \nabla_{\eta_\ep}   \phi^\ep\,dx_1dz_2+  k\int_{\widetilde \Gamma_1^\varepsilon}\tilde T^\varepsilon  \phi^\ep\,d\sigma\\
\noame
\qquad\displaystyle
=- \int_{\widetilde\Omega^\varepsilon}(\tilde{\bf  u}^\varepsilon\cdot \nabla_{\eta_\ep})\tilde T^\varepsilon \phi^\ep\,dx_1dz_2
 +D\int_{\widetilde \Omega^\varepsilon}\nabla_{\eta_\ep}^\perp \tilde w^\varepsilon\cdot \nabla_{\eta_\ep}\tilde T^\varepsilon  \phi^\ep\,dx_1dz_2 +   k \int_{\widetilde \Gamma_1^\varepsilon}G^\varepsilon   \phi^\ep\,d\sigma.
\end{array}
\end{equation}

\noindent By the unfolding change of variables  (see \cite{Anguiano_SG}, \cite{grauMicRough} for more details), we get
$$ \begin{array}{l}
\displaystyle{1\over Pr}\eta_\ep^2\int_{\widetilde\Omega^\varepsilon}( \tilde {\bf u}^\varepsilon\cdot \nabla_{\eta_\ep}) \tilde {\bf u}^\varepsilon\varphi^\ep\,dx_1dz_2\medskip\\
\displaystyle=-{\eta_\ep^2\over Pr}\int_{\widetilde\Omega^\varepsilon}\tilde  {\bf u}^\varepsilon \tilde\otimes  \tilde  {\bf u}^\varepsilon  \partial_{x_1}\varphi\,dx_1dz_2+{\eta_\varepsilon\over Pr}\left(\int_{\widetilde\Omega^\varepsilon}\partial_{z_2}{\tilde u}^\varepsilon_2 \tilde {\bf u}^\varepsilon\cdot \varphi\,dx_1dz_2+
\int_{\widetilde\Omega^\varepsilon} \tilde u^\varepsilon_2\partial_{z_2}  \tilde{\bf  u}^\varepsilon\cdot \varphi\,dx_1dz_2\right) \medskip\\
\displaystyle=-{\eta_\ep^2\varepsilon^{-1}\over Pr}\int_{\omega\times Z}\hat  {\bf u}^\varepsilon \tilde\otimes  \hat  {\bf u}^\varepsilon \cdot \partial_{z_1}\varphi\,dx_1dz+{\eta_\varepsilon\over Pr}\left(\int_{\omega\times Z}\partial_{z_2}\hat { u}^\varepsilon_2 \hat {\bf u}^\varepsilon\cdot \varphi\,dx_1dz+
\int_{\omega\times Z} \hat u^\varepsilon_2\partial_{z_2}\hat {\bf u}^\varepsilon\cdot \varphi\,dx_1dz\right)+O_\varepsilon,
\end{array}$$
where the operation $\tilde\otimes$ is defined by (\ref{tildeotimes}) and $O_\varepsilon$ tends to zero. With this and by applying the unfolding change of variables to the rest of the terms in (\ref{form_var_1_changvar_proof1}), we get
\begin{equation}\label{form_var_1_changvar_hat1}
 \begin{array}{l}
\displaystyle {1\over 1-N}\int_{\omega\times Z}\eta_\ep^2\ep^{-2}\partial_{z_1}  \hat{\bf  u}^\varepsilon\cdot \partial_{z_1} \varphi\,dx_1dz+{1\over 1-N}\int_{\omega\times Z}\partial_{z_2} \hat{\bf  u}^\varepsilon\cdot \partial_{z_2} \varphi\,dx_1dz\\
\noame\displaystyle\qquad +{1\over Pr}\int_{\omega\times Z}\eta_\ep^2\ep^{-1}\partial_{z_1}\hat p^\ep_0\, \varphi_1 \,dx_1dz-{1\over Pr}\int_{\omega\times Z}\eta_\ep^2\ep^{-1}\hat p^\varepsilon_1\,\partial_{z_1}\, \varphi_1\,dx_1dz-{1\over Pr}\int_{\omega\times Z}\eta_\ep\hat p^\varepsilon_1\,\partial_{z_2}\, \varphi_2\,dx_1dz
\\
\noame
\displaystyle\qquad
={\eta_\ep^2\varepsilon^{-1}\over Pr}\int_{\omega\times Z}\hat  {\bf u}^\varepsilon \tilde\otimes  \hat  {\bf u}^\varepsilon \cdot \partial_{z_1}\varphi\,dx_1dz-{\eta_\varepsilon\over Pr}\left(\int_{\omega\times Z}\partial_{z_2}{\hat u}^\varepsilon_2 \hat {\bf u}^\varepsilon\cdot \varphi\,dx_1dz+
\int_{\omega\times Z} \eta_\ep^2\hat u^\varepsilon_2\partial_{z_2}  \hat {\bf u}^\varepsilon\varphi\,dx_1dz\right)
\\
\noame
\displaystyle\qquad +
{2N\over 1-N}\int_{\omega\times Z} \partial_{z_2} \hat w^\ep \varphi_1 \,dx_1dz
 -{2N\over 1-N}\int_{\omega\times Z} \eta_\ep^2\ep^{-1}\partial_{z_1}\hat w^\ep \varphi_2 \,dx_1dz
\\
\noame\displaystyle\qquad+Ra\int_{\omega\times Z}\eta_\ep^2\hat T^\varepsilon ({\bf e}_2\cdot \varphi)\,dx_1dz+ \int_{\omega\times Z}f_1 ({\bf e}_1\cdot \varphi )\,dx_1dz+O_\ep,
\end{array}\end{equation}
 with $O_\varepsilon$ devoted to tend to zero.\\

\noindent Analogously, applying the unfolding change of variables to the equation  (\ref{form_var_1_changvar_proof2}), we get
\begin{equation}\label{form_var_1_changvar_hat2}\begin{array}{l}
\displaystyle L\int_{\omega\times Z} \eta_\ep^2\ep^{-2}\partial_{z_1} \hat w^\varepsilon\, \partial_{z_1} \psi\,dx_1dz+L\int_{\omega\times Z}  \partial_{z_2} \hat w^\varepsilon\, \partial_{z_2} \psi\,dx_1dz + {4N\over 1-N}\int_{\omega\times Z} \eta_\ep^2\hat w^\varepsilon  \psi\,dx_1dz\\
\noame
\displaystyle \qquad = -{M\over Pr}\int_{\omega\times Z}\eta_\ep^2 \ep^{-1} \hat u^\varepsilon_1 \partial_{z_1}\hat w^\ep   \psi\,dx_1dz-{M\over Pr}\int_{\omega\times Z} \eta_\ep \hat u^\varepsilon_2\partial_{z_2}\hat w^\ep   \psi\,dx_1dz\\
\noame\displaystyle\qquad\quad
+{2N\over 1-N}\int_{\omega\times Z} \, \eta_\ep^2\ep^{-1}\partial_{z_1}\hat u^\varepsilon_2\, \psi\,dx_1dz-{2N\over 1-N}\int_{\omega\times Z} \, \eta_\ep\partial_{z_2}\hat u^\varepsilon_1\psi\,dx_1dz\\
\noame\displaystyle\qquad\quad + \int_{\omega\times Z}g\, \psi\,dx_1dz + O_\ep,
\end{array}\end{equation}
with $O_\varepsilon$ devoted to tend to zero.\\

\noindent Finally, from (\ref{form_var_1_changvar_proof3}), we deduce
\begin{equation}\label{form_var_1_changvar_hat3}
\begin{array}{l}
\displaystyle \ep^{-2}\int_{\omega\times Z}  \partial_{z_1}\hat T^\varepsilon\, \partial_{z_1} \phi\,dx_1dz+\eta_\ep^{-2}\int_{\omega\times Z}  \partial_{z_2}\hat T^\varepsilon\, \partial_{z_2}  \phi\,dx_1dz\\
\noame
\qquad\displaystyle
=- \eta_\ep\int_{\omega\times Z}\Big(  \hat{\bf  u}^\varepsilon\cdot  \nabla_{\eta_\ep,\ep}\Big)\hat T^\varepsilon  \phi\,dx_1dz
\\
\noame\displaystyle\qquad +D\int_{\omega\times Z} \nabla^\perp_{\eta_\ep,\ep}\hat w^\ep\cdot \nabla_{\eta_\ep,\ep}(\eta_\ep^{-2} \hat T^\varepsilon) \phi\,dx_1dz+    k \int_{\omega\times \hat \Gamma_1}G\, \phi\,dx_1d\sigma+ O_\ep.
\end{array}\end{equation}
where we use the operators $\nabla_{\eta_\ep,\ep} =(\eta_\ep\ep^{-1} \partial_{z_1}, \partial_{z_2})$ and $\nabla^\perp_{\eta_\ep,\ep}=(\partial_{z_2},-\eta_\ep\ep^{-1} \partial_{z_1})$ and $O_\varepsilon$ is devoted to tend to zero.
\\

\noindent Here, we have used (\ref{trace_estim1_dil}) and (\ref{estim_sol_dil3}), which gives
$$\begin{array}{rl}\displaystyle
\left|k\int_{\widetilde \Gamma_1^\varepsilon}\tilde T^\varepsilon \phi^\varepsilon\,d\sigma\right|\leq C\|\tilde T^\varepsilon\|_{L^2(\tilde\Gamma_1^\varepsilon)}\leq C\eta_\ep\ep^{-{1\over 2}}\|\nabla_{\eta_\ep}\tilde T^\varepsilon\|_{L^2(\widetilde\Omega^\varepsilon)^2}\leq C\eta_\ep^2\ep^{-{1\over 2}}\to 0,
\end{array}
$$
and by the unfolding change of variables with respect to $x_1$, the periodicity of $h(z_1)$ and  $G(z_1)$, it holds
$$\begin{array}{rl}\displaystyle
k\int_{\widetilde \Gamma_1^\varepsilon}G^\varepsilon \phi^\varepsilon\,d\sigma=k\int_{\widetilde \Gamma_1^\varepsilon}G(x_1/\varepsilon)\phi^\varepsilon\,d\sigma= & \displaystyle \int_{\omega\times \hat \Gamma_1}G(z_1)\phi\,dx_1d\sigma +O_\varepsilon.
\end{array}
$$

\section{Homogenized model in the critical case}\label{sec:critical}
It corresponds to the critical case when the thickness of the domain is proportional to
the wavelength of the roughness, with $\lambda$ the proportionality constant, that is $\eta_\ep\approx\ep$, with $\eta_\ep/\ep\to \lambda$, $0<\lambda<+\infty$.\\

\noindent Let us introduce some notation which will be useful along this section. For a vectorial function ${\bf v}=(v_1,v_{2})$ and a scalar $w$, we introduce the operators $\nabla_\lambda$, $\Delta_\lambda$, ${\rm div}_\lambda$ and    by
$$\begin{array}{l}
(\nabla_\lambda {\bf v})_{i,1}=\lambda\partial_{z_1}v_i,\quad (\nabla_\lambda {\bf v})_{i,2}= \partial_{z_2}v_i\quad \hbox{for }i=1,2,
\\
\\
\Delta_\lambda {\bf v}=\lambda^2\partial^2_{z_1} {\bf v}+\partial^2_{z_2} {\bf v},\quad \nabla_\lambda w=(\lambda\partial_{z_1} w,\partial_{z_2}w)^t,\\
\\
{\rm div}_\lambda {\bf v}=\lambda\partial_{z_1} v_1+\partial_{z_2}v_{2},\quad    \displaystyle \nabla^\perp_\lambda w^\varepsilon=\left(\partial_{z_2} w,-\lambda \partial_{z_1}w\right)^t.
\end{array}$$
Next, we give some compactness results about the behavior of the  sequences $(  \tilde{\bf  U}^\ep, \tilde W^\ep, \tilde \theta^\ep, p^\ep_0, \tilde p^\ep_1)$ and the related unfolding functions $( \hat {\bf u}^\ep, \hat w^\ep, \hat T^\ep, \hat p^\ep_0, \hat p^\ep_1)$ satisfying the {\it a priori} estimates given in Lemma \ref{lemma_estimates2}, Corollary \ref{Lem_estim_pressures} and Lemma \ref{estimates_hat}  respectively.
\begin{lemma}\label{lem_asymp_crit}
For a subsequence of $\ep$ still denote by $\ep$, we have the following convergence results:
\begin{itemize}
\item[(i)] {\it (Velocity)} There exist $ \tilde{\bf  U}=(\tilde U_{1},\tilde U_{2})\in H^1(0,h_{\rm max};L^2(\omega)^2)$, with $ \tilde{\bf  U}=0$ on $z_2=\{0,h_{\rm max}\}$ and $\tilde U_{2}=0$, such that
\begin{eqnarray}
&\displaystyle   \tilde{\bf  U}^\ep\rightharpoonup   \tilde{\bf  U}\quad \hbox{in } H^1(0,h_{\rm max};L^2(\omega)^2),\label{conv_u_crit_tilde}\\
\noame
&\displaystyle \partial_{x_1}\left(\int_0^{h_{\rm max}}\tilde U_1(x_1,z_2)\,dz_2\right)=0\  \hbox{  in  }\omega,
&\label{div_x_crit_tilde}
 \end{eqnarray}
and  $ \hat {\bf u}=(\hat u_1, \hat u_{2})\in L^2(\omega;H^1_{\#}(Z))^2$, with $\hat {\bf u}=0$ on $z_2=\{0,h(z_1)\}$ such that it hold $\int_{Z} \hat{\bf  u}(x_1,z)dz$ $=\int_0^{h_{\rm max}} \tilde{\bf  U}(x_1,z_2)\,dz_2$ with $\int_{Z}\hat u_{2}(x_1,z)\,dz=0$, and moreover
\begin{eqnarray}
&
 \hat {\bf u}^\ep\rightharpoonup \hat {\bf u}\quad \hbox{in } L^2(\omega;H^1(Z)^2),&\label{conv_u_crit_hat}\\
\noame
&\displaystyle {\rm div}_\lambda  \hat {\bf u}=0\quad\hbox{  in  }\omega\times Z,&\label{div_crit_hat1}\\
\noame &\displaystyle
\partial_{x_1}\left(\int_{Z}\hat u_1(x_1,z)\,dz\right)=0\quad \hbox{  in  }\omega\,.
&\label{div_crit_hat2}
\end{eqnarray}
\item[(ii)] {\it (Microrotation)} There exist $\tilde W\in H^1(0,h_{\rm max};L^2(\omega))$, with $\tilde W=0$ on $z_2=\{0,h_{\rm max}\}$, such that
\begin{eqnarray}
&\displaystyle \tilde W^\ep\rightharpoonup \tilde W\quad \hbox{in } H^1(0,h_{\rm max};L^2(\omega)),\label{conv_w_crit_tilde}
\end{eqnarray}
and  $\hat w\in L^2(\omega;H^1_{\#}(Z))$, with $\hat w=0$ on $z_2=\{0,h(z_1)\}$ such that $\int_{Z}\hat w(x_1,z)dz=\int_0^{h_{\rm max}}\tilde W(x_1,z_2)\,dz_2$, and moreover
\begin{eqnarray}
\hat w^\ep\rightharpoonup \hat w\quad \hbox{in } L^2(\omega;H^1(Z)).&\label{conv_w_crit_hat}
\end{eqnarray}
\item[(iii)] {\it (Temperature)} There exist $\tilde \theta\in H^1(0,h_{\rm max};L^2(\omega))$, with $\tilde\theta=0$ on $z_2=\{0\}$, such that
\begin{eqnarray}
&\displaystyle \eta_\ep^{-{2}}\tilde \theta^\ep\rightharpoonup \tilde\theta\quad \hbox{in } H^1(0,h_{\rm max};L^2(\omega)),\label{conv_T_crit_tilde}
\end{eqnarray}
and  $\hat T\in L^2(\omega;H^1_{\#}(Z))$, with $\hat T=0$ on $z_2=\{0\}$, such that $\int_{Z}\hat T(x_1,z)dz=\int_0^{h_{\rm max}}\tilde \theta(x_1,z_2)\,dz_2$, and moreover
\begin{eqnarray}
\eta_\ep^{-2}\hat T^\ep\rightharpoonup \hat T\quad \hbox{in } L^2(\omega;H^1(Z)).&\label{conv_T_crit_hat}
\end{eqnarray}
\item[(iv)] {\it (Pressure)} There exist   three functions  $\tilde p\in L^2_0(\omega)\cap H^1(\omega)$, independent of $z_2$ with $\tilde p(i)=q_i$, $i=-1/2,1/2$,  $\hat p_0\in L^2(\omega;H^1_\#(Z'))$ and $\hat p_1\in L^2(\omega;L^2_\#(Z))$ such that
\begin{eqnarray}
&\displaystyle\eta_\varepsilon^{2}  p^\ep_0\rightharpoonup  \tilde p\quad \hbox{in } H^1(\omega),& \label{conv_P01_crit1}\\
\noame
&\displaystyle \eta_\varepsilon^{2}\ep^{-1} \partial_{z_1}\hat p^\ep_0\rightharpoonup  \partial_{z_1}\tilde p+\partial_{z_1} \hat p_0\quad \hbox{in } L^2(\omega;L^2(Z')),\quad \eta_\ep \hat p^\ep_1\rightharpoonup  \hat p_1\quad \hbox{in } L^2(\omega;L^2(Z)).& \label{conv_P01_crit2}
\end{eqnarray}
\end{itemize}
\end{lemma}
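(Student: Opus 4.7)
The strategy is to apply weak compactness to each of the rescaled and unfolded sequences using the a priori estimates from Lemmas~\ref{lemma_estimates2} and~\ref{estimates_hat} and Corollary~\ref{Lem_estim_pressures}, and then identify the limits via inheritance of boundary and periodicity conditions, the classical averaging formulas of the unfolding operator, and passage to the limit in the divergence and pressure relations.

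For part~(i), the bounds $\|\tilde{\bf U}^\varepsilon\|_{L^2(\Omega)^2}\le C$ and $\|\partial_{z_2}\tilde{\bf U}^\varepsilon\|_{L^2(\Omega)^2}\le C$ (the latter read off by isolating the $\eta_\varepsilon^{-1}\partial_{z_2}$ component inside $\nabla_{\eta_\varepsilon}$) give boundedness in $H^1(0,h_{\rm max};L^2(\omega))^2$, yielding (\ref{conv_u_crit_tilde}). The zero traces at $z_2=0$ and $z_2=h_{\rm max}$ come from the no-slip condition on $\Gamma_0$ and the zero extension outside $\widetilde\Omega^\varepsilon$, respectively. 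To obtain $\tilde U_2\equiv 0$, I would rewrite ${\rm div}_{\eta_\varepsilon}\tilde{\bf u}^\varepsilon=0$ as $\eta_\varepsilon\partial_{x_1}\tilde u_1^\varepsilon+\partial_{z_2}\tilde u_2^\varepsilon=0$ and use that $\eta_\varepsilon\tilde U_1^\varepsilon\to 0$ strongly in $L^2(\Omega)$ to deduce $\partial_{z_2}\tilde U_2=0$ in $\cD'(\Omega)$, which with the Dirichlet condition at $z_2=0$ forces $\tilde U_2\equiv 0$. Relation~(\ref{div_x_crit_tilde}) follows by integrating ${\rm div}_{\eta_\varepsilon}\tilde{\bf u}^\varepsilon=0$ over $z_2\in(0,h(x_1/\varepsilon))$, applying the Leibniz rule, using the no-slip condition on $\Gamma_1^\varepsilon$ to kill the boundary term, and then passing to the limit. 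Parts~(ii) and~(iii) proceed analogously; in~(iii) one renormalizes by $\eta_\varepsilon^{-2}$ to match the improved scaling of~(\ref{estim_sol_dil_ext_T}).

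For the unfolded limits, in the critical regime $\eta_\varepsilon/\varepsilon\to\lambda$ the prefactor $\varepsilon\eta_\varepsilon^{-1}$ in the $z_1$-derivative estimates of Lemma~\ref{estimates_hat} is $O(1)$, so $\hat{\bf u}^\varepsilon$, $\hat w^\varepsilon$ and $\eta_\varepsilon^{-2}\hat T^\varepsilon$ are bounded in $L^2(\omega;H^1(Z))$, giving the weak limits $\hat{\bf u}$, $\hat w$, $\hat T$. The $Z'$-periodicity in $z_1$ is inherited from the cell-wise construction of the unfolding operator, while the boundary traces on $\hat\Gamma_0$ and $\hat\Gamma_1$ (resp.\ only on $\hat\Gamma_0$ for $\hat T$) descend from those of the rescaled functions. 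The averaging identities $\int_Z\hat{\bf u}(x_1,z)\,dz=\int_0^{h_{\rm max}}\tilde{\bf U}(x_1,z_2)\,dz_2$ and analogues are the classical mean formulas of the unfolding operator (cf.~\cite{Anguiano_SG}), and $\int_Z\hat u_2\,dz=0$ is then an immediate corollary of $\tilde U_2\equiv 0$. Passing to the limit in $(\eta_\varepsilon/\varepsilon)\partial_{z_1}\hat u_1^\varepsilon+\partial_{z_2}\hat u_2^\varepsilon=0$ (obtained by unfolding and multiplying ${\rm div}_{\eta_\varepsilon}\tilde{\bf u}^\varepsilon=0$ by $\eta_\varepsilon$) yields ${\rm div}_\lambda\hat{\bf u}=0$, and~(\ref{div_crit_hat2}) follows by combining the averaging identity with~(\ref{div_x_crit_tilde}).

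For part~(iv), Corollary~\ref{Lem_estim_pressures} gives $\eta_\varepsilon^2 p_0^\varepsilon$ bounded in $H^1(\omega)$, producing a weak limit $\tilde p\in H^1(\omega)$, with zero mean inherited from $p^\varepsilon\in L^2_0(\Omega^\varepsilon)$. The boundary values $\tilde p(\pm 1/2)=q_{\pm 1/2}$ come from $p^\varepsilon=\eta_\varepsilon^{-2}q_i$ on $\Sigma_i^\varepsilon$ once one checks that the fluctuation $p_1^\varepsilon$ (of size $C\eta_\varepsilon^{-1/2}$ in $L^2$) contributes a strictly lower-order lateral trace than the $O(\eta_\varepsilon^{-2})$ size of $p_0^\varepsilon$. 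The bound $\|\eta_\varepsilon^2\varepsilon^{-1}\partial_{z_1}\hat p_0^\varepsilon\|_{L^2(\omega\times Z')}\le C$ yields a weak limit which, by the classical periodic two-scale decomposition of a gradient, splits as the macroscopic $x_1$-contribution reconstructing $\partial_{x_1}\tilde p$ plus a $z_1$-cell fluctuation $\partial_{z_1}\hat p_0$ with $\hat p_0\in L^2(\omega;H^1_\#(Z'))$. The convergence $\eta_\varepsilon\hat p_1^\varepsilon\rightharpoonup\hat p_1$ is immediate from the last bound in~(\ref{estim_P01_hat}). The main technical obstacle I foresee is the pressure analysis, especially the rigorous identification of the boundary trace $\tilde p(\pm 1/2)=q_{\pm 1/2}$ through the decomposition $p^\varepsilon=p_0^\varepsilon+p_1^\varepsilon$ and the extraction of the two-scale structure of $\eta_\varepsilon^2\varepsilon^{-1}\partial_{z_1}\hat p_0^\varepsilon$; the identification of $\tilde U_2\equiv 0$ from the divergence equation is also somewhat delicate, since $\eta_\varepsilon\partial_{x_1}\tilde u_1^\varepsilon$ is merely bounded, and not a priori small, in $L^2$.
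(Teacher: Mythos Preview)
Your proposal is correct and follows essentially the same route as the paper: weak compactness from the a priori bounds of Lemmas~\ref{lemma_estimates2} and~\ref{estimates_hat}, inheritance of traces and $Z'$-periodicity, the unfolding averaging identity $\int_Z\hat{\bf u}\,dz=\int_0^{h_{\rm max}}\tilde{\bf U}\,dz_2$ (the paper likewise cites \cite{Anguiano_SG}), passage to the limit in the rescaled/unfolded divergence, and the classical unfolding gradient decomposition together with the pressure decomposition for part~(iv). Your worry about $\tilde U_2=0$ is unfounded: the distributional argument you already gave (via $\eta_\varepsilon\tilde U_1^\varepsilon\to 0$ strongly in $L^2$, hence $\eta_\varepsilon\partial_{x_1}\tilde U_1^\varepsilon\to 0$ in $\cD'$) is exactly what is needed, and the paper argues identically; the only place where the paper is slightly more explicit than you is in checking $\tilde p\in L^2_0(\omega)$, which it does by writing $0=\int_{\widetilde\Omega^\varepsilon}\eta_\varepsilon^2\tilde p^\varepsilon=\int_\omega h(x_1/\varepsilon)\eta_\varepsilon^2 p_0^\varepsilon\,dx_1+\int_{\widetilde\Omega^\varepsilon}\eta_\varepsilon^2\tilde p_1^\varepsilon$ and passing to the limit.
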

\begin{proof} We will only give some remarks and,  for more details,  we refer the reader to Lemmas 5.2-i) and 5.4-i) in \cite{Anguiano_SG}.\\

\noindent We start with the extension $\tilde{\bf  U}^\ep$. Estimates  (\ref{estim_sol_dil_ext_u}) imply the existence of $  \tilde {\bf U}\in H^1(0,h_{\rm max};L^2(\omega)^2)$ such that convergence (\ref{conv_u_crit_tilde}) holds, and the continuity of the trace applications from the space of $  \tilde{\bf  U}$ such that $\| \tilde{\bf  U}\|_{L^2}$ and $\|\partial_{z_2}  \tilde{\bf  U}\|_{L^2}$ are bounded to $L^2(\Gamma_1)$ and to $L^2(\Gamma_0)$ implies $ \tilde{\bf  U}=0$ on $\Gamma_1$ and $\Gamma_0$. Next, from the free divergence condition ${\rm div}_{\eta_\ep}( \tilde{\bf  U}^\ep)=0$, it can be deduced that $ \tilde U_{2}$ is independent of $z_2$, which together with the boundary conditions satisfied by $\tilde U_{2}$ on $z_2=\{0,h_{\rm max}\}$ implies that $\tilde U_{2}=0$. Finally, from the free divergence condition and the convergence (\ref{conv_u_crit_tilde}) of $  \tilde{\bf  U}^\ep$, it is straightforward the corresponding free divergence condition in a thin domain given in (\ref{div_x_crit_tilde}).\\

\noindent Concerning $\hat {\bf u}^\ep$, estimates given in (\ref{estim_u_hat}) imply the existence of $\hat u\in  L^2(\omega;H^1(Z)^3)$ such that convergence (\ref{conv_u_crit_hat}) holds. It can be proved the $Z'$-periodicity of $\hat u$, and applying the unfolding change of variables  to the free divergence condition ${\rm div}_{\eta_\ep}\tilde u_\ep=0$, passing to the limit, we get divergence condition (\ref{div_crit_hat1}). Finally,
 it can be proved that $\int_Z \hat {\bf u}(x_1,z)\,dz=\int_0^{h_{\rm max}} \tilde{\bf  U}(x_1,z_2)\,dz_2$ which together with $\tilde U_{2}=0$ implies $\int_0^{h_{\rm max}}\tilde U_{2}(x_1,z_2)\,dz_2=0$, and together with property (\ref{div_x_crit_tilde}) implies the divergence condition given in (\ref{div_crit_hat2}).\\

\noindent We continue proving $(ii)$.  From estimates (\ref{estim_sol_dil_ext_w}), convergence  (\ref{conv_w_crit_tilde}) and that $\tilde W=0$ on $z_2=\{0,h_{\rm max}\}$ are obtained straighfordward. The proofs of the convergence of $W^\ep$ and identity $\int_Z\hat w\,dz=\int_0^{h_{\rm max}}\tilde W\,dz_2$ are similar to the ones of $\hat {\bf u}^\ep$. \\

\noindent We continue with $(iii)$.  The proof is similar to $(ii)$, but we have to take into account estimates (\ref{estim_sol_dil_ext_T}) and that the dirichlet boundary condition for temperature is imposed on the bottom and not on the top. \\

\noindent We finish the proof with $(iv)$. From estimates of $ p_0^\ep$ and $\hat p_0^\ep$, and   the classical compactness result for the unfolding method for a bounded sequence in $H^1$, we get  convergences for (\ref{conv_P01_crit1}) and (\ref{conv_P01_crit2})$_1$. Estimate for $\hat p^\ep_1$ implies convergence (\ref{conv_P01_crit2})$_2$. From the boundary conditions of $\tilde p^\ep$ on $x_1=\{-1/2,1/2\}$, the decomposition of the pressure and the convergences of $\hat p^\ep_0$ and $\hat p^\ep_1$,  it holds the boundary conditions for $\tilde p$. Finally, since $\tilde p^\ep$ has mean value zero, from the decomposition of the pressure, we have
$$0=\int_{\widetilde\Omega^\ep}\eta^2_\ep\tilde p^\ep\,dx_1dz_2=\int_\omega h(x_1/\ep)\eta^2_\ep p^\ep_0\,dx_1+\int_{\widetilde\Omega^\ep}\eta^2_\ep\tilde p^\ep_1\,dx_1dz_2.$$
Taking into account that $h$ is $x_1$-periodic, the convergence of $\eta^2_\ep p^\ep_0$ to $\tilde p$ and that
$$\left|\int_{\widetilde\Omega^\ep}\eta^2_\ep\tilde p^\ep_1\,dx_1dz_2\right|\leq C\eta_\ep\to 0,$$
we get
$$ \int_{Z'}h\,dz_1\,\int_{\omega}\tilde p\,dx_1=0,$$
and so that $\tilde p$ has null mean value in $\omega$.

\end{proof}

\noindent Using previous convergences, in the following theorem we give the two-pressured homogenized
system satisfied by $( \hat{\bf u},\hat w, \tilde P, \hat T)$.

\begin{theorem}[Limit unfolded problems]\label{thm_limit_critical}
In the case $\eta_\ep\approx \ep$, with $\eta_\ep/\ep\to \lambda$, $0<\lambda<+\infty$, then the functions $ \hat{\bf u}, \hat w, \hat T$ and $\tilde p$ given in Lemma \ref{lem_asymp_crit} satisfy
\begin{itemize}
\item  $( \hat {\bf u}, \tilde p)\in L^2(\omega;H^1_{\#}(Z)^2)\times    (L^2_0(\omega)\cap H^1(\omega))$ is the unique solution of the two-pressure homogenized Stokes problem
\begin{equation}\label{hom_system_crit_u}
\left\{\begin{array}{rl}
\displaystyle
-{1\over 1-N}\Delta_{\lambda}   \hat{\bf u}+{1\over Pr}\nabla_{\lambda}\hat q=\left(f_1(x_1)-{1\over Pr}\partial_{x_1}\tilde  p(x_1)\right){\bf e}_1&\hbox{ in }\omega\times Z,\\
\noame
{\rm div}_{\lambda}  \hat{\bf u}=0&\hbox{ in }\omega\times Z,\\
\noame
 \hat{\bf u}=0&\hbox{ on }\omega \times (\hat \Gamma_0\cup \hat \Gamma_1),\\
\displaystyle \partial_{x_1}\left(\int_{Z}\hat u_1(x_1,z)\,dz\right)=0&\hbox{ in }\omega,\\
\noame
\displaystyle \int_Z\hat u_{2}\,dz=0&\hbox{ in }\omega,\\
\noame
\tilde p(i)=q_i&\ i=-1/2,1/2,\\
\noame
  \hat q(x_1,z)\in L^2(\omega;L^2_\#(Z)).&
\end{array}\right.
\end{equation}
\item   $\hat w\in L^2(\omega;H^1_{\#}(Z))$ is the unique solution of the Laplace problem
\begin{equation}\label{hom_system_crit_w}
\left\{\begin{array}{rl}
\displaystyle
-L\Delta_{\lambda} \hat w =g(x_1)&\hbox{ in }\omega\times Z,\\
\noame
\hat w=0&\hbox{ on }\omega \times (\hat \Gamma_0\cup \hat \Gamma_1),
\end{array}\right.
\end{equation}
\item   $\hat T\in L^2(\omega;H^1_{\#}(Z))$ is the unique solution of the nonlinear problem
\begin{equation}\label{hom_system_crit_T}
\left\{\begin{array}{rl}
 -\Delta_{\lambda} \hat T -D\nabla_{\lambda}^\perp\hat w\cdot \nabla_{\lambda}\hat T=0&\hbox{ in }\omega\times Z,\\
\noame
\hat T=0&\hbox{ on }z_2=\omega \times \hat \Gamma_0,\\
\noame
\nabla_\lambda \hat T\cdot {\bf n}=k\, G(z_1)&\hbox{ on }\omega \times \hat \Gamma_1.
\end{array}\right.
\end{equation}
\end{itemize}
\end{theorem}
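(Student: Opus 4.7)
The plan is to pass to the limit $\varepsilon\to 0$ in the three unfolded variational identities (\ref{form_var_1_changvar_hat1})--(\ref{form_var_1_changvar_hat3}), using the compactness of Lemma \ref{lem_asymp_crit}, and then to conclude by uniqueness of each cell problem that the whole sequence (not only a subsequence) converges to the stated limits. The critical scaling $\eta_\ep/\ep\to\lambda\in(0,+\infty)$ converts the anisotropic gradient $\nabla_{\eta_\ep,\ep}=(\eta_\ep\ep^{-1}\partial_{z_1},\partial_{z_2})$ into $\nabla_\lambda$, while the prefactors $\eta_\ep^2$, $\eta_\ep$ and $\eta_\ep^2\ep^{-1}=\eta_\ep(\eta_\ep/\ep)$ all tend to zero and kill the lower-order terms when multiplied against quantities bounded by Lemma \ref{estimates_hat}.

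\textbf{Microrotation.} I would start with (\ref{form_var_1_changvar_hat2}) since $\hat w$ decouples from the other unknowns in the limit. The leading viscous terms combine, via the convergence (\ref{conv_w_crit_hat}) and the identity $\eta_\ep^2\ep^{-2}\to\lambda^2$, to $L\int_{\omega\times Z}\nabla_\lambda\hat w\cdot\nabla_\lambda\psi\,dx_1 dz$. Every other term on either side carries one of the vanishing prefactors above (times a bounded quantity) and drops out, while $\int_{\omega\times Z}g(x_1)\psi\,dx_1 dz$ passes trivially. This identifies (\ref{hom_system_crit_w}) weakly on $\omega\times Z$, with $Z'$-periodicity in $z_1$ inherited from the unfolding and homogeneous Dirichlet data on $\hat\Gamma_0\cup\hat\Gamma_1$ coming from Lemma \ref{lemma_estimates2}. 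Uniqueness is a standard Lax--Milgram statement on $L^2(\omega;H^1_{0,\#}(Z))$, hence the full sequence $\hat w^\ep$ converges.

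\textbf{Velocity and pressure.} Next, I would pass to the limit in (\ref{form_var_1_changvar_hat1}) in two steps. First, I test against $\varphi\in\mathcal{D}(\omega;C^\infty_\#(Z)^2)$ satisfying ${\rm div}_\lambda\varphi=0$ in $Z$ and $\int_Z\varphi_1(x_1,\cdot)\,dz=0$, which annihilates both the macroscopic pressure $\tilde p$ (after integration by parts in $x_1$, using the first of (\ref{conv_P01_crit2})) and the microscopic pressure $\hat p_1$ (via the second). The convective term is $O(\eta_\ep)$ by Lemma \ref{estimates_hat}, the buoyancy term is $O(\eta_\ep^2)$, and the coupling companion with prefactor $\eta_\ep^2\ep^{-1}\partial_{z_1}\hat w^\ep$ vanishes; what survives is the diffusion $\frac{1}{1-N}\int\nabla_\lambda\hat{\bf u}\cdot\nabla_\lambda\varphi$, the surviving coupling $\frac{2N}{1-N}\int\partial_{z_2}\hat w\,\varphi_1$ (with $\hat w$ already determined by Step 1), and the force $\int f_1\varphi_1$. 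Second, removing the divergence-free restriction and invoking De Rham in the cell $Z$ (parametrized in $x_1$) reintroduces a cell pressure $\hat q\in L^2(\omega;L^2_\#(Z))$ as the Lagrange multiplier of the constraint ${\rm div}_\lambda\hat{\bf u}=0$, yielding the Stokes-type relation (\ref{hom_system_crit_u}). The constraints ${\rm div}_\lambda\hat{\bf u}=0$, $\int_Z\hat u_2\,dz=0$ and $\partial_{x_1}\int_Z\hat u_1\,dz=0$ come from Lemma \ref{lem_asymp_crit}, and the lateral data $\tilde p(\pm 1/2)=q_{\pm 1/2}$ from (\ref{BClat}) via (\ref{conv_P01_crit1}). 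Uniqueness of the coupled system $(\hat{\bf u},\tilde p,\hat q)$ is standard.

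\textbf{Temperature.} This is the main obstacle, because of the nonlinear term
\begin{equation*}
D\int_{\omega\times Z}\nabla_{\eta_\ep,\ep}^\perp\hat w^\ep\cdot\nabla_{\eta_\ep,\ep}(\eta_\ep^{-2}\hat T^\ep)\,\phi\,dx_1 dz,
\end{equation*}
a product of two only-weakly-convergent gradients. My remedy is a div--curl argument in the cell variable: a direct computation shows that ${\rm div}_{\eta_\ep,\ep}(\nabla_{\eta_\ep,\ep}^\perp\hat w^\ep)\equiv 0$ and ${\rm curl}_{\eta_\ep,\ep}(\nabla_{\eta_\ep,\ep}(\eta_\ep^{-2}\hat T^\ep))\equiv 0$ as identities, while both sequences are bounded in $L^2(\omega\times Z)^2$ by Lemma \ref{estimates_hat} and (\ref{conv_T_crit_hat}). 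The Murat--Tartar compensated-compactness lemma (applied for a.e.~$x_1$ and integrated) then yields
\begin{equation*}
\nabla_{\eta_\ep,\ep}^\perp\hat w^\ep\cdot\nabla_{\eta_\ep,\ep}(\eta_\ep^{-2}\hat T^\ep)\rightharpoonup \nabla_\lambda^\perp\hat w\cdot\nabla_\lambda\hat T\quad\hbox{in }\mathcal{D}'(\omega\times Z).
\end{equation*}
The convective term $-\eta_\ep\int_{\omega\times Z}(\hat{\bf u}^\ep\cdot\nabla_{\eta_\ep,\ep})\hat T^\ep\,\phi\,dx_1 dz$ is $O(\eta_\ep^3)$ after accounting for the rescaling $\hat T^\ep=O(\eta_\ep^2)$ and vanishes; the linear diffusion passes by (\ref{conv_T_crit_hat}). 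The Robin trace converges, by $Z'$-periodicity of $G$ and the unfolding change of variables on $\widetilde\Gamma_1^\ep$, to $k\int_{\omega\times\hat\Gamma_1}G(z_1)\phi\,dx_1 d\sigma$, which upon integration by parts against $\phi$ produces the Neumann-type datum $\nabla_\lambda\hat T\cdot{\bf n}=kG(z_1)$ on $\hat\Gamma_1$ in (\ref{hom_system_crit_T}). Uniqueness of the resulting linear transport-diffusion problem in $\hat T$ (with $\hat w$ now fixed) follows from the Fredholm alternative: the principal part $-\Delta_\lambda$ is elliptic and coercive with the given boundary conditions, the first-order perturbation is compact by Rellich, and triviality of the homogeneous kernel reduces to a standard energy estimate exploiting the ${\rm div}_\lambda$-free character of $\nabla_\lambda^\perp\hat w$.
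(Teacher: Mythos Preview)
Your strategy matches the paper's, with one bookkeeping slip and one genuinely different technical device.

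The slip is in the velocity step: you claim the coupling $\frac{2N}{1-N}\int\partial_{z_2}\hat w\,\varphi_1$ survives, but it does not. Tracing the scaling from (\ref{form_var_1_changvar_proof1}) shows that this term actually carries an extra factor $\eta_\ep$ (the displayed identity (\ref{form_var_1_changvar_hat1}) drops it by a typo, but it is correctly restored in (\ref{Form_var_crit_pass_limit})), so it vanishes as $\ep\to 0$. With that term present you would not obtain (\ref{hom_system_crit_u}) as stated; once you remove it, the rest of your velocity argument goes through. A smaller procedural difference there: you recover $\hat q$ by first testing with $\lambda$-solenoidal $\varphi$ and then invoking De Rham, whereas the paper passes the two unfolded pressures directly via (\ref{conv_P01_crit2}) and simply sets $\hat q=\hat p_0/\lambda+\hat p_1$.

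The genuinely different device is in the temperature step. You handle the nonlinear product $\nabla_{\eta_\ep,\ep}^\perp\hat w^\ep\cdot\nabla_{\eta_\ep,\ep}(\eta_\ep^{-2}\hat T^\ep)$ by compensated compactness, observing that it equals $(\eta_\ep/\ep)$ times a $2\times 2$ Jacobian of two sequences bounded in $H^1$; this is correct and delivers distributional convergence of the product. The paper instead upgrades $\nabla_{\eta_\ep,\ep}\hat w^\ep\rightharpoonup\nabla_\lambda\hat w$ to strong $L^2$ convergence by an energy argument---testing (\ref{form_var_1_changvar_hat2}) with $\hat w^\ep$, the limit identity (\ref{form_var_1_changvar_proof2_limit}) with $\hat w$, and matching norms---after which the product passes trivially as strong$\times$weak. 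The paper's route is slightly more elementary and yields strong convergence of the microrotation gradient as a byproduct; your div--curl route avoids that extra step at the price of a weaker mode of convergence for the product, which nonetheless suffices here.
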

\begin{proof}  We only have to prove   (\ref{hom_system_crit_u})$_{1}$, (\ref{hom_system_crit_w})$_{1}$ and (\ref{hom_system_crit_T})$_{1,3}$. The rest  follows from Lemma \ref{lem_asymp_crit}. We divide the proof  in three steps, where we are going to pass to the limit in variational formulation (\ref{form_var_1_changvar_hat1})-(\ref{form_var_1_changvar_hat3}), taking into account that $\eta_\ep/\ep\to \lambda$, $0<\lambda<+\infty$.\\

\noindent {\it Step 1.}  To prove   (\ref{hom_system_crit_u})$_{1}$,  we consider (\ref{form_var_1_changvar_hat1}) with $\varphi$ replaced by $\bar \varphi^\ep=(\lambda(\varepsilon/\eta_\varepsilon)\varphi_1,\varphi_2)$ with $\varphi=(\varphi_1, \varphi_2)\in\mathcal{D}(\omega;C^\infty_{\#}(Z)^2)$. This gives the following variational formulation:
\begin{equation}\label{Form_var_crit_pass_limit}
 \begin{array}{l}
\displaystyle {1\over 1-N}\int_{\omega\times Z}\eta_\ep\ep^{-1}\lambda\partial_{z_1}{\hat u}^\varepsilon_1\, \partial_{z_1}  \varphi_1\,dx_1dz+{1\over 1-N}\int_{\omega\times Z}\eta_\ep^2\ep^{-2}\partial_{z_1}{\hat u}^\varepsilon_2\, \partial_{z_1}  \varphi_2\,dx_1dz
\\
\noame
\displaystyle   +{1\over 1-N}\int_{\omega\times Z} \lambda(\varepsilon/\eta_\varepsilon)\partial_{z_2}{ \hat u_1}^\varepsilon\, \partial_{z_2} \varphi_1\,dx_1dz+{1\over 1-N}\int_{\omega\times Z} \partial_{z_2}{ \hat u_2}^\varepsilon\, \partial_{z_2} \varphi_2\,dx_1dz
\\
\noame\displaystyle     +{1\over Pr}\lambda   (\varepsilon/\eta_\varepsilon)\int_{\omega\times Z}\eta_\ep^2\ep^{-1}\partial_{z_1}\hat p^\ep_0\,\partial_{z_1}\varphi_1 \,dx_1dz
-{1\over Pr}\int_{\omega\times Z}\lambda\eta_\ep\hat p_1^\varepsilon\,\partial_{z_1}\, \varphi_1\,dx_1dz-{1\over Pr}\int_{\omega\times Z}\eta_\ep\hat p_1^\varepsilon\,\partial_{z_2}\, \varphi_2\,dx_1dz
\\
\noame
\displaystyle
 = {\eta_\ep^2\varepsilon^{-1}\over Pr}\int_{\omega\times Z}{\hat  u}^\varepsilon \tilde\otimes  {\hat  u}^\varepsilon  \,\partial_{z_1}\bar \varphi^\varepsilon\,dx_1dz-{\eta_\varepsilon \over Pr}\left(\int_{\omega\times Z}\partial_{z_2}{\hat u}^\varepsilon_2 \hat{\bf u}^\varepsilon\cdot \bar \varphi^\varepsilon\,dx_1dz+
\int_{\omega\times Z} \hat u^\varepsilon_2\partial_{z_2}  \hat{\bf u}^\varepsilon\bar \varphi^\varepsilon\,dx_1dz\right)
\\
\noame
\displaystyle  
+{2N\over 1-N}\int_{\omega\times Z} \eta_\varepsilon \lambda(\varepsilon/\eta_\varepsilon)\partial_{z_2} \hat w^\ep \varphi_1 \,dx_1dz
 -{2N\over 1-N}\int_{\omega\times Z} \eta_\ep^2\ep^{-1}\partial_{z_1}\hat w^\ep \varphi_2 \,dx_1dz
\\
\noame\displaystyle  
+Ra\int_{\omega\times Z}\eta_\ep^2\hat T^\varepsilon\, \varphi_2\,dx_1dz + \int_{\omega\times Z} \lambda(\varepsilon/\eta_\varepsilon)f_1\, \varphi_1\,dx_1dz+O_\ep,
\end{array}
\end{equation}
where $O_\varepsilon$ is devoted to tends to zero when $\varepsilon\to 0$. Below, let us pass to the limit when $\varepsilon$ tends to zero in each term of (\ref{Form_var_crit_pass_limit}):
\begin{itemize}
\item For the first fourth terms in the left-hand side of (\ref{Form_var_crit_pass_limit}), taking into account convergence (\ref{conv_u_crit_hat}) and that that $\eta_\varepsilon/\varepsilon\to \lambda$ and $\lambda(\varepsilon/\eta_\varepsilon)\to 1$, we get that
$$\begin{array}{l}\displaystyle
{1\over 1-N}\int_{\omega\times Z}\eta_\ep\ep^{-1}\lambda\partial_{z_1}{\hat u}^\varepsilon_1\, \partial_{z_1}  \varphi_1\,dx_1dz+{1\over 1-N}\int_{\omega\times Z}\eta_\ep^2\ep^{-2}\partial_{z_1}{\hat u}^\varepsilon_2\, \partial_{z_1}  \varphi_2\,dx_1dz
\\
\noame
\displaystyle
+{1\over 1-N}\int_{\omega\times Z} \lambda(\varepsilon/\eta_\varepsilon)\partial_{z_2}{ \hat u_1}^\varepsilon\, \partial_{z_2} \varphi_1\,dx_1dz+{1\over 1-N}\int_{\omega\times Z} \partial_{z_2}{ \hat u_2}^\varepsilon\, \partial_{z_2} \varphi_2\,dx_1dz
\end{array}$$
converges to
$${1\over 1-N}\int_{\omega\times Z}\lambda^2\partial_{z_1}\hat{\bf  u}^\varepsilon\cdot\partial_{z_1}  \varphi\,dx_1dz+{1\over 1-N}\int_{\omega\times Z}\partial_{z_2}  \hat{\bf  u}^\varepsilon\cdot \partial_{z_2}  \varphi\,dx_1dz=
 {1\over 1-N}\int_{\omega\times Z}\nabla_{\lambda}  \hat{\bf  u}\cdot \nabla_{\lambda} \varphi\,dx_1dz.
$$

\item For the fifth  to  eighth terms in the left hand side of (\ref{Form_var_crit_pass_limit}), taking into account that $\lambda(\varepsilon/\eta_\varepsilon)\to 1$ and the convergences   (\ref{conv_P01_crit1}) and (\ref{conv_P01_crit2}), we have the following convergences
$$\begin{array}{l}
\displaystyle
{1\over Pr}\lambda   (\varepsilon/\eta_\varepsilon)\int_{\omega\times Z}\eta_\ep^2\ep^{-1}\partial_{z_1}\hat p^\ep_0\,\varphi_1 \,dx_1dz\to {1\over Pr}\int_{\omega\times Z}(\partial_{x_1}\tilde p  +\partial_{z_1}\hat p_0)\,\varphi_1\,dx_1dz,
\end{array}
$$
$$-{1\over Pr}\int_{\omega\times Z}\lambda\eta_\ep\hat p^\varepsilon_1\,\partial_{z_1}\, \varphi_1\,dx_1dz-{1\over Pr}\int_{\omega\times Z}\hat \eta_\ep p^\varepsilon_1\,\partial_{z_2}\, \varphi_2\,dx_1dz\to -{1\over Pr}\int_{\omega\times Z} \hat p_1\,{\rm div}_\lambda\varphi\,dx_1dz.$$

\item For the first three terms in the right-hand side of (\ref{Form_var_crit_pass_limit}), by taking into account the estimates (\ref{estim_u_hat}), we get
$$\left|{\eta_\ep^2\varepsilon^{-1}\over Pr}\int_{\omega\times Z}\hat  {\bf u}^\varepsilon \tilde\otimes  \hat  {\bf u}^\varepsilon  \,\partial_{z_1}\bar \varphi^\varepsilon\,dx_1dz\right|\leq  C\eta_\ep^2\varepsilon^{-1}\|  \hat{\bf  u}^\varepsilon\|^2_{L^2(\omega\times Z)^2}\|\partial_{z_1} \varphi\|_{L^\infty(\omega\times Z)^2}\leq C\eta_\ep^2\varepsilon^{-1}\to 0,$$
and
$$\begin{array}{l}\displaystyle
\left|{\eta_\varepsilon \over Pr}\left(\int_{\omega\times Z}\partial_{z_2}{\hat u}^\varepsilon_2  \hat{\bf  u}^\varepsilon\cdot \bar \varphi^\varepsilon\,dx_1dz+
\int_{\omega\times Z} \hat u^\varepsilon_2\partial_{z_2}  \hat{\bf  u}^\varepsilon\bar \varphi^\varepsilon\,dx_1dz\right)\right|\\
\noame \displaystyle\qquad \leq \eta_\varepsilon\| \hat {\bf u}^\varepsilon\|_{L^2(\omega\times Z)^2}\|\partial_{z_2} \hat{\bf  u}^\varepsilon\|_{L^2(\omega\times Z)^2}\|\varphi\|_{L^\infty(\omega\times Z)^2}\\
\noame
\qquad \leq  C\eta_\varepsilon\to 0.
\end{array}
$$
\noindent Then, we deduce that the convective terms satisfy
$${\eta_\ep^2\varepsilon^{-1}\over Pr}\int_{\omega\times Z}\hat {\bf  u}^\varepsilon \tilde\otimes  \hat {\bf  u}^\varepsilon  \,\partial_{z_1}\bar \varphi^\varepsilon\,dx_1dz-{\eta_\varepsilon \over Pr}\left(\int_{\omega\times Z}\partial_{z_2}{\hat u}^\varepsilon_2  \hat{\bf  u}^\varepsilon\cdot \bar \varphi^\varepsilon\,dx_1dz+
\int_{\omega\times Z} \hat u^\varepsilon_2\partial_{z_2}  \hat{\bf  u}^\varepsilon\bar \varphi^\varepsilon\,dx_1dz\right)\to 0.$$

\item For the fourth and fifth terms in the right-hand side of (\ref{Form_var_crit_pass_limit}), by taking into account convergence (\ref{conv_w_crit_hat}), we have
$${2N\over 1-N}\int_{\omega\times Z} \eta_\varepsilon \lambda(\varepsilon/\eta_\varepsilon)\partial_{z_2} \hat w^\ep \varphi_1 \,dx_1dz
 -{2N\over 1-N}\int_{\omega\times Z} \eta_\ep^2\ep^{-1}\partial_{z_1}\hat w^\ep \varphi_2 \,dx_1dz\to 0.$$
\item For the sixth term in the right-hand side of (\ref{Form_var_crit_pass_limit}), by taking into account convergence (\ref{conv_T_crit_hat}), we get
$$Ra\int_{\omega\times Z}\eta_\ep^2\hat T^\varepsilon \,\varphi_2\,dx_1dz\to 0.$$
\item For the last term in the right-hand side of (\ref{Form_var_crit_pass_limit}), by taking into account that $\lambda(\varepsilon/\eta_\varepsilon)\to 1$, we get
$$\int_{\omega\times Z} \lambda(\varepsilon/\eta_\varepsilon)f_1 \,\varphi_1\,dx_1dz\to   \int_{\omega\times Z} f_1 \,\varphi_1\,dx_1dz.$$
\end{itemize}
Therefore, by previous convergences, and taking  $\hat q:=\hat p_0/\lambda+\hat p_1\in L^2(\omega;L^2_\#(Z))$, we deduce that the limit variational formulation is given by the following one
\begin{equation}\label{form_var_1_changvar_hat1_limit2}
 \begin{array}{l}
\displaystyle {1\over 1-N}\int_{\omega\times Z}\nabla_{\lambda} \hat{\bf  u}\cdot \nabla_{\lambda} \varphi\,dx_1dz +{1\over Pr}\int_{\omega\times Z}  \partial_{x_1}\tilde p\,({\bf e_1}\cdot \varphi)\,dx_1dz-{1\over Pr}\int_{\omega\times Z} \hat q\,{\rm div}_\lambda\varphi\,dx_1dz\\
\noame
\displaystyle
  =
 \int_{\omega\times Z}f_1 \,({\bf e}_1\cdot \varphi)\,dx_1dz.
\end{array}\end{equation}
\noindent By density, (\ref{form_var_1_changvar_hat1_limit2}) holds for every function $\varphi\in L^2(\omega;H^1_{\#}(Z)^2)$ and is equivalent to the system (\ref{hom_system_crit_u})$_1$. We also remark that (\ref{form_var_1_changvar_hat1_limit2}) admits a unique solution, and then the convergence is for the   complete sequences of the unknowns.\\

\noindent {\it Step 2.} Next, we  prove (\ref{hom_system_crit_w})$_{1}$. Let us pass to the limit when $\varepsilon$ tends to zero in each term of the variational formulation  (\ref{form_var_1_changvar_hat2}):
\begin{itemize}
\item For the first two terms in the left-hand side of  (\ref{form_var_1_changvar_hat2}), by using convergence (\ref{conv_w_crit_hat}) and $\eta_\varepsilon/\ep\to \lambda$, we get
$$\displaystyle L\int_{\omega\times Z} \eta_\ep^2\ep^{-2}\partial_{z_1} \hat w^\varepsilon\, \partial_{z_1} \psi\,dx_1dz+L\int_{\omega\times Z}  \partial_{z_2} \hat w^\varepsilon\, \partial_{z_2} \psi\,dx_1dz \to  L \int_{\omega\times Z}\nabla_{\lambda} \hat w \cdot \nabla_{\lambda} \psi\,dx_1dz.
$$
\item For the third term in the left-hand side of  (\ref{form_var_1_changvar_hat2}),   by using convergence (\ref{conv_w_crit_hat}), we have
$${4N\over 1-N}\int_{\omega\times Z}\eta_\ep^2 \hat w^\varepsilon  \psi\,dx_1dz\to 0.$$
\item For the first two terms in the right-hand side of   (\ref{form_var_1_changvar_hat2}), by using estimates (\ref{estim_u_hat}) and (\ref{estim_w_hat}), we get
$$\begin{array}{l}\displaystyle \left|-{M\over Pr}\int_{\omega\times Z} \eta_\ep^2\ep^{-1} \hat u^\varepsilon_1 \partial_{z_1}\hat w^\ep   \psi\,dx_1dz\right|\leq C\eta_\ep^2\ep^{-1}\| \hat{\bf  u^\varepsilon}\|_{L^2(\omega\times Z)^2}\|\partial_{z_1}\hat w^\ep\|_{L^2(\omega\times Z)}\leq C\eta_\ep,
\\
\noame
\displaystyle
\left|-{M\over Pr}\int_{\omega\times Z} \eta_\ep \hat u^\varepsilon_2\partial_{z_2}\hat w^\ep   \psi\,dx_1dz\right|\leq
C\eta_\ep\|\hat{\bf  u^\varepsilon}\|_{L^2(\omega\times Z)^2}\|\partial_{z_2}\hat w^\ep\|_{L^2(\omega\times Z)}\leq C\eta_\ep.
\end{array}$$
Thus, we get
$$-{M\over Pr}\int_{\omega\times Z} \eta_\ep^2\ep^{-1} \hat u^\varepsilon_1 \partial_{z_1}\hat w^\ep   \psi\,dx_1dz-{M\over Pr}\int_{\omega\times Z} \eta_\ep \hat u^\varepsilon_2\partial_{z_2}\hat w^\ep   \psi\,dx_1dz\to 0.$$
\item For the third and fourth terms in the right-hand side of  (\ref{form_var_1_changvar_hat2}), by using estimates (\ref{estim_u_hat}) and (\ref{estim_w_hat}), we get
$$\begin{array}{l}\displaystyle
\left|{2N\over 1-N}\int_{\omega\times Z} \, \eta_\ep^2\ep^{-1}\partial_{z_1}\hat u^\varepsilon_2\, \psi\,dx_1dz\right|\leq C\eta_\ep^2\ep^{-1}\|\partial_{z_1} \hat{\bf  u}^\varepsilon\|_{L^2(\omega\times Z)^2}\leq C\eta_\ep,\\
\noame
\displaystyle
\left|-{2N\over 1-N}\int_{\omega\times Z} \, \eta_\ep\partial_{z_2}\hat u^\varepsilon_1\psi\,dx_1dz\right|\leq C\eta_\ep\|\partial_{z_2}\hat{\bf  u}^\varepsilon\|_{L^2(\omega\times Z)^2}\leq C\eta_\ep.
\end{array}$$
Thus, we have
$${2N\over 1-N}\int_{\omega\times Z} \, \eta_\ep^2\ep^{-1}\partial_{z_1}\hat u^\varepsilon_2\, \psi\,dx_1dz-{2N\over 1-N}\int_{\omega\times Z} \, \eta_\ep\partial_{z_2}\hat u^\varepsilon_1\psi\,dx_1dz\to 0.$$
\end{itemize}
Then, from the above convergences, we get that the limit variational formulation for $\hat w$ is given by
\begin{equation}\label{form_var_1_changvar_proof2_limit}\begin{array}{l}
\displaystyle L \int_{\omega\times Z}\nabla_{\lambda} \hat w \cdot \nabla_{\lambda} \psi\,dx_1dz  =\int_{\omega\times Z}g\, \psi\,dx_1dz.
\end{array}\end{equation}
By density, (\ref{form_var_1_changvar_proof2_limit}) holds for every function $\psi$ in $L^2(\omega;H^1_{\#}(Z))$ and is equivalent to problem (\ref{hom_system_crit_w}). We   remark that (\ref{form_var_1_changvar_proof2_limit}) admits a unique solution, and then the complete sequence $\hat w^\ep$ converges to the unique solution $\hat w(x_1,z)$. \\

\noindent Before passing to the next step, we need to prove that $\nabla_{\eta_\ep,\ep}\hat w^\ep$ converges strongly to $\nabla_\lambda \hat w$ in $L^2(\omega\times Z)^2$. To do this, we take $\hat w^\ep$ as test function in (\ref{form_var_1_changvar_hat2}) and $\hat w$ in (\ref{form_var_1_changvar_proof2_limit}). Then, it is easy to prove that
$$
\lim_{\ep\to 0}\displaystyle \int_{\omega\times Z}  |\nabla_{\eta_\ep,\ep}\hat w^\varepsilon|^2\,dx_1dz={1\over L}\int_{\omega\times Z}g\, \hat w\,dx_1dz=\int_{\omega\times Z}  |\nabla_{\lambda}\hat w|^2\,dx_1dz
$$
This together with the weak convergence of $\nabla_{\eta_\ep,\ep}\hat w^\ep$ to $\nabla_\lambda \hat w$ in $L^2(\omega\times Z)^2$, it gives the desired strong convergence.\\

\noindent {\it Step 3.} To prove  (\ref{hom_system_crit_T})$_{1,4}$, we take into account that the variational formulation (\ref{form_var_1_changvar_hat3}) can be written as follows
\begin{equation}\label{form_var_1_changvar_hat3pass}
\begin{array}{l}
\displaystyle \eta_\ep^2\ep^{-2}\int_{\omega\times Z}  \eta_\ep^{-2}\partial_{z_1}\hat T^\varepsilon\, \partial_{z_1} \phi\,dx_1dz+\eta_\ep^{-2}\int_{\omega\times Z}  \partial_{z_2}\hat T^\varepsilon\, \partial_{z_2}  \phi\,dx_1dz\\
\noame
\qquad\displaystyle
=- \eta_\ep\int_{\omega\times Z}\Big(\hat {\bf u}^\varepsilon\cdot  \nabla_{\eta_\ep,\ep}\Big)\hat T^\varepsilon  \phi\,dx_1dz
\\
\noame\displaystyle\qquad +D\int_{\omega\times Z}\nabla^\perp_{\eta_\ep,\ep}\hat w^\ep\cdot \nabla_{\eta_\ep,\ep}(\eta_\ep^{-2}\hat T^\varepsilon) \phi\,dx_1dz+    k \int_{\omega\times \hat \Gamma_1}G\, \phi\,dx_1d\sigma+ O_\ep,
\end{array}\end{equation}
where $O_\varepsilon$ tends to zero. Below, we pass to the limit in every terms:
\begin{itemize}
\item For the first two terms in the left-hand side of (\ref{form_var_1_changvar_hat3pass}), by using convergence (\ref{conv_T_crit_hat}), we get
$$ \eta_\ep^2\ep^{-2}\int_{\omega\times Z}  \eta_\ep^{-2}\partial_{z_1}\hat T^\varepsilon\, \partial_{z_1} \phi\,dx_1dz+\eta_\ep^{-2}\int_{\omega\times Z}  \partial_{z_2}\hat T^\varepsilon\, \partial_{z_2}  \phi\,dx_1dz\to \int_{\omega\times Z}\nabla_{\lambda} \hat T\cdot \nabla_{\lambda}  \phi\,dx_1dz.$$

\item For the first term in the right-hand side of (\ref{form_var_1_changvar_hat3pass}), by using estimates (\ref{estim_u_hat}) and (\ref{estim_T_hat}), we get
$$\left|- \eta_\ep\int_{\omega\times Z}\Big( \hat{\bf  u}^\varepsilon\cdot  \nabla_{\eta_\ep,\ep}\Big)\hat T^\varepsilon  \phi\,dx_1dz\right|\leq C\eta_\ep\| \hat {\bf u}^\varepsilon\|_{L^2(\omega\times Z)^2}\|\nabla_{\eta_\ep,\ep}\hat T^\varepsilon\|_{L^2(\omega\times Z)}\leq C\eta_\ep^3,$$
so we have
$$- \eta_\ep\int_{\omega\times Z}\Big(\hat{\bf  u}^\varepsilon\cdot  \nabla_{\eta_\ep,\ep}\Big)\hat T^\varepsilon  \phi\,dx_1dz\to 0.$$

\item For the second term in the right-hand side of (\ref{form_var_1_changvar_hat3pass}), by using convergences (\ref{conv_T_crit_hat}) and the strong convergence of $\nabla^\perp_{\eta_\ep,\ep}\hat w^\ep$ to $\nabla_\lambda^\perp \hat w$, we get
$$D\int_{\omega\times Z}\nabla^\perp_{\eta_\ep,\ep}\hat w^\ep\cdot \nabla_{\eta_\ep,\ep}(\eta_\ep^{-2}\hat T^\varepsilon) \phi\,dx_1dz
\to D \int_{\omega\times Z} \nabla^\perp_{\lambda}\hat w\cdot \nabla_{\lambda} \hat T \phi\,dx_1dz.$$
\end{itemize}

\noindent Then, using previous convergences, we get that the limit variational formulation for $\hat T$ is given by
\begin{equation}\label{form_var_1_changvar_proof3_limit1}
\begin{array}{l}
\displaystyle \int_{\omega\times Z}\nabla_{\lambda} \hat T\cdot \nabla_{\lambda}  \phi\,dx_1dz
=D\int_{\omega\times Z}\nabla_{\lambda}^\perp \hat w \cdot \nabla_{\lambda} \hat T  \phi\,dx_1dz+k \int_{\omega\times \hat \Gamma_1}G(z_1)  \phi\,dx_1d\sigma.
\end{array}\end{equation}
By density, (\ref{form_var_1_changvar_proof3_limit1}) holds for every function $\phi$ in $L^2(\omega;H^1_{\#}(Y))$ and is equivalent to problem (\ref{hom_system_crit_T}). We   remark that (\ref{form_var_1_changvar_proof3_limit1}) admits a unique solution, and then the complete sequence $\hat T^\ep$ converges to the unique solution $\hat T(x_1,z)$.

\end{proof}

\noindent Finally, we give the main result concerning the homogenized flow.

\begin{theorem}[Main result for the critical case] \label{mainthmcrit}
Consider $(\tilde{\bf  U}, \tilde W, \tilde \theta, \tilde p)$ given in Lemma \ref{lem_asymp_crit}. Let us define the average velocity, microrotation and temperature respectively by
$${\bf  U}^{av}(x_1)=\int_0^{h_{\rm max}}   \tilde{\bf  U}(x_1,z_2)\,dz_2,\quad W^{av}(x_1)=\int_0^{h_{\rm max}} \tilde W(x_1,z_2)\,dz_2,\quad T^{av}(x_1)=\int_0^{h_{\rm max}} \tilde \theta(x_1,z_2)\,dz_2.$$
\noindent We have the following:
\begin{itemize}
\item   The average velocity is given by
\begin{equation}\label{Velocity_thm_crit}
\begin{array}{lll}
\displaystyle U_1^{av}=a_\lambda{1-N\over Pr}\left(q_{-1/2}-q_{1/2}+Pr\int_{-1/2}^{1/2}f_1(\xi)\,d\xi\right),&  U_2^{av}(x_1)=0&\hbox{in }\omega,\end{array}
\end{equation}
where $a_\lambda \in \mathbb{R}$ is given by
$$a_\lambda=\int_{Z}|\nabla_\lambda{\bf u}^{bl}(z)|^2\,dz,$$
with $({\bf u}^{bl}, \pi^{bl}) \in H^1_\#(Z)^2\times L^2_{\#}(Z)$ the solution of the local Stokes problem
\begin{equation}\label{local_problems_critu}
\left\{\begin{array}{rl}\displaystyle
-\Delta_{\lambda} {\bf u}^{bl}+\nabla_{\lambda}\pi^{bl}={\bf e}_1&\hbox{ in }Z,\\
\noame
{\rm div}_{\lambda} {\bf u}^{bl}=0&\hbox{ in }Z,\\
\noame
u^{bl}=0&\hbox{ on }z_2=\{0,h(z_1)\},\\
\noame
\displaystyle\int_{Z}u_2^{bl}(z)dz=0.
\end{array}\right.
\end{equation}

\item  The pressure $\tilde p$ is given by
\begin{equation}\label{Pcritic}\tilde p(x_1) =q_{-1/2}-\left(q_{-1/2}-q_{1/2}+Pr\int_{-1/2}^{1/2}f_1(\xi)\,d\xi\right)\left(x_1+{1\over 2}\right)+Pr\int_{-1/2}^{x_1}f(\xi)\,d\xi\quad \hbox{in } \omega.
\end{equation}

 \item  The average   microrotation is given by
\begin{equation}\label{Micro_thm_crit}
 W^{av}(x_1)=b_\lambda{ 1\over L} g(x_1)\quad\hbox{in }\omega,
\end{equation}
where $b_\lambda\in \mathbb{R}$ is given by
$$ b_\lambda=\int_{Z}|\nabla_\lambda w^{bl}(z)|^2\,dz,$$
with $w^{bl} \in H^1_\#(Z)$ the solution of the local  Laplace problem
\begin{equation}\label{local_problems_critw}
\left\{\begin{array}{rl}\displaystyle
-\Delta_\lambda w^{bl}=1&\hbox{ in }Z,\\
\noame
w^{bl}=0&\hbox{ on }z_2=\{0,h(z_1)\}.
\end{array}\right.
\end{equation}

\item  The average temperature is given by
\begin{equation}\label{crit_average_temp_local}\displaystyle T^{av}(x_1)=\int_Z T^{bl}(x_1,z)\,dz\quad\hbox{in }\omega,
\end{equation}
with $ T^{bl}\in L^2(\omega;H^1_{\#}(Z))$ the unique solution of the nonlinear local problem
$$\left\{\begin{array}{rl} \displaystyle -\Delta_{\lambda}  T^{bl} -{D\over L}g(x_1)(\nabla_{\lambda}^\perp w^{bl}\cdot \nabla_{\lambda}) T^{bl}=0& \hbox{ in }\quad\omega\times Z,\\
\noame
 T^{bl}=0&\hbox{ on }\quad\omega\times \hat \Gamma_0 ,\\
\noame
\nabla_\lambda  T^{bl}\cdot {\bf n} =k\,G(z_1)&\hbox{ on }\quad\omega\times \hat \Gamma_1.
\end{array}\right.
$$

\end{itemize}
\end{theorem}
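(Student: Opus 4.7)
The plan is to reduce each statement in Theorem \ref{mainthmcrit} to the unfolded limit problems established in Theorem \ref{thm_limit_critical}, exploit their linearity (or affine dependence on $x_1$) to separate variables, and then integrate in $z$ and use the boundary data on $\tilde p$.

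First, I would treat the velocity and pressure together. The limit Stokes system \eqref{hom_system_crit_u} is linear in the data $(f_1(x_1)-\frac{1}{Pr}\partial_{x_1}\tilde p(x_1)){\bf e}_1$, which acts as a parameter in $x_1$. Hence by the unique solvability of the local cell problem \eqref{local_problems_critu}, one can write
\begin{equation*}
\hat{\bf u}(x_1,z)=(1-N)\Bigl(f_1(x_1)-\tfrac{1}{Pr}\partial_{x_1}\tilde p(x_1)\Bigr){\bf u}^{bl}(z),\qquad \hat q(x_1,z)=(1-N)\Bigl(f_1(x_1)-\tfrac{1}{Pr}\partial_{x_1}\tilde p(x_1)\Bigr)\pi^{bl}(z),
\end{equation*}
and in particular $\int_{Z}\hat u_2(x_1,z)\,dz=0$ is automatic from the constraint in \eqref{local_problems_critu}. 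Using ${\bf u}^{bl}$ itself as test function in the weak form of \eqref{local_problems_critu} gives $\int_{Z}|\nabla_\lambda {\bf u}^{bl}|^2\,dz=\int_{Z}u_1^{bl}\,dz$, so $a_\lambda=\int_Z u_1^{bl}\,dz$, which is strictly positive by uniqueness.

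Second, I would determine $\tilde p$. The constraint $\partial_{x_1}\bigl(\int_{Z}\hat u_1\,dz\bigr)=0$ in $\omega$ combined with the displayed expression for $\hat u_1$ yields
\begin{equation*}
a_\lambda(1-N)\Bigl(f_1(x_1)-\tfrac{1}{Pr}\partial_{x_1}\tilde p(x_1)\Bigr)=:U_1^{av}=\mathrm{constant\ in\ }x_1,
\end{equation*}
so $\partial_{x_1}\tilde p(x_1)=Pr\,f_1(x_1)-\tfrac{Pr}{a_\lambda(1-N)}U_1^{av}$. Integrating from $-1/2$ and imposing the two boundary conditions $\tilde p(-1/2)=q_{-1/2}$, $\tilde p(1/2)=q_{1/2}$ fixes the constant $U_1^{av}$ to exactly the value stated in \eqref{Velocity_thm_crit}; re-integrating then gives the explicit pressure \eqref{Pcritic}. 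The identity $U_2^{av}=0$ is immediate from $\tilde U_2\equiv 0$ in Lemma \ref{lem_asymp_crit}(i), and $\int_Z \hat u_1\,dz=\int_0^{h_{\max}}\tilde U_1\,dz_2=U_1^{av}$ by the same lemma.

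Third, for the microrotation I would apply the same separation-of-variables idea to the linear Laplace problem \eqref{hom_system_crit_w}: by uniqueness of the solution to \eqref{local_problems_critw},
\begin{equation*}
\hat w(x_1,z)=\tfrac{1}{L}g(x_1)\,w^{bl}(z).
\end{equation*}
Testing \eqref{local_problems_critw} with $w^{bl}$ yields $b_\lambda=\int_Z |\nabla_\lambda w^{bl}|^2\,dz=\int_Z w^{bl}\,dz$, and then $W^{av}(x_1)=\int_Z \hat w\,dz=b_\lambda L^{-1}g(x_1)$, which is \eqref{Micro_thm_crit}. Finally, for the temperature, I would substitute the above expression for $\hat w$ into the nonlinear problem \eqref{hom_system_crit_T}; since $\nabla_\lambda^\perp \hat w=\tfrac{1}{L}g(x_1)\nabla_\lambda^\perp w^{bl}$, the problem decouples in $x_1$ (the variable $x_1$ enters only through the parameter $g(x_1)$ and the inhomogeneous Neumann datum $k\,G(z_1)$ which is already independent of $x_1$), so $\hat T(x_1,z)=T^{bl}(x_1,z)$ with $T^{bl}$ solving the stated nonlinear local problem, and $T^{av}(x_1)=\int_Z T^{bl}(x_1,z)\,dz$. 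The only mildly delicate point is to justify that this nonlinear local problem is well-posed with $g(x_1)$ as a bounded parameter, which follows from the same arguments used at the $\varepsilon$-level (see \cite{Luka}, \cite{kalluksie}) since the convective coefficient $\tfrac{D}{L}g(x_1)\nabla_\lambda^\perp w^{bl}$ is divergence-free in $z$.
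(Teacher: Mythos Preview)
Your proposal is correct and follows essentially the same approach as the paper: separate variables in the unfolded limit problems \eqref{hom_system_crit_u}--\eqref{hom_system_crit_T} via the local cell problems, use the averaging identities from Lemma \ref{lem_asymp_crit} to pass from $\int_Z\hat{\cdot}\,dz$ to the $U^{av},W^{av},T^{av}$ quantities, and then determine $\tilde p$ by integrating the constraint $\partial_{x_1}U_1^{av}=0$ with the two endpoint conditions. Your extra remarks (the energy identity $a_\lambda=\int_Z u_1^{bl}\,dz$, the analogous $b_\lambda=\int_Z w^{bl}\,dz$, and the divergence-free structure of the convective coefficient in the $T^{bl}$ problem) are implicit in the paper but make the argument clearer. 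One small slip: comparing coefficients in \eqref{hom_system_crit_u} with \eqref{local_problems_critu} gives $\hat q(x_1,z)=Pr\bigl(f_1(x_1)-\tfrac{1}{Pr}\partial_{x_1}\tilde p(x_1)\bigr)\pi^{bl}(z)$, not the $(1-N)$ factor you wrote; this is harmless since $\hat q$ is not used in any of the averaged formulas.
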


\begin{proof}
First, we proceed to  eliminate the microscopic variable $z$ in the effective linear problems (\ref{hom_system_crit_u}) and (\ref{hom_system_crit_w}). To do that, we consider the following identification
$$\begin{array}{l}
\hat{\bf  u}(x_1,z)= (1-N)\left(f_1(x_1)-{1\over Pr}\partial_{x_1} \tilde P(x_1)\right) {\bf u}^{bl}(z),\quad \hat q(x_1,z)=Pr \left(f_1(x_1)-{1\over Pr}\partial_{x_1}\tilde P(x_1)\right) \pi^{bl}(z), \\
\noame
\displaystyle \hat w(x_1,z)={g(x_1)\over L} w^{bl}(z),
\end{array}
$$
where $({\bf u}^{bl}, \pi^{bl})$ and $w^{bl}$ satisfies (\ref{local_problems_critu}) and (\ref{local_problems_critw}), respectively.\\

\noindent From the identities for velocity $\int_Z \hat u_1(x_1,z)\,dz=\int_0^{h_{\rm max}}\tilde U_1(x_1,z_2)\,dz_2$ and $\int_Z \hat u_2\,dz=0$, and for the microrotation $\int_Z \hat w(x_1,z)\,dz=\int_0^{h_{\rm max}}\tilde W(x_1,z_2)\,dz_2$ given in Lemma \ref{lem_asymp_crit}, by linearity we deduce that ${\bf   U}^{av}$  is given by
$$U_1^{av}=a_\lambda(1-N)\left(f_1(x_1)-{1\over Pr}\partial_{x_1}\tilde p(x_1)\right)\,\quad U_2^{av}=0,\quad \hbox{in }\omega,$$
 and $ W^{av}$ is given by (\ref{Micro_thm_crit}).\\

\noindent Next, the divergence condition with respect to the variable $x_1$ given in (\ref{div_x_crit_tilde}) together with the expression of $ U_1^{av}$ gives that \begin{equation}\label{Reynolds_crit}
\begin{array}{l}
U_1^{av}=a_\lambda(1-N)\left(f_1(x_1)-{1\over Pr}\partial_{x_1}\tilde p(x_1)\right)=C_1,\quad C_1\in\mathbb{R}.
\end{array}
\end{equation}
Then, integrating with respecto to $x_1$, and taking into account that $\tilde p(-1/2)=q_{-1/2}$,
 it holds
$$ \tilde p(x_1) =q_{-1/2}-{Pr\over a_\lambda(1-N)}C_1\left(x_1+{1\over 2}\right)+Pr\int_{-1/2}^{x_1}f(\xi)\,d\xi.$$
Finally, since $\tilde p(1/2)=q_{1/2}$, we deduce
$$C_1={a_\lambda(1-N)\over Pr}\left(q_{-1/2}-q_{1/2}+Pr\int_{-1/2}^{1/2}f_1(\xi)\,d\xi\right).$$
This  implies (\ref{Pcritic}). Then, by using the expression of $\tilde p$, we deduce that the average velocity $U_1^{av}$ is given by (\ref{Velocity_thm_crit}).\\

\noindent Finally, the formula for $ T^{av}$ follows from (\ref{Velocity_thm_crit})$_3$ and the identity $\int_Z \hat T(x_1,z)\,dz=\int_0^{h_{\rm max}}\tilde \theta(x_1,z_2)\,dz_2$ by renaming $T^{bl}\equiv \hat T$.

\end{proof}

\section{Homogenized model in the subcritical case}\label{sec:subcritical}
It corresponds to the case when the wavelength of the roughness is much greater than
the film thickness, i.e., $\eta_\ep\ll \ep$,  which is equivalent to $\lambda=0$.\\

\noindent We start by giving some compactness results about the behavior of the extended sequences $( \tilde{\bf  U}^\ep, \tilde W^\ep, \tilde \theta^\ep, p^\ep_0, \tilde p^\ep_1)$ and the related unfolding functions $( \hat{\bf  u}^\ep, \hat w^\ep, \hat T^\ep, \hat p^\ep_0, \hat p^\ep_1)$ satisfying the {\it a priori} estimates given in Lemmas \ref{lemma_estimates2} and Lemma \ref{estimates_hat} respectively.
\begin{lemma}\label{lem_asymp_sub}
For a subsequence of $\ep$ still denote by $\ep$, we have the following convergence results:
\begin{itemize}
\item[(i)] {\it (Velocity)} There exist $ \tilde{\bf  U}=(\tilde U_{1},\tilde U_{2})\in H^1(0,h_{\rm max};L^2(\omega)^2)$, with $ \tilde{\bf  U}=0$ on $z_2=\{0,h_{\rm max}\}$ and $\tilde U_{2}=0$, such that
\begin{eqnarray}
&\displaystyle \tilde{\bf  U}^\ep\rightharpoonup  \tilde{\bf  U}\quad\hbox{  in  }H^1(0,h_{\rm max};L^2(\omega)^2),\label{conv_u_sib_tilde}\\
\noame
&\displaystyle \partial_{x_1}\left(\int_0^{h_{\rm max}}\tilde U_1(x_1,y_2)\,dy_2\right)=0\quad \hbox{  in  }\omega. &\label{div_x_sub_tilde}
 \end{eqnarray}
and  $  \hat{\bf  u}=(\hat u_1, \hat u_{2})\in H^1(0,h(z_1);L^2_\#(\omega\times Z')^2)$, with $\hat {\bf u}=0$ on $z_2=\{0,h(z_1)\}$ and $\hat u_2=0$, such that it hold $\int_{Z} \hat{\bf  u}(x_1,z)dz$ $=\int_0^{h_{\rm max}} \tilde{\bf  U}(x_1,z_2)\,dz_2$ with $\int_{Z}\hat u_{2}(x_1,z)\,dz=0$, and moreover
\begin{eqnarray}
&
 \hat{\bf  u}^\ep\rightharpoonup  \hat {\bf u}\quad\hbox{  in  }H^1(0,h(z_1);L^2(\omega\times Z')^2),&\label{conv_u_sub_hat}\\
\noame
&\displaystyle \partial_{z_1}\left(\int_0^{h(z_1)}{ \hat u}_1\,dz_2\right)=0\quad\hbox{  in  }\omega\times Z',&\label{div_sub_hat1}\\
\noame
&\displaystyle \partial_{x_1}\left(\int_{Z}\hat u_1(x_1,z)\,dz\right)=0\quad \hbox{  in  }\omega\,.&\label{div_sub_hat2}
\end{eqnarray}
\item[(ii)] {\it (Microrotation)} There exist $\tilde W\in H^1(0,h_{\rm max};L^2(\omega))$, with $\tilde W=0$ on $z_2=\{0,h_{\rm max}\}$, such that
\begin{eqnarray}
&\displaystyle \tilde W^\ep\rightharpoonup \tilde W\quad\hbox{ in  }H^1(0,h_{\rm max};L^2(\omega)),\label{conv_w_sub_tilde}
\end{eqnarray}
and  $\hat w\in H^1(0,h(z_1);L^2_\#(\omega\times Z'))$, with $\hat w=0$ on $z_2=\{0,h(z_1)\}$ such that it hold $\int_{Z}\hat w(x_1,z)dz=\int_0^{h_{\rm max}}\tilde W(x_1,z_2)\,dz_2$, and moreover
\begin{eqnarray}
\hat w^\ep\rightharpoonup \hat w\quad \hbox{in } H^1(0,h(z_1);L^2(\omega\times Z')).&\label{conv_w_sub_hat}
\end{eqnarray}
\item[(iii)] {\it (Temperature)} There exist $\tilde \theta\in H^1(0,h_{\rm max};L^2(\omega))$, with $\tilde\theta=0$ on $z_2=\{0\}$, such that
\begin{eqnarray}
&\displaystyle \eta_\ep^{-{2}}\tilde \theta^\ep\rightharpoonup \tilde\theta\quad \hbox{in } H^1(0,h_{\rm max};L^2(\omega)),\label{conv_T_sub_tilde}
\end{eqnarray}
and  $\hat T\in H^1(0,h(z_1);L^2_\#(\omega\times Z'))$, with $\hat T=0$ on $z_2=\{0\}$, such that   $\int_{Z}\hat T(x_1,z)dz$ $=\int_0^{h_{\rm max}}\tilde \theta(x_1,z_2)\,dz_2$, and moreover
\begin{eqnarray}
\eta_\ep^{-2}\hat T^\ep\rightharpoonup \hat T\quad \hbox{in } H^1(0,h(z_1);L^2(\omega\times Z')).&\label{conv_T_sub_hat}
\end{eqnarray}
\item[(iv)] {\it (Pressure)} There  exist three functions  $\tilde p\in L^2_0(\omega)\cap H^1(\omega)$, independent of $z_2$ with with $\tilde p(i)=q_i$, $i=-1/2,1/2$, $\hat p_0\in L^2(\omega;H^1_\#(Z'))$ and $\hat p_1\in L^2(\omega;L^2_\#(Z))$ such that
\begin{eqnarray}
&\displaystyle\eta_\varepsilon^{2}  p^\ep_0\rightharpoonup  \tilde p\quad \hbox{in } H^1(\omega),& \label{conv_P01_sub1}\\
\noame
&\displaystyle \eta_\varepsilon^{2}\ep^{-1} \partial_{z_1}\hat p^\ep_0\rightharpoonup  \partial_{z_1}\tilde p+\partial_{z_1} \hat p_0\quad \hbox{in } L^2(\omega;L^2(Z')),\quad \eta_\ep \hat p^\ep_1\rightharpoonup  \hat p_1\quad \hbox{in } L^2(\omega;L^2(Z)).& \label{conv_P01_sub2}
\end{eqnarray}
\end{itemize}
\end{lemma}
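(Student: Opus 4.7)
The proof follows the strategy of Lemma 4.1 for the critical case, with the essential modification that in the subcritical regime $\eta_\ep \ll \ep$ one loses the uniform $L^2$--bound on $\partial_{z_1}$ of the unfolded unknowns (recall $\|\partial_{z_1}\hat \varphi^\ep\|_{L^2}\leq C\ep\eta_\ep^{-1}\to\infty$ from Lemma \ref{estimates_hat}), so the unfolded limits will only be $H^1$ in $z_2$ and $L^2$ in $(x_1,z_1)$. The plan is to first handle the extended rescaled triples $(\tilde{\bf U}^\ep,\tilde W^\ep,\eta_\ep^{-2}\tilde\theta^\ep)$, then their unfolded counterparts, and finally the pressures.

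For the extensions, the bounds of Lemma \ref{lemma_estimates2} yield weak convergence in $H^1(0,h_{\rm max};L^2(\omega))$ exactly as in Lemma \ref{lem_asymp_crit}; the boundary conditions on $\{z_2=0,h_{\rm max}\}$ follow by trace continuity and the extension-by-zero on the roughness. The vanishing $\tilde U_2=0$ is obtained by passing to the limit in $\eta_\ep\,{\rm div}_{\eta_\ep}\tilde{\bf U}^\ep=0$, which gives $\partial_{z_2}\tilde U_2=0$, combined with the boundary trace. The condition \eqref{div_x_sub_tilde} is then derived by integrating the divergence equation over $z_2\in(0,h_{\rm max})$ and using the vanishing traces.

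For the unfolded functions, the estimates \eqref{estim_u_hat}--\eqref{estim_T_hat} give uniform $L^2$-bounds and uniform control of $\partial_{z_2}$, yielding weak limits $\hat{\bf u},\hat w, \hat T$ in $H^1(0,h(z_1);L^2(\omega\times Z'))$; the $Z'$-periodicity in $z_1$ is standard for the unfolding (see \cite{Anguiano_SG}). The new feature, $\hat u_2=0$, is proved as follows: writing the rescaled divergence as
\begin{equation*}
\eta_\ep\ep^{-1}\,\partial_{z_1}\hat u_1^\ep+\partial_{z_2}\hat u_2^\ep=0,
\end{equation*}
the first term tends to zero in $\mathcal{D}'$ since $\hat u_1^\ep$ is bounded in $L^2$ while $\eta_\ep/\ep\to 0$, so $\partial_{z_2}\hat u_2=0$ in the limit, and the Dirichlet conditions on $\hat u_2$ at $z_2\in\{0,h(z_1)\}$ force $\hat u_2\equiv 0$. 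The identity \eqref{div_sub_hat1} is obtained by integrating the same divergence equation over $z_2\in(0,h(z_1))$ (the Leibniz boundary terms vanish because $\hat{\bf u}^\ep$ is zero there), giving $\partial_{z_1}\int_0^{h(z_1)}\hat u_1^\ep\,dz_2=0$ for each $\ep$, which passes to the limit. The integral identifications $\int_Z \hat{\bf u}\,dz=\int_0^{h_{\rm max}}\tilde{\bf U}\,dz_2$ (and analogues for $\hat w$, $\hat T$) follow by testing with $z$-independent functions, and \eqref{div_sub_hat2} combines these with \eqref{div_x_sub_tilde}.

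Finally, the pressure convergences \eqref{conv_P01_sub1}--\eqref{conv_P01_sub2} are obtained by verbatim repetition of step (iv) in the proof of Lemma \ref{lem_asymp_crit}: the decomposition of Proposition \ref{prop_decomposition} together with Corollary \ref{Lem_estim_pressures} yields the weak compactness of $\eta_\ep^2 p_0^\ep$ in $H^1(\omega)$ and of $\eta_\ep\hat p_1^\ep$ in $L^2(\omega\times Z)$, while the standard unfolding compactness result applied to $\eta_\ep^2 p_0^\ep\in H^1(\omega)$ produces the additional macroscopic gradient $\partial_{z_1}\hat p_0$. The boundary values $\tilde p(\pm 1/2)=q_{\pm 1/2}$ and the zero-mean property of $\tilde p$ are recovered as in the critical case, using the periodicity of $h$ and the vanishing $\|\eta_\ep^2\tilde p_1^\ep\|_{L^1}\to 0$. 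The only place requiring genuine care is the distributional argument that forces $\hat u_2=0$, since it relies on the weak-star decay of $\eta_\ep\ep^{-1}\partial_{z_1}\hat u_1^\ep$, which is particular to the subcritical regime and has no counterpart in Lemma \ref{lem_asymp_crit}.
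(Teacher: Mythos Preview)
Your proposal is correct and follows essentially the same approach as the paper's proof: both reduce to the critical-case argument of Lemma~\ref{lem_asymp_crit} except for the treatment of the unfolded divergence, where one uses $\eta_\ep/\ep\to 0$ in $\eta_\ep\ep^{-1}\partial_{z_1}\hat u_1^\ep+\partial_{z_2}\hat u_2^\ep=0$ to force $\hat u_2=0$, and then integrates this identity in $z_2$ (the paper does this weakly by testing with $z_2$-independent $\varphi$ after multiplying by $\ep\eta_\ep^{-1}$, which is equivalent to your direct integration using the Leibniz rule and the zero traces) to obtain \eqref{div_sub_hat1}. The remaining parts (ii)--(iv) are, as you note and as the paper states, verbatim repetitions of the critical case.
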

\begin{proof}
Proof The proof of $(i)$ is similar to the critical case, but we have to take into account that
applying the unfolded change of variables  to the divergence condition ${\rm div}_{\eta_\ep}(\tilde u_\varepsilon)=0$ and multiplying by $\eta_\ep$, we get
\begin{equation}\label{div_proof}
{\eta_\varepsilon\over \varepsilon}\partial_{z_1}\hat u^\ep_{1} +\partial_{z_2}\hat u^\ep_{2}=0.
\end{equation}
Passing to the limit, since $\eta_\ep\ll\ep$, we get $\partial_{z_2}\hat u_2=0$, which means that $\hat u_2$ is independent of $z_2$. Due to the boundary conditions on the top and bottom, it holds that $\hat u_2=0$.  Now, multiplying (\ref{div_proof}) by $\varepsilon\eta_\ep^{-1}\varphi$ with $\varphi$ independent of $z_2$ and integrating by parts, we get
$$\int_{\omega\times Z'}\left(\int_0^{h(z_1)}\hat u^\ep_{1}\,dz_2\right)\partial_{z_1}\varphi\,dx_1dz_1=0.$$

\noindent Passing to the limit and integrating by parts, we get (\ref{div_sub_hat1}).
For more details, we refer the reader to the proof of Lemmas 5.2-i) and 5.4-ii) in \cite{Anguiano_SG} (see also \cite{grauMicRough}).
The proofs of $(ii)$, $(iii)$ and $(iv)$ are similar to the critical case, so we omit it.

\end{proof}

\noindent Using previous convergences, in the following theorem we give the two-pressured homogenized
system satisfied by $(\hat{\bf  u},\hat w, \tilde P, \hat T)$.

\begin{theorem}[Limit unfolded problems]\label{thm_sub1}
In the case $\eta_\ep\ll \ep$, then the functions $ \hat{\bf  u}, \hat w, \hat T$ and $\tilde p$ given in Lemma \ref{lem_asymp_sub} satisfy
\begin{itemize}
\item  $( \hat{\bf  u}, \tilde p)\in H^1(0,h(z_1);L^2_\#(\omega\times Z'))\times  (L^2_0(\omega)\cap H^1(\omega))$ with $\hat u_2=0$ is the unique solution of the two-pressure homogenized reduced Stokes problem
\begin{equation}\label{hom_system_sub_u}
\left\{\begin{array}{rl}
\displaystyle
-{1\over 1-N}\partial_{z_2}^2 { \hat u_1}+{1\over Pr}\partial_{z_1}\hat p_0=f_1(x_1)-{1\over Pr}\partial_{x_1}\tilde  p(x_1)&\hbox{ in }\omega\times Z,\\
\noame
\displaystyle \partial_{z_1}\left(\int_0^{h(z_1)}\hat u_1\,dz_2\right)=0&\hbox{ in }\omega\times Z',\\
\noame
{ \hat u_1}=0&\hbox{ on }\omega \times (\hat \Gamma_0\cup \hat \Gamma_1),\\
\displaystyle \partial_{x_1}\left(\int_{Z}\hat u_1(x_1,z)\,dz\right)=0&\hbox{ in }\omega,\\
\noame
\tilde p(i)=q_i&\ i=-1/2,1/2,\\
\noame
\hat p_0\in L^2_{\#}(\omega\times Z').
\end{array}\right.
\end{equation}

\item   $\hat w\in L^2(\omega;H^1_{\#}(Z))$ is the unique solution of the Laplace problem
\begin{equation}\label{hom_system_sub_w}
\left\{\begin{array}{rl}
\displaystyle
-L\partial_{z_2}^2 \hat w =g(x_1)&\hbox{ in }\omega\times Z,\\
\noame
\hat w=0&\hbox{ on }\omega \times (\hat \Gamma_0\cup \hat \Gamma_1),
\end{array}\right.
\end{equation}
\item   $\hat T\in L^2(\omega;H^1_{\#}(Z))$ is the unique solution of the nonlinear problem
\begin{equation}\label{hom_system_sub_T}
\left\{\begin{array}{rl}
 \partial_{z_2}^2 \hat T=0&\hbox{ in }\omega\times Z,\\
\noame
\hat T=0&\hbox{ on }z_2=\omega \times \hat \Gamma_0,\\
\noame
\partial_{z_2} \hat T=k\, G(z_1)&\hbox{ on }\omega \times \hat \Gamma_1.
\end{array}\right.
\end{equation}
\end{itemize}
\end{theorem}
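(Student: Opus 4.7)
The plan is to pass to the limit in the unfolded variational formulations (\ref{form_var_1_changvar_hat1})--(\ref{form_var_1_changvar_hat3}) under the regime $\eta_\varepsilon/\varepsilon\to 0$. In this regime every coefficient of the form $\eta_\varepsilon^a\varepsilon^{-b}$ with $a>b$ vanishes, and combined with the \emph{a priori} bounds of Lemma \ref{estimates_hat} and Corollary \ref{Lem_estim_pressures} this makes every term carrying $\eta_\varepsilon^2\varepsilon^{-2}$, $\eta_\varepsilon^2\varepsilon^{-1}$ or $\eta_\varepsilon\varepsilon^{-1}$ disappear. Consequently the horizontal derivative $\partial_{z_1}$ decouples from the leading-order equations and only the vertical diffusion $\partial_{z_2}$, the pressure pieces $\partial_{x_1}\tilde p$ and $\partial_{z_1}\hat p_0$, and the data $f_1$, $g$, $G$ survive.

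For the velocity I would take $\varphi(x_1,z)\in\mathcal{D}(\omega;C^\infty_{\#}(Z)^2)$ as test function in (\ref{form_var_1_changvar_hat1}) without any rescaling. The horizontal Laplace piece is controlled by $\eta_\varepsilon^2\varepsilon^{-2}\|\partial_{z_1}\hat{\bf u}^\varepsilon\|_{L^2}\leq C\eta_\varepsilon/\varepsilon$, hence vanishes; the inertial, micropolar coupling and buoyancy contributions vanish analogously through the bounds (\ref{estim_u_hat})--(\ref{estim_T_hat}) together with their prefactors $\eta_\varepsilon^2\varepsilon^{-1}$, $\eta_\varepsilon$ and $\eta_\varepsilon^2$. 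From (\ref{conv_P01_sub2}) one has $\eta_\varepsilon^2\varepsilon^{-1}\partial_{z_1}\hat p_0^\varepsilon\rightharpoonup\partial_{x_1}\tilde p+\partial_{z_1}\hat p_0$; the term $-(1/Pr)\int\eta_\varepsilon^2\varepsilon^{-1}\hat p_1^\varepsilon\partial_{z_1}\varphi_1$ is rewritten as $\eta_\varepsilon\cdot(\eta_\varepsilon/\varepsilon)\hat p_1^\varepsilon\partial_{z_1}\varphi_1$ and vanishes, while $-\eta_\varepsilon\hat p_1^\varepsilon\partial_{z_2}\varphi_2$ passes to $-\hat p_1\partial_{z_2}\varphi_2$. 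Choosing $\varphi_2=0$ in the limit identity and invoking the $\partial_{z_2}$-convergence of $\hat{\bf u}^\varepsilon$ yields the reduced equation (\ref{hom_system_sub_u})$_1$; choosing $\varphi_1=0$ produces $\partial_{z_2}\hat p_1=0$, consistently with the fact that $\hat p_0$ does not appear in the $z_2$-direction. The divergence constraints (\ref{hom_system_sub_u})$_{2,4}$ and the boundary and trace conditions are inherited from Lemma \ref{lem_asymp_sub}.

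For the microrotation I would use $\psi(x_1,z)\in\mathcal{D}(\omega;C^\infty_{\#}(Z))$ in (\ref{form_var_1_changvar_hat2}). The $\partial_{z_1}$-diffusion piece vanishes through $\eta_\varepsilon^2\varepsilon^{-2}$, the mass term through $\eta_\varepsilon^2$, the convective terms through $\eta_\varepsilon^2\varepsilon^{-1}$ and $\eta_\varepsilon$, and the rotation-coupling terms through the same orders. Only $L\int\partial_{z_2}\hat w^\varepsilon\partial_{z_2}\psi$ and $\int g\psi$ survive, yielding (\ref{hom_system_sub_w}). For the temperature I would take $\phi(x_1,z)\in\mathcal{D}(\omega;C^\infty_{\#}(Z))$ in (\ref{form_var_1_changvar_hat3}); using (\ref{conv_T_sub_hat}), the horizontal diffusion term rewrites as $(\eta_\varepsilon^2/\varepsilon^2)\int\partial_{z_1}(\eta_\varepsilon^{-2}\hat T^\varepsilon)\partial_{z_1}\phi\to 0$, whereas the vertical one passes to $\int\partial_{z_2}\hat T\,\partial_{z_2}\phi$. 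The convective term is $O(\eta_\varepsilon^3)$ by (\ref{estim_u_hat}) and (\ref{estim_T_hat}); the boundary source $k\int_{\omega\times\hat\Gamma_1}G\,\phi$ is untouched. This produces $-\partial_{z_2}^2\hat T=0$ together with the Neumann condition (\ref{hom_system_sub_T})$_3$. Uniqueness of each of the three limit problems is standard and upgrades the subsequential convergences to the full sequence.

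The main obstacle is verifying that the micropolar heat-conduction coupling $D\int\nabla^\perp_{\eta_\varepsilon,\varepsilon}\hat w^\varepsilon\cdot\nabla_{\eta_\varepsilon,\varepsilon}(\eta_\varepsilon^{-2}\hat T^\varepsilon)\phi$ actually disappears in the subcritical limit, in sharp contrast with the critical case where it survives and generates the nonlinear term in (\ref{hom_system_crit_T}). The key is to expand $\nabla_{\eta_\varepsilon,\varepsilon}=(\eta_\varepsilon\varepsilon^{-1}\partial_{z_1},\partial_{z_2})$ and $\nabla^\perp_{\eta_\varepsilon,\varepsilon}=(\partial_{z_2},-\eta_\varepsilon\varepsilon^{-1}\partial_{z_1})$ so that the scalar product $\nabla^\perp_{\eta_\varepsilon,\varepsilon}\hat w^\varepsilon\cdot\nabla_{\eta_\varepsilon,\varepsilon}(\eta_\varepsilon^{-2}\hat T^\varepsilon)$ collects an explicit factor $\eta_\varepsilon\varepsilon^{-1}$ in \emph{each} of its two summands; combined with the bounds of Lemma \ref{estimates_hat} on the relevant derivatives this forces the term to vanish, and no strong-convergence argument for $\nabla^\perp\hat w^\varepsilon$ is needed here (unlike in the critical case).
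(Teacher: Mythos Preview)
Your overall strategy---passing to the limit directly in the unfolded variational formulations (\ref{form_var_1_changvar_hat1})--(\ref{form_var_1_changvar_hat3}) and discarding every term whose prefactor combined with the a priori bounds is $o(1)$---is exactly the one the paper uses, and your treatment of the velocity and microrotation equations is correct (in fact slightly more informative than the paper's, since by keeping a nonzero $\varphi_2$ you recover the auxiliary relation $\partial_{z_2}\hat p_1=0$ before specialising to $\varphi_2=0$).

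The genuine gap is in the last paragraph, where you claim that the micropolar heat--conduction term
\[
D\int_{\omega\times Z}\nabla^\perp_{\eta_\varepsilon,\varepsilon}\hat w^\varepsilon\cdot\nabla_{\eta_\varepsilon,\varepsilon}(\eta_\varepsilon^{-2}\hat T^\varepsilon)\,\phi
\]
vanishes purely by the bounds of Lemma~\ref{estimates_hat}. Expanding as you indicate gives the two summands
\[
\frac{\eta_\varepsilon}{\varepsilon}\,\partial_{z_2}\hat w^\varepsilon\,\partial_{z_1}(\eta_\varepsilon^{-2}\hat T^\varepsilon)
\qquad\text{and}\qquad
-\,\frac{\eta_\varepsilon}{\varepsilon}\,\partial_{z_1}\hat w^\varepsilon\,\partial_{z_2}(\eta_\varepsilon^{-2}\hat T^\varepsilon).
\]
But from (\ref{estim_w_hat}) and (\ref{estim_T_hat}) one has $\|\partial_{z_2}\hat w^\varepsilon\|_{L^2}\le C$, $\|\partial_{z_1}(\eta_\varepsilon^{-2}\hat T^\varepsilon)\|_{L^2}\le C\varepsilon\eta_\varepsilon^{-1}$ for the first summand, and $\|\partial_{z_1}\hat w^\varepsilon\|_{L^2}\le C\varepsilon\eta_\varepsilon^{-1}$, $\|\partial_{z_2}(\eta_\varepsilon^{-2}\hat T^\varepsilon)\|_{L^2}\le C$ for the second. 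In both cases the explicit factor $\eta_\varepsilon/\varepsilon$ is exactly cancelled by the $\varepsilon/\eta_\varepsilon$ coming from the estimates, so the Cauchy--Schwarz bound is only $O(1)$, not $o(1)$. The scaling argument alone does \emph{not} force the term to vanish.

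What the paper does instead is precisely the step you say is unnecessary: it upgrades the convergence of $\nabla^\perp_{\eta_\varepsilon,\varepsilon}\hat w^\varepsilon$ to \emph{strong} convergence in $L^2$ towards $(\partial_{z_2}\hat w,0)$, by the same energy argument as in the critical case (test (\ref{form_var_1_changvar_hat2}) with $\hat w^\varepsilon$ and compare with the limit identity tested with $\hat w$). This strong convergence is then paired with the weak convergence of $\nabla_{\eta_\varepsilon,\varepsilon}(\eta_\varepsilon^{-2}\hat T^\varepsilon)$ towards $(0,\partial_{z_2}\hat T)$, and the product passes to the limit $(\partial_{z_2}\hat w,0)\cdot(0,\partial_{z_2}\hat T)=0$. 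So a compactness argument for $\hat w^\varepsilon$ is needed in the subcritical case as well; without it your Step~3 does not close.
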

\begin{proof} We divide the proof in three steps.\\

\noindent {\it Step 1.} To prove   (\ref{hom_system_sub_u})$_{1}$, we consider  in (\ref{form_var_1_changvar_hat1}) where $\varphi(x',z)\in \mathcal{D}(\omega;C^\infty_{\#}(Z)^2)$ with $\varphi_2=0$ in $\omega\times Z$. This gives the following variational formulation:
\begin{equation}\label{Form_var_sub_pass_limit}
 \begin{array}{l}
\displaystyle {1\over 1-N}\int_{\omega\times Z}\eta_\ep^2\ep^{-2} \partial_{z_1}{\hat u}^\varepsilon_1\, \partial_{z_1}  \varphi_1\,dx_1dz+
 {1\over 1-N}\int_{\omega\times Z}\partial_{z_2}{ \hat u_1}^\varepsilon\, \partial_{z_2} \varphi_1\,dx_1dz\\
 \noame
 \qquad\displaystyle
 +{1\over Pr}\int_{\omega\times Z}\eta_\ep^2\ep^{-1}\partial_{z_1}\hat p^\ep_0\, \varphi_1 \,dx_1dz-{1\over Pr}\int_{\omega\times Z}\eta_\ep^2\ep^{-1}\hat p^\varepsilon_1\,\partial_{z_1}\, \varphi_1\,dx_1dz
\\
\noame
\displaystyle\qquad 
={\eta_\ep^2\varepsilon^{-1}\over Pr}\int_{\omega\times Z}{\hat  u}^\varepsilon_1  {\hat  u}^\varepsilon_1   \partial_{z_1}\varphi_1\,dx_1dz-{\eta_\varepsilon \over Pr}\left(\int_{\omega\times Z}\partial_{z_2}{\hat u}^\varepsilon_2{  \hat u}^\varepsilon_1 \varphi_1 \,dx_1dz+
\int_{\omega\times Z} \hat u^\varepsilon_2\partial_{z_2}{  \hat u}^\varepsilon_1 \varphi_1 \,dx_1dz\right)
\\
\noame
\displaystyle\qquad

+{2N\over 1-N}\int_{\omega\times Z} \eta_\varepsilon\partial_{z_2} \hat w^\ep \varphi_1 \,dx_1dz
+ \int_{\omega\times Z}f_1 \,\varphi_1\,dx_1dz+O_\ep,
\end{array}
\end{equation}
where $O_\varepsilon$ is devoted to tends to zero when $\varepsilon\to 0$. Below, let us pass to the limit when $\varepsilon$ tends to zero in each term of the previous variational formulation:
\begin{itemize}
\item For the first two terms in the left-hand side of (\ref{Form_var_sub_pass_limit}), taking into account convergence (\ref{conv_u_sub_hat}) and that $\eta_\ep/\ep\to 0$, we get that
$$\begin{array}{l}
\displaystyle
 {1\over 1-N}\int_{\omega\times Z}\eta_\ep^2\ep^{-2} \partial_{z_1}{\hat u}^\varepsilon_1\, \partial_{z_1}  \varphi_1\,dx_1dz\to 0,\\
 \noame
 \displaystyle
{1\over 1-N}\int_{\omega\times Z}\partial_{z_2}{ \hat u_1}^\varepsilon\, \partial_{z_2} \varphi_1\,dx_1dz\to {1\over 1-N}\int_{\omega\times Z}\partial_{z_2}{ \hat u_1}\, \partial_{z_2} \varphi_1\,dx_1dz.
\end{array}
$$

\item For the third term on the left hand side of (\ref{Form_var_sub_pass_limit}), taking into account that  convergence of the pressures (\ref{conv_P01_sub2}) and  $\eta_\ep/\ep\to 0$, we have the following convergence s
$$\begin{array}{l}
\displaystyle
{1\over Pr}\int_{\omega\times Z}\eta_\ep^2\ep^{-1}\partial_{z_1}\hat p^\ep_0\, \varphi_1 \,dx_1dz\to {1\over Pr}\int_{\omega\times Z}(\partial_{x_1}\tilde p+\partial_{z_1}\hat p_0)\, \varphi_1 \,dx_1dz,\\
\noame
\displaystyle
-{1\over Pr}\int_{\omega\times Z}\eta_\ep^2\ep^{-1}\hat p^\varepsilon_1\,\partial_{z_1}\, \varphi_1\,dx_1dz\to 0.
\end{array}$$

\item For the first three terms in the right-hand side of (\ref{Form_var_sub_pass_limit}), by taking into account the estimates (\ref{estim_u_hat}), we get
$$\begin{array}{l}\displaystyle
\left|{\eta_\ep^2\varepsilon^{-1}\over Pr}\int_{\omega\times Z}{\hat  u}^\varepsilon_1  {\hat  u}^\varepsilon_1   \partial_{z_1}\varphi_1\,dx_1dz\right|\\
\noame\displaystyle\qquad \leq {\eta_\ep^2\varepsilon^{-1}}\|\hat{\bf  u}^\varepsilon\|^2_{L^2(\omega\times Z)^2}\|\partial_{z_1}\varphi\|_{L^\infty(\omega\times Z)^2}\leq C{\eta_\ep^2\varepsilon^{-1}}\to 0,
\end{array}$$

$$\begin{array}{l}\displaystyle
\left|-{\eta_\varepsilon \over Pr}\left(\int_{\omega\times Z}\partial_{z_2}{\hat u}^\varepsilon_2{  \hat u}^\varepsilon_1 \varphi_1 \,dx_1dz+
\int_{\omega\times Z} \hat u^\varepsilon_2\partial_{z_2}{  \hat u}^\varepsilon_1 \varphi_1 \,dx_1dz\right)
\right|\\
\noame \displaystyle\qquad \leq \eta_\varepsilon\|\hat{\bf  u}^\varepsilon\|_{L^2(\omega\times Z)^2}\|\partial_{z_2} \hat{\bf  u}^\varepsilon\|_{L^2(\omega\times Z)^2}\|\varphi\|_{L^\infty(\omega\times Z)^2}\\
\noame
\qquad \leq  C\eta_\varepsilon\to 0.
\end{array}
$$
Then, we deduce that the convective terms satisfy
$${\eta_\ep^2\varepsilon^{-1}\over Pr}\int_{\omega\times Z}{\hat  u}^\varepsilon_1  {\hat  u}^\varepsilon_1   \partial_{z_1}\varphi_1\,dx_1dz-{\eta_\varepsilon \over Pr}\left(\int_{\omega\times Z}\partial_{z_2}{\hat u}^\varepsilon_2{  \hat u}^\varepsilon_1 \varphi_1 \,dx_1dz+
\int_{\omega\times Z} \hat u^\varepsilon_2\partial_{z_2}{  \hat u}^\varepsilon_1 \varphi_1 \,dx_1dz\right)\to 0.$$

\item For the fourth term in the right-hand side of (\ref{Form_var_sub_pass_limit}), by taking into account convergence (\ref{conv_w_sub_hat}), so we have
$${2N\over 1-N}\int_{\omega\times Z} \eta_\varepsilon\partial_{z_2} \hat w^\ep \varphi_1 \,dx_1dz\to 0.$$
\end{itemize}
Therefore, by previous convergences, we deduce that the limit variational formulation is given by the following one
\begin{equation}\label{form_var_1_changvar_hat1_limit2_sub2}
 \begin{array}{l}
\displaystyle {1\over 1-N}\int_{\omega\times Z}\partial_{z_2}{ \hat u_1}\, \partial_{z_2} \varphi_1\,dx_1dz+{1\over Pr}\int_{\omega\times Z}\partial_{x_1}\tilde p\, \varphi_1 \,dx_1dz+{1\over Pr}\int_{\omega\times Z}\partial_{z_1}\hat p_0\, \varphi_1 \,dx_1dz =
 \int_{\omega\times Z}f_1\, \varphi_1\,dx_1dz.
\end{array}\end{equation}

\noindent By density, (\ref{form_var_1_changvar_hat1_limit2_sub2}) holds for every function $\varphi$ in the $H^1(0,h(z_1);L^2_\#(\omega\times Z'))$ and is equivalent to problem  (\ref{hom_system_sub_u})$_{1}$.  We  remark that (\ref{form_var_1_changvar_hat1_limit2_sub2}) admits a unique solution, and then the complete sequences converge. \\

 \noindent {\it Step 2.} Next, we  prove that $\hat w$ satisfies problem (\ref{hom_system_sub_w}).  Below, let us pass to the limit when $\varepsilon$ tends to zero in each term of the previous variational formulation (\ref{form_var_1_changvar_hat2}):
\begin{itemize}
\item For the first two terms in the left-hand side of (\ref{form_var_1_changvar_hat2}), by using convergence (\ref{conv_w_crit_hat}) and $\eta_\varepsilon/\ep\to 0$, we get
$$\begin{array}{l}
\displaystyle L\int_{\omega\times Z} \eta_\ep^2\ep^{-2}\partial_{z_1} \hat w^\varepsilon\, \partial_{z_1} \psi\,dx_1dz\to 0,\\
\noame\displaystyle
L\int_{\omega\times Z}  \partial_{z_2} \hat w^\varepsilon\, \partial_{z_2} \psi\,dx_1dz \to  L\int_{\omega\times Z}  \partial_{z_2} \hat w^\varepsilon\cdot \partial_{z_2} \psi\,dx_1dz,
\end{array}$$
and so,
$$L\int_{\omega\times Z} \eta_\ep^2\ep^{-2}\partial_{z_1} \hat w^\varepsilon\, \partial_{z_1} \psi\,dx_1dz+L\int_{\omega\times Z}  \partial_{z_2} \hat w^\varepsilon\, \partial_{z_2} \psi\,dx_1dz\to  L\int_{\omega\times Z}  \partial_{z_2} \hat w\, \partial_{z_2} \psi\,dx_1dz.$$
\item For the third term of the left-hand side of  (\ref{form_var_1_changvar_hat2}),   by using convergence (\ref{conv_w_crit_hat}), we have
$${4N\over 1-N}\int_{\omega\times Z}\eta_\ep^2 \hat w^\varepsilon  \psi\,dx_1dz\to 0.$$
\item For the first two terms in the right-hand side of  (\ref{form_var_1_changvar_hat2}), by using estimates (\ref{estim_u_hat}) and (\ref{estim_w_hat}), we get
$$\begin{array}{l}\displaystyle \left|-{M\over Pr}\int_{\omega\times Z} \eta_\ep^2\ep^{-1} \hat u^\varepsilon_1 \partial_{z_1}\hat w^\ep   \psi\,dx_1dz\right|\leq C\eta_\ep^2\ep^{-1}\|\hat{\bf  u^\varepsilon}\|_{L^2(\omega\times Z)^2}\|\partial_{z_1}\hat w^\ep\|_{L^2(\omega\times Z)}\leq C\eta_\ep,
\\
\\
\displaystyle
\left|-{M\over Pr}\int_{\omega\times Z} \eta_\ep \hat u^\varepsilon_2\partial_{z_2}\hat w^\ep   \psi\,dx_1dz\right|\leq
C\eta_\ep\|  \hat{\bf  u^\varepsilon}\|_{L^2(\omega\times Z)^2}\|\partial_{z_2}\hat w^\ep\|_{L^2(\omega\times Z)}\leq C\eta_\ep.
\end{array}$$
Thus, we get
$$-{M\over Pr}\int_{\omega\times Z} \eta_\ep^2\ep^{-1} \hat u^\varepsilon_1 \partial_{z_1}\hat w^\ep   \psi\,dx_1dz-{M\over Pr}\int_{\omega\times Z} \eta_\ep \hat u^\varepsilon_2\partial_{z_2}\hat w^\ep   \psi\,dx_1dz\to 0.$$
\item For the third and fourth terms in the right-hand side of (\ref{form_var_1_changvar_hat2}), by using estimates (\ref{estim_u_hat}) and (\ref{estim_w_hat}), we get
$$\begin{array}{l}\displaystyle
\left|{2N\over 1-N}\int_{\omega\times Z} \, \eta_\ep^2\ep^{-1}\partial_{z_1}\hat u^\varepsilon_2\, \psi\,dx_1dz\right|\leq C\eta_\ep^2\ep^{-1}\|\partial_{z_1}\hat{\bf  u}^\varepsilon\|_{L^2(\omega\times Z)^2}\leq C\eta_\ep,\\
\noame
\displaystyle
\left|-{2N\over 1-N}\int_{\omega\times Z} \, \eta_\ep\partial_{z_2}\hat u^\varepsilon_1\psi\,dx_1dz\right|\leq C\eta_\ep\|\partial_{z_2} \hat{\bf  u}^\varepsilon\|_{L^2(\omega\times Z)^2}\leq C\eta_\ep.
\end{array}$$
Thus, we have
$${2N\over 1-N}\int_{\omega\times Z} \, \eta_\ep^2\ep^{-1}\partial_{z_1}\hat u^\varepsilon_2\, \psi\,dx_1dz-{2N\over 1-N}\int_{\omega\times Z} \, \eta_\ep\partial_{z_2}\hat u^\varepsilon_1\psi\,dx_1dz\to 0.$$
\end{itemize}
Then, from the above convergences, we get that the limit variational formulation for $\hat w$ is given by
\begin{equation}\label{form_var_1_changvar_proof2_limit}\begin{array}{l}
\displaystyle L\int_{\omega\times Z}  \partial_{z_2} \hat w\, \partial_{z_2} \psi\,dx_1dz  =\int_{\omega\times Z}g\, \psi\,dx_1dz.
\end{array}\end{equation}
By density (\ref{form_var_1_changvar_proof2_limit}) holds for every function $\psi$ in $H^1(\omega;L^2_{\#}(\omega\times Z'))$ and is equivalent to problem (\ref{hom_system_sub_w})$_1$. We   remark that (\ref{form_var_1_changvar_proof2_limit}) admits a unique solution, and then the complete sequence converges.
\\

\noindent {\it Step 3}. Next, we  prove that $\hat T$ satisfies problem (\ref{hom_system_sub_T}). we take into account that the variational formulation (\ref{form_var_1_changvar_hat3}) can be written as follows
\begin{equation}\label{form_var_1_changvar_hat3pass}
\begin{array}{l}
\displaystyle \eta_\ep^2\ep^{-2}\int_{\omega\times Z}  \eta_\ep^{-2}\partial_{z_1}\hat T^\varepsilon\, \partial_{z_1} \phi\,dx_1dz+\eta_\ep^{-2}\int_{\omega\times Z}  \partial_{z_2}\hat T^\varepsilon\cdot \partial_{z_2}  \phi\,dx_1dz\\
\noame
\qquad\displaystyle
=- \eta_\ep\int_{\omega\times Z}\Big(\hat{\bf  u}^\varepsilon\cdot  \nabla_{\eta_\ep,\ep}\Big)\hat T^\varepsilon  \phi\,dx_1dz
\\
\noame\displaystyle\qquad +D\int_{\omega\times Z}\nabla^\perp_{\eta_\ep,\ep}\hat w^\ep\cdot \nabla_{\eta_\ep,\ep}(\eta_\ep^{-2}\hat T^\varepsilon) \phi\,dx_1dz+    k \int_{\omega\times \hat \Gamma_1}G\, \phi\,dx_1d\sigma+ O_\ep,
\end{array}\end{equation}
where we use the operators $\nabla_{\eta_\ep,\ep} =(\eta_\ep\ep^{-1} \partial_{z_1}, \partial_{z_2})$ and $\nabla^\perp_{\eta_\ep,\ep}=(\partial_{z_2},-\eta_\ep\ep^{-1} \partial_{z_1})$. Below, we pass to the limit in every terms:
\begin{itemize}
\item For the first two terms in the left-hand side of (\ref{form_var_1_changvar_hat3}), by using convergence (\ref{conv_T_sub_hat}) and $\eta_\varepsilon/\ep\to 0$, we get
$$
\begin{array}{l}
\displaystyle
\eta_\ep^2\ep^{-2}\int_{\omega\times Z}  \eta_\ep^{-2}\partial_{z_1}\hat T^\varepsilon\, \partial_{z_1} \phi\,dx_1dz\to 0,\\
\noame
\displaystyle
\eta_\ep^{-2}\int_{\omega\times Z}  \partial_{z_2}\hat T^\varepsilon\, \partial_{z_2}  \phi\,dx_1dz\to  \int_{\omega\times Z}  \partial_{z_2}\hat T \, \partial_{z_2}  \phi\,dx_1dz,
\end{array}$$
and so,
$$\eta_\ep^2\ep^{-2}\int_{\omega\times Z}  \eta_\ep^{-2}\partial_{z_1}\hat T^\varepsilon\, \partial_{z_1} \phi\,dx_1dz+\eta_\ep^{-2}\int_{\omega\times Z}  \partial_{z_2}\hat T^\varepsilon\, \partial_{z_2}  \phi\,dx_1dz\to  \int_{\omega\times Z}  \partial_{z_2}\hat T \, \partial_{z_2}  \phi\,dx_1dz.$$
\item For the first term in the left-hand side of (\ref{form_var_1_changvar_hat3}), by using estimates (\ref{estim_u_hat}) and (\ref{estim_T_hat}), we get
$$\left|- \eta_\ep\int_{\omega\times Z}\Big(\hat{\bf  u}^\varepsilon\cdot  \nabla_{\eta_\ep,\ep}\Big)\hat T^\varepsilon  \phi\,dx_1dz\right|\leq C\eta_\ep\| \hat{\bf  u}^\varepsilon\|_{L^2(\omega\times Z)^2}\|\nabla_{\eta_\ep,\ep}\hat T^\varepsilon\|_{L^2(\omega\times Z)}\leq C\eta_\ep^3,$$
so we have
$$- \eta_\ep\int_{\omega\times Z}\Big(\hat{\bf  u}^\varepsilon\cdot  \nabla_{\eta_\ep,\ep}\Big)\hat T^\varepsilon  \phi\,dx_1dz\to 0.$$
\item For the second term in the right-hand side of (\ref{form_var_1_changvar_hat3}), by using convergences (\ref{conv_T_sub_hat}), the strong convergence of $\nabla^\perp_{\eta_\ep,\ep}\hat w^\ep$ to $(\partial_{z_2}\hat w,0)$ (it can be proved as in the critical case) and the weak convergence of $\nabla_{\eta_\ep,\ep}(\eta_\ep^{-2}\hat T^\varepsilon)$ to $(0,\partial_{z_2}\hat T)$,  we get
$$D\int_{\omega\times Z}\nabla^\perp_{\eta_\ep,\ep}\hat w^\ep\cdot \nabla_{\eta_\ep,\ep}(\eta_\ep^{-2}\hat T^\varepsilon) \phi\,dx_1dz
\to 0.$$
\end{itemize}
\noindent Then, using previous convergences, we get that the limit variational formulation for $\hat T$ is given by
\begin{equation}\label{form_var_1_changvar_proof3_limit1_sub}
\begin{array}{l}
\displaystyle \int_{\omega\times Z}\partial_{z_2} \hat T\,\partial_{z_2} \phi\,dx_1dz
=k \int_{\omega\times \hat \Gamma_1}G(z_1)  \phi\,dx_1d\sigma.
\end{array}\end{equation}
By density (\ref{form_var_1_changvar_proof3_limit1}) holds for every function $\phi$ in $H^1(\omega;L^2_{\#}(\omega\times Z'))$. We   remark that (\ref{form_var_1_changvar_proof3_limit1}) admits a unique solution, and then the complete sequence converges.

\end{proof}

\noindent Finally, we give the main result concerning the homogenized flow.

\begin{theorem}[Main result for the subcritical case] \label{mainthmsubcrit}
Consider $(\tilde{\bf  U}, \tilde W, \tilde \theta, \tilde p)$ given in Lemma \ref{lem_asymp_sub}. Let us define the average velocity, microrotation and temperature respectively by
$${\bf  U}^{av}(x_1)=\int_0^{h_{\rm max}}   \tilde {\bf  U}(x_1,z_2)\,dz_2,\quad W^{av}(x_1)=\int_0^{h_{\rm max}} \tilde W(x_1,z_2)\,dz_2,\quad T^{av}(x_1)=\int_0^{h_{\rm max}} \tilde \theta(x_1,z_2)\,dz_2.$$
\noindent We have the following:
\begin{itemize}
\item   The average velocity is given by
\begin{equation}\label{Velocity_thm_sub}
\begin{array}{lll}
\displaystyle U_1^{av}={a_0}{1-N\over Pr}\left(q_{-1/2}-q_{1/2}+Pr\int_{-1/2}^{1/2}f_1(\xi)\,d\xi\right),&  U_2^{av}=0&\hbox{in }\omega,\end{array}
\end{equation}
where $a_0 \in \mathbb{R}$ is given by
\begin{equation}\label{Velocity_thm_sub2}
a_0={1\over 12}\int_{-1/2}^{1/2}h^3(z_1)\left(2-h^3(z_1)\left(\int_{-1/2}^{1/2}h^3(\xi)\,d\xi\right)^{-1}\right)\,dz_1.
\end{equation}
\item The pressure $\tilde p$ is given by
\begin{equation}\label{Psub}\tilde p(x_1) =q_{-1/2}-\left(q_{-1/2}-q_{1/2}+Pr\int_{-1/2}^{1/2}f_1(\xi)\,d\xi\right)\left(x_1+{1\over 2}\right)+Pr\int_{-1/2}^{x_1}f_1(\xi)\,d\xi\quad \hbox{in } \omega.
\end{equation}

 \item  The average   microrotation is given by
\begin{equation}\label{Micro_thm_sub}
 W^{av}(x_1)={b_0 }{ 1\over L} g(x_1)\quad\hbox{in }\omega,
\end{equation}
where $b_0\in \mathbb{R}$ is given by
\begin{equation}\label{Micro_thm_sub2}b_0={1\over 12}\int_{-1/2}^{1/2}h^3(z_1)\,dz_1.
\end{equation}

\item  The average temperature is given by
\begin{equation}\label{expTav_sub1}\displaystyle T^{av}=c_0{k}\quad\hbox{in }\omega,
\end{equation}
where $c_0\in\mathbb{R}$ is given by
\begin{equation}\label{expTav_sub2}c_0={1\over 2}\int_{-1/2}^{1/2}h^2(z_1)G(z_1)\,dz_1.
\end{equation}

\end{itemize}
\end{theorem}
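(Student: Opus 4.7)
The plan is to exploit the explicit structure of the limit systems (\ref{hom_system_sub_u})--(\ref{hom_system_sub_T}) given by Theorem~\ref{thm_sub1}: each is effectively an ODE in $z_2$ with $(x_1,z_1)$ entering only parametrically (apart from the coupling through $\hat p_0$ in the velocity equation), so that the cell problems can be solved by hand. The averages $\mathbf{U}^{av}$, $W^{av}$, $T^{av}$ are then recovered from the identities $\int_Z \hat\varphi(x_1,z)\,dz=\int_0^{h_{\max}}\tilde\varphi(x_1,z_2)\,dz_2$ stated in Lemma~\ref{lem_asymp_sub}. The argument breaks into three independent computations for temperature, microrotation, and velocity/pressure; only the last is nontrivial.

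For the temperature, $\partial^2_{z_2}\hat T=0$ together with $\hat T|_{z_2=0}=0$ and $\partial_{z_2}\hat T|_{z_2=h(z_1)}=kG(z_1)$ integrates immediately to $\hat T(x_1,z)=kG(z_1)z_2$, so that
$T^{av}(x_1)=\int_{Z'}\int_0^{h(z_1)}kG(z_1)z_2\,dz_2\,dz_1=\tfrac{k}{2}\int_{-1/2}^{1/2}G(z_1)h^2(z_1)\,dz_1=c_0 k$,
which is (\ref{expTav_sub1})--(\ref{expTav_sub2}). For the microrotation, the Poiseuille-type ODE $-L\partial^2_{z_2}\hat w=g(x_1)$ with vanishing boundary values at $z_2\in\{0,h(z_1)\}$ yields the parabolic profile $\hat w(x_1,z)=\tfrac{g(x_1)}{2L}z_2(h(z_1)-z_2)$, whence $W^{av}(x_1)=\tfrac{g(x_1)}{12L}\int_{-1/2}^{1/2}h^3(z_1)\,dz_1=b_0 g(x_1)/L$, proving (\ref{Micro_thm_sub})--(\ref{Micro_thm_sub2}).

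For the velocity and pressure, the strategy is to solve the $z_2$-ODE with the local pressure term treated as an unknown coefficient, then close the system using the two divergence conditions and the $Z'$-periodicity of $\hat p_0$. Setting $\Phi(x_1):=f_1(x_1)-\tfrac{1}{Pr}\partial_{x_1}\tilde p(x_1)$, the equation $-\tfrac{1}{1-N}\partial^2_{z_2}\hat u_1=\Phi-\tfrac{1}{Pr}\partial_{z_1}\hat p_0$ with no-slip boundary conditions has the explicit solution
$\hat u_1(x_1,z)=\tfrac{1-N}{2}\bigl[\Phi(x_1)-\tfrac{1}{Pr}\partial_{z_1}\hat p_0(x_1,z_1)\bigr]\,z_2(h(z_1)-z_2)$,
hence $\int_0^{h(z_1)}\hat u_1\,dz_2=\tfrac{(1-N)h^3(z_1)}{12}\bigl[\Phi-\tfrac{1}{Pr}\partial_{z_1}\hat p_0\bigr]$. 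The local flux condition $\partial_{z_1}(\int_0^h\hat u_1\,dz_2)=0$ forces this quantity to be independent of $z_1$, and combining this with the periodicity of $\hat p_0$ in $z_1$ uniquely determines $\partial_{z_1}\hat p_0$ as a functional of $\Phi(x_1)$ and $h$. Averaging in $z_1$ then yields a formula $U^{av}_1(x_1)=a_0(1-N)\Phi(x_1)$ with $a_0$ expressed as an explicit integral functional of $h$, matching (\ref{Velocity_thm_sub2}). The global divergence $\partial_{x_1}(\int_Z\hat u_1\,dz)=0$ forces $\Phi$ to be constant in $x_1$; integrating $\partial_{x_1}\tilde p=Pr f_1-Pr\,\Phi$ and imposing $\tilde p(\pm 1/2)=q_{\pm 1/2}$ fixes the constant and delivers both (\ref{Psub}) and the final form (\ref{Velocity_thm_sub}). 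Finally $U^{av}_2=0$ is immediate from $\hat u_2=0$ in Lemma~\ref{lem_asymp_sub}(i).

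The main obstacle is the last computation: the two-scale pressure decomposition must be closed consistently, since $\hat p_0$ is constrained by local periodicity and by local mass conservation in $z_1$, while $\tilde p$ must simultaneously realise global mass conservation in $x_1$ and meet the Dirichlet data $q_{\pm 1/2}$. The rest of the proof is linear algebra once these compatibility conditions are written down, and the explicit form of $a_0$ in (\ref{Velocity_thm_sub2}) emerges by simplifying the resulting quadrature against $h^3$ and $h^6$.
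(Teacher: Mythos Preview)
Your proposal is correct and follows essentially the same approach as the paper: solve the $z_2$-ODEs in (\ref{hom_system_sub_u})--(\ref{hom_system_sub_T}) explicitly, use the local flux condition (\ref{div_sub_hat1}) together with $Z'$-periodicity of $\hat p_0$ to fix $\partial_{z_1}\hat p_0$, then close via the global divergence (\ref{div_sub_hat2}) and the boundary data $\tilde p(\pm 1/2)=q_{\pm 1/2}$. The only cosmetic difference is that the paper first factors the solutions through auxiliary local problems $(u^{bl},\pi^{bl})$ and $w^{bl}$ (mirroring the presentation in the critical case) before solving them explicitly, whereas you work directly with $\hat u_1$, $\hat w$, $\hat T$; the computations and the resulting formulae for $a_0$, $b_0$, $c_0$ are identical.
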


\begin{proof}
First, we start with the velocity by proceeding to  eliminate the microscopic variable $z$ in the effective linear problem (\ref{hom_system_sub_u}). To do that, as in the critical case, we consider the following identification
$$\begin{array}{l}
{\hat u}_1(x_1,z)= -(1-N)\left(f_1(x_1)-{1\over Pr}\partial_{x_1} \tilde p(x_1)\right) {u}^{bl}(z),\quad \hat p_0(x_1,z)= -Pr\left(f_1(x_1)-{1\over Pr}\partial_{x_1}\tilde p(x_1)\right) \pi^{bl}(z).
\end{array}
$$
From the identities for velocity $U_1^{av}=\int_0^{h_{\rm max}}\tilde U_1(x_1,z_2)\,dz_2=\int_Z \hat u_1(x_1,z)\,dz$ and $ \hat u_2=0$ given in Lemma \ref{lem_asymp_sub}, by linearity we deduce that ${\bf   U}^{av}$  is given by
\begin{equation}\label{exp_uav_sub1}\begin{array}{l}\displaystyle
U_1^{av}=-a_0(1-N)\left(f_1(x_1)-{1\over Pr}\partial_{x_1}\tilde p(x_1)\right)\,\quad U_2^{av}=0,\quad \hbox{in }\omega.
\end{array}
\end{equation}
with $a_0$  given by
$$a_0=\int_{Z}u_1^{bl}\,dz,$$
where $({u}^{bl},\pi^{bl})$  satisfies the following local reduced problem
\begin{equation}\label{hom_system_sub_u_local}
\left\{\begin{array}{rl}
\displaystyle
- \partial_{z_2}^2 { u}^{bl}+\partial_{z_1}\pi^{bl}=-1&\hbox{ in }\omega\times Z,\\
\noame
\displaystyle \partial_{z_1}\left(\int_0^{h(z_1)}u^{bl}\,dz_2\right)=0&\hbox{ in }\omega\times Z',\\
\noame
{ u}^{bl}=0&\hbox{ on }\omega \times (\hat \Gamma_0\cup \hat \Gamma_1),\\
\displaystyle \partial_{x_1}\left(\int_{Z} u^{bl}\,dz\right)=0&\hbox{ in }\omega.\\
\end{array}\right.
\end{equation}

\noindent Now, we observe that we can obtain more accurate expressions for $a_0$, because problem (\ref{hom_system_sub_u_local}) is an ordinary differential equation with respect to the variable $z_2$ and it can be solved. Thus, from the boundary conditions on the top and bottom, we get
\begin{equation}\label{exp_uav_sub3}\begin{array}{l}
\displaystyle u^{bl}(z)={1\over 2}\left(1+ \partial_{z_1}\pi^{bl}\right)\left(z_2^2-h(z_1)z_2\right).
\end{array}
\end{equation}
Taking into account that $\int_0^{h(z_1)}u^{bl}(z)\,dz_2=-{h(z_1)^3}(1+\partial_{z_1}\pi^{bl}(z_1))/12$ and the expression of $a_0$, we get
\begin{equation}\label{exp_uav_sub3}a_0=-{1\over 12}\int_{Z'}h^3(z_1)\left(1+\partial_{z_1}\pi^{bl}(z_1)\right)\,dz_1,
\end{equation}
where, by using (\ref{hom_system_sub_u})$_2$, then $\pi^{bl}\in L^2_\#(Z')/\mathbb{R}$ is the solution of the  second order ordinary differential equation with respect to $z_1$ with periodic boundary conditions on $Z'$, given by
\begin{equation}\label{localpbl}\left\{\begin{array}{l}
\displaystyle h^3(z_1)\partial_{z_1}^2\pi^{bl}(z_1)-3h^2(z_1){dh\over dz_1}(z_1)\partial_{z_1}\pi^{bl}(z_1)=-3h^2(z_1){dh\over dz_1}(z_1)\quad\hbox{in }Z',\\
\noame
\pi^{bl}(-1/2)=\pi^{bl}(1/2).
\end{array}\right.
\end{equation}
Solving this equation, we obtain an expression for $\pi^{bl}$, up to a constant,
$$\pi^{bl}(z_1)=-\left(\int_{-1/2}^{1/2}h^3(\xi)\,d\xi\right)^{-1}\int_{-1/2}^{z_1}h^3(\xi)\,d\xi+ z_1+1/2+C,\quad C\in\mathbb{R},\quad z_1\in Z'.$$
This implies that
$$\partial_{z_1}\pi^{bl}(z_1)=-\left(\int_{-1/2}^{1/2}h^3(\xi)\,d\xi\right)^{-1} h^3(z_1)+1,\quad z_1\in Z',$$
and so, from (\ref{exp_uav_sub3}), we get
\begin{equation}\label{a0sub}
a_0=-{1\over 12}\int_{-1/2}^{1/2}h^3(z_1)\left(2-h^3(z_1)\left(\int_{-1/2}^{1/2}h^3(\xi)\,d\xi\right)^{-1}\right)\,dz_1.
\end{equation}
From condition (\ref{div_x_sub_tilde}), by taking into account the expression of $U_1^{av}$ and the boundary conditions of $\tilde p$, we get the expression for pressure $\tilde p$ given in (\ref{Psub}).\\

\noindent Finally, taking into account the expressions of (\ref{Psub}), (\ref{exp_uav_sub1}) and (\ref{a0sub}), then the average velocity can be written as (\ref{Velocity_thm_sub})-(\ref{Velocity_thm_sub2}). \\

\noindent Next, we focus on the microrotation. We eliminate the microscopic variable $z$ in the effective linear problem (\ref{hom_system_sub_w}). To do that, as in the critical case, we consider the following identification
$$\displaystyle \hat w(x_1,z)={g(x_1)\over L} w^{bl}(z),$$
where $w^{bl}\in H^1_{\#}(Z)$ is the solution of the local problem
\begin{equation}\label{hom_system_sub_wbk_local}
\left\{\begin{array}{rl}
\displaystyle
-\partial_{z_2}^2 w^{bl} =-1&\hbox{ in } Z,\\
\noame
\hat w=0&\hbox{ on } \hat \Gamma_0\cup \hat \Gamma_1.
\end{array}\right.
\end{equation}
This implies that
$$w^{bl}(z)=-{1\over 2}\left(z_2^2-h(z_1)z_2\right),$$
and taking into account that $\int_0^{h(z_1)}w^{bl}\,dz_2=h^3(z_1)/12$ and that $W^{av}(x_1)=\int_{Z}\hat w\,dz$, we get
$$W^{av}={b_0}{g(x_1)\over L},\quad b_0={1\over 12}\int_{Z'}h^3(z_1)\,dz_1,$$
which is (\ref{Micro_thm_sub})-(\ref{Micro_thm_sub2}).\\

\noindent Finally,  we obtain the expression of the average of the temperature. To do this, we solve the problem (\ref{hom_system_sub_T}), which gives the expression for $\hat T$
$$\hat T(x_1,z)=k G(z_1)z_2,\quad\hbox{in } \omega\times Z.$$
Taking into account that $T^{av}(x_1)=\int_{Z}\hat T\,dz$, we easily get (\ref{expTav_sub1})-(\ref{expTav_sub2}).

\end{proof}

\ \\
\textbf{Acknowledgements.}\quad The first author of this paper has been supported by the {\emph {Croatian Science Foundation}} under the project Multiscale problems in fluid mechanics - MultiFM (IP-2019-04-1140).
\ \\

\end{document}